\documentclass{amsart}
\usepackage{amssymb,amsmath, amsthm,latexsym}
\usepackage{graphics}
\usepackage{psfrag}
\usepackage{amscd}
\usepackage{graphicx}
\newcommand{\cal}[1]{\mathcal{#1}}
\theoremstyle{plain}

\newtheorem{theo}{Theorem}

\newtheorem{lemma}{Lemma}[section]
\newtheorem{theorem}[lemma]{Theorem}
\newtheorem{proposition}[lemma]{Proposition}
\newtheorem{corollary}[lemma]{Corollary}
\theoremstyle{definition}
\newtheorem{definition}[lemma]{Definition}

\newtheorem{remark}[lemma]{Remark}

\parskip=\bigskipamount

\let\egthree=\phi
\let\phi=\varphi
\let\varphi=\egthree

\begin{document}
\title{Periodic orbits
 in the thin part of strata}
\author{Ursula Hamenst\"adt}
\thanks
{AMS subject classification: 30F30, 30F60, 37B10, 37B40}
\date{November 16, 2023}

\begin{abstract}
Let $S$ be a closed oriented surface of 
genus $g\geq 0$ with $n\geq 0$ punctures and
$3g-3+n\geq 5$.  
Let ${\cal Q}$ be a connected component 
of a stratum in the moduli space ${\cal Q}(S)$ 
of area one
meromorphic quadratic differentials on $S$ with $n$
simple poles at the punctures 
or in the moduli space ${\cal H}(S)$ 
of abelian differentials on $S$ if $n=0$. 
For a compact subset $K$ of ${\cal Q}(S)$ or of ${\cal H}(S)$, 
we show that 
the asymptotic growth rate of the number of periodic orbits for the
Teichm\"uller flow $\Phi^t$ on ${\cal Q}$ which are entirely contained in 
${\cal Q}-K$ is at least  $h({\cal Q})-1$
where $h({\cal Q})>0$ is the complex dimension of 
$\mathbb{R}^+{\cal Q}$. 

\end{abstract}

\maketitle

\section{Introduction}

For a closed oriented surface $S$ of genus
$g\geq 0$, the moduli space ${\cal Q}(S)$ 
of area one meromorphic 
\emph{quadratic differentials} with at most simple poles
which are not squares of holomorphic one-forms
decomposes into \emph{strata}. Such a stratum
is the subset of ${\cal Q}(S)$
of all quadratic 
differentials with the same number $n\geq 0$ of 
simple poles and the same number $\ell\geq 0$ of 
zeros of the same order $m_i$ $(1\leq i\leq \ell)$.
Strata need not be connected, but they
have only finitely many connected components \cite{L08}. 
A component ${\cal Q}$ of a stratum 
is a real hypersurface in a complex algebraic
orbifold of complex dimension
\[h({\cal Q})=2g-2+\ell +n.\]

Similarly, for $g\geq 2$ 
 the moduli space ${\cal H}(S)$ of 
area one \emph{abelian
differentials} on $S$ decomposes
into strata. A stratum  
is the subset of ${\cal H}(S)$ of
holomorphic one-forms with the same number
$s\geq 0$ of zeros of the same order $k_i$
$(1\leq i\leq s)$. Again, strata need not be connected,
but they have at most 3 connected components \cite{KZ03}. 
A component ${\cal Q}$ of a stratum 
is a real hypersurface in a 
complex algebraic orbifold of complex dimension
\[h({\cal Q})=2g-1+s.\] 

The \emph{Teichm\"uller flow} $\Phi^t$ acts on 
${\cal Q}(S)$ and ${\cal H}(S)$, and this action
preserves the strata. Each component of a stratum contains
periodic orbits, and these orbits can be counted: 
Namely, for a subset $A$ of ${\cal Q}(S)$ or of 
${\cal H}(S)$ and a number $R>0$, denote by 
$n_{A}(R)$ the number of period orbits in 
$A$ of length at most $R$. Then for any 
component ${\cal Q}$ of a stratum, 
we have \cite{H13}
\[n_{\cal Q}(R)\sim \frac{1}{h({\cal Q})R}e^{h({\cal Q})R}\quad
(R\to \infty)\]
which means that the ratio of the numbers on both sides of $\sim$
tend to $1$ as $R\to \infty$.

The mechanism behind this result is that the 
Teichm\"uller flow on components of strata is non-uniformly
hyperbolic in a precise sense \cite{H22}. However, 
components of strata are 
non-compact, and a major difficulty is the possibility that 
as $R\to \infty$, the number of periodic orbits of length at most $R$ 
which do not intersect some
fixed compact set $K$ grows faster than the number of  
periodic orbits of length at 
most $R$ which intersect $K$. 
That this is not the case was established 
by Eskin and Mirzakhani
\cite{EM11} and Eskin, Mirzakhani 
and Rafi \cite{EMR13} who showed that
for every $\epsilon >0$, there is a compact subset 
$K$ of ${\cal Q}$ with the property that the  growth rate
of the number of periodic orbits in ${\cal Q}$ which 
are entirely contained in ${\cal Q}-K$ is at most 
$h({\cal Q})-1+\epsilon$.

The main goal of 
this article is to establish a converse of this result. 
We are interested in periodic orbits
in components of strata which project into the thin part of 
moduli space, which ignores the possibility of periodic orbits
in components of strata which are arbitrarily close to a component 
in the boundary of the stratum, obtained by colliding zeros or
merging zeros and poles of the differentials.

\begin{theo}\label{main} Let ${\cal Q}$ be a component
of a stratum  
of area one meromorphic quadratic differentials
with $n$ poles 
on a closed surface of genus $g\geq 0$
where $3g-3+n\geq 5$ or of a stratum of
area one abelian differentials on a surface of genus $g\geq 3$. 
Then for every compact set $K\subset {\cal Q}(S)$ 
we have 
\[\lim\inf_{R\to \infty}Re^{-(h({\cal Q})-1)R}n_{\cal Q-K}(R)
>0.\]
\end{theo}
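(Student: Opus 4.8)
The plan is to establish the lower bound by exhibiting, for any prescribed compact set $K\subset {\cal Q}(S)$, an explicit family of periodic orbits of $\Phi^t$ which are entirely contained in ${\cal Q}-K$ and whose counting function is bounded below by a constant (depending on $K$) times $R^{-1}e^{(h({\cal Q})-1)R}$. These orbits will be obtained by attaching a long, thin ``neck'' along a fixed simple closed curve $c$ to periodic dynamical data coming from a stratum of one lower dimension, so that along the whole orbit the systole of the underlying flat surface stays below the thinness threshold associated with $K$.

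First I would fix a simple closed curve $c\subset S$ and a connected component ${\cal Q}_0$ of a stratum of differentials on the surface $S'$ obtained from $S$ by splitting along $c$, carrying the same zero orders as ${\cal Q}$ together with the (double pole, resp.\ regular point) data produced by degenerating a flat cylinder about $c$, chosen so that
\[
h({\cal Q}_0)=h({\cal Q})-1
\]
and so that ${\cal Q}_0$ is non-empty and of complexity large enough for the methods of \cite{H13} to apply, giving $n_{{\cal Q}_0}(R)\sim \tfrac{1}{h({\cal Q}_0)R}e^{h({\cal Q}_0)R}$. The hypothesis $3g-3+n\geq 5$ (resp.\ $g\geq 3$) is what guarantees that such a pair $(c,{\cal Q}_0)$ can be found; the verification of the dimension bookkeeping, the non-emptiness, and the treatment of the low-complexity exceptions is one place where care is needed.

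Next, for each periodic orbit $\gamma_0\subset {\cal Q}_0$ — equivalently each pseudo-Anosov map $\phi_0$ of $S'$ with invariant differential in ${\cal Q}_0$ — I would produce a pseudo-Anosov map $\phi=\Psi(\phi_0,N)$ of $S$ depending on a large integer parameter $N=N(K)$, by gluing a flat cylinder of modulus of order $N$ into the invariant flat surface of $\phi_0$ along $c$ and composing the natural extension of $\phi_0$ with a suitable mapping class supported near $c$ (a high power of the Dehn twist about $c$ is not by itself enough, since it commutes with $\phi_0$ and would yield a reducible map, so an auxiliary ingredient binding $c$ to the support of $\phi_0$ is required). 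One then has to show that for $N$ large: (i) $\Psi(\phi_0,N)$ is genuinely pseudo-Anosov with invariant differential $q_\phi$ in the \emph{fixed} stratum ${\cal Q}$ and not near its boundary; (ii) the length satisfies $\ell_\Phi(\Psi(\phi_0,N))=\ell_\Phi(\phi_0)+c_0\log N+O(1)$ with $c_0$ and the error independent of $\phi_0$; and (iii) the axis $\{\Phi^t q_\phi\}$ lies entirely in ${\cal Q}-K$, because deep inside each period the core of the glued cylinder is shorter than the thinness threshold for $K$, while during the portion of the period where $\phi_0$ acts the curve $c$, or a curve dual to it, stays short, so the systole never leaves the thin part. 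Step (iii), i.e.\ the uniform control of the flat geometry of $\Phi^tq_\phi$ along the whole compact axis together with the verification that the orbit stays in ${\cal Q}$, is the principal obstacle; this is where the non-uniform hyperbolicity of $\Phi^t$ on ${\cal Q}$ from \cite{H22} and the product-region structure of the thin part of moduli space enter decisively.

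Finally, I would check that $\phi_0\mapsto\Psi(\phi_0,N)$ is injective up to uniformly bounded multiplicity on conjugacy classes, so distinct periodic orbits of ${\cal Q}_0$ yield distinct periodic orbits of ${\cal Q}$ inside ${\cal Q}-K$. Combining this with (ii) and the asymptotics on ${\cal Q}_0$ gives, for the fixed $N=N(K)$ and all large $R$,
\[
n_{{\cal Q}-K}(R)\ \geq\ (1-o(1))\,n_{{\cal Q}_0}\big(R-c_0\log N-O(1)\big)\ \geq\ \frac{c(K)}{R}\,e^{(h({\cal Q})-1)R}
\]
with $c(K)>0$, hence $\liminf_{R\to\infty}Re^{-(h({\cal Q})-1)R}n_{{\cal Q}-K}(R)\geq c(K)>0$. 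I expect the length estimate and the final counting to be routine given \cite{H13}; the genuinely hard part is the construction of $\Psi(\phi_0,N)$ and the proof that its axis is trapped in the prescribed thin part while remaining in the fixed stratum.
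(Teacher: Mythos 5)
Your overall strategy --- lift periodic orbits from a boundary stratum into the thin part of ${\cal Q}$ by gluing in a thin cylinder --- is the right general idea and is indeed the spirit of the paper. However there is a concrete arithmetic error at the heart of the proposal that makes it fall short by one unit in the exponent, and a structural difficulty that the paper resolves in a way your sketch does not.

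First, the dimension count. You assert that the stratum ${\cal Q}_0$ obtained by degenerating along the single curve $c$ satisfies $h({\cal Q}_0)=h({\cal Q})-1$. This is false. The principal-boundary degenerations used here --- the shrinking cylinder and the shrinking half-pillowcase --- are described by four real, i.e.\ two complex, parameters attached to the limiting surface, and the paper states explicitly in Section~\ref{periodicthin} that in both Case~1 and Case~2 one has $h({\cal Q}_i)=h({\cal Q})-2$. (One can also check this directly from $h=2g-2+\ell+p_1+p_2$, resp.\ $h=2g-1+s$: a shrinking non-separating cylinder drops the genus by one with the zero data unchanged, which drops $h$ by two.) The boundary component with $h({\cal Q})-1$ that you may be thinking of is obtained by colliding zeros, but that type of degeneration does not put the surface in the thin part of moduli space and is explicitly excluded from the problem in the introduction. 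So with your ${\cal Q}_0$ the bound you write down, $n_{{\cal Q}-K}(R)\geq (1-o(1))\,n_{{\cal Q}_0}(R-c_0\log N-O(1))$, yields at best $e^{(h({\cal Q})-2)R}/R$, which is strictly weaker than the claimed estimate.

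The paper recovers the missing unit in the exponent by an additional free integer parameter that you do not exploit. In your construction the Dehn twist power near $c$ is absorbed into a fixed gluing $\Psi(\cdot,N)$ with $N=N(K)$ chosen once and for all, so it contributes nothing to the count. In the paper, by contrast, after fixing $\phi_0,\phi_2$ and the parameter $k=k(\epsilon)$, the counting varies $\phi_1$ over the boundary stratum (giving the $h({\cal Q})-2$ rate via Proposition~\ref{countone}) \emph{and simultaneously} a Dehn twist power $D^p$ about $c_1$ with $p$ ranging over an interval of size comparable to $e^{T(\phi_1)}$; all these maps $\phi_0^k\circ\phi_2\circ\phi_0^{2k}\circ D^p\circ\phi_1\circ\phi_0^k$ remain pseudo-Anosov with axis in the thin part, and the extra exponential range of $p$ is exactly what adds $1$ to the exponent. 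Without that second parameter you cannot reach $h({\cal Q})-1$.

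Two further structural points. (i) Your ``auxiliary ingredient binding $c$ to the support of $\phi_0$'' is left unspecified, whereas the paper's mechanism is explicit and is the reason it works: a train track $\tau$ in \emph{special form} carrying an elementary \emph{pair} $(c_1,c_2)$ of clean primitive vertex cycles, together with three maps $\phi_0,\phi_1,\phi_2$ supported on the pieces of $S$ cut along these curves; conjugating $\phi_1,\phi_2$ by high powers of $\phi_0$ forces the transition matrix on ${\cal V}(\tau)$ to be positive (Lemma~\ref{pA2}), which is how pseudo-Anosov-ness and the Perron--Frobenius control are obtained. A single curve does not give an analogous positivity argument. (ii) You propose to quote the unconditional asymptotics of \cite{H13} on ${\cal Q}_0$, but the paper cannot afford to do that: it needs to count only those boundary pseudo-Anosov classes that preserve the combinatorics, have $\sigma_i$ as a train track expansion, and have transition matrix with bounded entry ratios, precisely so that the glued map stays pseudo-Anosov, in the correct component, and with axis uniformly trapped in the thin part; this is the content of the tailored counting result Proposition~\ref{countone}, not of the raw asymptotics in \cite{H13}.
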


Note that the Teichm\"uller flow $\Phi^t$  on the space
${\cal Q}(1;-1)$ of area one meromorphic quadratic differentials
with a single simple pole  on a torus $T^2$
can be identified with the geodesic flow on the
unit tangent bundle of the 
modular surface $PSL(2,\mathbb{Z})\backslash {\mathbb{H}}^2$.
Thus in this case, there is a 
compact set $K$ which is intersected by every periodic orbit
for $\Phi^t$. This shows that 
a constraint on the complexity of the stratum
is necessary.

In analogy to finite volume locally symmetric manfolds
of $\mathbb{Q}$-rank at least $2$,
Theorem \ref{main} can be viewed as a witness of 
higher rank for components of strata, with a small number of
exceptions in low dimension.
Indirectly, it draws on the fact that as is the case
for locally symmetric manifolds of $\mathbb{Q}$-rank at least $2$, 
if $3g-3+n\geq 5$ then the component 
${\cal Q}$ (or its projectivization) 
admits a (partial) compactification which is
built from components of strata of smaller complexity 
\cite{MW15,BCGGM19}.

Theorem \ref{main} is not optimal. In
forthcoming work which builds on
the results in this article,
we use a flexible symbolic coding of the Teichm\"uller flow to show
that for every component ${\cal Q}$ of a stratum 
as in Theorem \ref{main} and every 
compact set $K\subset {\cal Q}(S)$,
the asymptotic growth rate of periodic orbits
for $\Phi^t$ which are contained in ${\cal Q}-K$ 
is strictly larger than 
$h({\cal Q})-1$. Periodic orbits entirely contained in 
a fixed compact set $K\subset {\cal Q}(S)$ are
counted in \cite{H10}.

The main technical 
tool for the proof of Theorem \ref{main} 
is the construction of combinatorial models for components
${\cal Q}$ of strata of area one 
abelian or quadratic differentials. These models are adapted to the 
study of the dynamics of the Teichm\"uller flow and do not use 
cylinder decompositions. They are used to investigate 
pseudo-Anosov mapping classes 
which stabilize a component $\tilde {\cal Q}$ of the
preimage of ${\cal Q}$ in the Teichm\"uller space of area one marked
abelian or quadratic differentials.  
Unit cotangent lines of axes of these mapping classes acting 
on Teichm\"uller space project 
to periodic orbits in ${\cal Q}$, and each periodic orbit can be obtained in this way.

We use this construction 
to lift periodic orbits of the Teichm\"uller flow 
from the principal boundary of ${\cal Q}$ in the sense of \cite{EMZ03}
into ${\cal Q}-K$. 
The resulting periodic orbits fellow travel the orbits 
in the principal boundary used for their
construction in a controlled way except for a subsegment 
whose length only depends on the compact set $K$.

The tools developed in this article can also be applied to 
construct orbits in a given stratum with an 
arbitrarily prescribed recursion behavior to 
compact subsets of moduli space. An example for this is
given in the following statement. For its formulation, 
for a point $X$ in the moduli space ${\cal M}(S)$ of 
hyperbolic metrics  on the surface $S$  denote by ${\rm syst}(X)$ 
the \emph{systole} of $X$, that is, the minimal length of 
a closed geodesic for the hyperbolic metric $X$.

\begin{theo}\label{thm6}
Let ${\cal Q}$ be a component of a stratum of 
area one meromorphic
(or abelian) differentials on a surface of genus $g\geq 0$
with $n\geq 0$ simple poles. If $3g-3+n\geq 5$ then 
there is a Teichm\"uller geodesic ray $\gamma:[0,\infty)\to {\cal M}(S)$
defined by a differential 
with uniquely ergodic vertical measured geodesic lamination and
such that 
\[\lim\sup_{t\to \infty}
\frac{1}{t}\log {\rm syst}(\gamma(t))<0.\] 
\end{theo}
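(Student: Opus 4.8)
The plan is to realize $\gamma$ as an infinite concatenation, carried out inside the combinatorial model for ${\cal Q}$, of longer and longer geodesic segments which shadow periodic orbits of $\Phi^t$ lying deeper and deeper in the thin part, with ``connecting'' segments of bounded combinatorial length inserted so as to control the vertical lamination. Recall first that a periodic orbit of $\Phi^t$ in a component of a stratum is the projection of the axis of a pseudo-Anosov mapping class stabilizing the corresponding component $\tilde{\cal Q}$ of the preimage in Teichm\"uller space, and that the vertical measured geodesic lamination of such an axis is arational and \emph{uniquely ergodic}. By Theorem~\ref{main} --- or, more precisely, by the lifting construction from the principal boundary of ${\cal Q}$ in the sense of \cite{EMZ03} on which it rests --- there is a fixed periodic orbit $o'$ in a component ${\cal Q}'$ of a stratum in the principal boundary together with, for each small $\epsilon>0$, a geodesic segment in ${\cal Q}$ that shadows a long subarc of $o'$ ``at scale $\epsilon$'' (along it a fixed configuration of curves has length comparable to $\epsilon$), the fellow-traveling breaking down only on a subsegment whose length is bounded in terms of $\epsilon$, growing like $-\log\epsilon$.

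Fix $\epsilon_n\downarrow 0$ and build $\gamma$ inductively. Having produced an initial geodesic segment of flow-time $S_n$ shadowing $o'$ at scale $\epsilon_n$, extend it by a segment of flow-length $L_n$ that keeps shadowing $o'$ at scale $\epsilon_n$ and then, using the flexibility of the combinatorial model, passes to shadowing $o'$ at the finer scale $\epsilon_{n+1}$; choose $L_n$ large, and arrange the transition so that the scale-$\epsilon_{n+1}$ configuration begins to be pinched \emph{before} the scale-$\epsilon_n$ configuration is released, so that at no time is the systole of the underlying surface bounded below by a fixed constant, and so that $-\log\epsilon_{n+1}\ge \epsilon\,(S_n+L_n)$ for a fixed $\epsilon>0$. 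Then $\mathrm{syst}(\gamma(t))\le e^{-\epsilon t}$ for all $t$ beyond some $S_0$, giving $\limsup_{t\to\infty}\frac1t\log\mathrm{syst}(\gamma(t))\le -\epsilon<0$. That the resulting infinite concatenation is an honest Teichm\"uller geodesic ray, not merely a uniform quasigeodesic, is where the shadowing properties of the combinatorial model enter: the fellow-traveling errors at the successive transitions are summable, so a Morse/shadowing argument for the model yields an actual ray $\gamma:[0,\infty)\to{\cal M}(S)$ tracking the concatenation.

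It remains to check that the vertical measured geodesic lamination $\mu$ of $\gamma$ is uniquely ergodic. In the combinatorial model $\mu$ is the nested intersection over $t\ge 0$ of the vertical subsets $V_t\subset {\cal PML}$ cut out by the itinerary of $\gamma|_{[0,t]}$, and $\mu$ is uniquely ergodic as soon as $\bigcap_t V_t$ is a single point. Each shadowed subarc of $o'$ contributes the strong contraction coming from the uniquely ergodic vertical lamination of the pseudo-Anosov monodromy of $o'$, so the diameters of the $V_t$ can be forced to $0$ provided these contributions are not cancelled at the transitions; since the transitions have bounded combinatorial length, this is achieved by choosing them, within the freedom the model allows, to be contracting as well --- equivalently, by verifying the Veech-type criterion that a suitable width cocycle along $\gamma$ is unbounded. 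Hence $\mu$ is arational and uniquely ergodic.

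The delicate point, and the main obstacle, is the demand that $\gamma$ be thin at \emph{every} large time with an exponential rate: one excursion forces only one configuration short on one interval, with ``thick'' endpoints, so consecutive excursions must be made to overlap, passing from shadowing $o'$ at scale $\epsilon_n$ to scale $\epsilon_{n+1}$ through flat structures that stay everywhere thin. Implementing this inside the combinatorial model while simultaneously (a) keeping the concatenation geodesic with summable fellow-traveling error and (b) preserving the unique-ergodicity criterion for the limiting vertical lamination is the technical heart of the argument; the hypothesis $3g-3+n\ge 5$ enters exactly as in Theorem~\ref{main}, ensuring that ${\cal Q}$ has a principal boundary component ${\cal Q}'$ rich enough to carry the periodic orbit $o'$ and its lifts.
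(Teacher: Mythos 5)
Your high-level strategy — interleave fixed subsurface pseudo-Anosov factors with increasingly long powers of a pinching element, arranging the pinchings to overlap so that the systole never recovers, and deduce unique ergodicity from uniform contraction — is close in spirit to the paper's proof. But the route diverges at a critical point, and the divergence introduces a genuine gap.

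The paper does not build $\gamma$ as a concatenation of geodesic segments that must then be promoted to an honest geodesic by a Morse/shadowing argument. Instead, with $\phi_0,\phi_1,\phi_2$ and $\tau$ as in the proof of Theorem~\ref{existence}, it sets $\psi_i=\phi_1\circ\phi_0^{k_i}\circ\phi_2\circ\phi_0^{k_i}$ for a rapidly growing sequence $k_i\geq 2\sum_{j<i}k_j$ and $\zeta_k=\psi_k\circ\cdots\circ\psi_1$, and shows directly that $\bigcap_k\zeta_k{\cal V}(\tau)$ is a single ray: the composite carrying maps $(\psi_{i+1}\circ\psi_i)\tau\to\tau$ map every branch onto $\tau$, hence their matrices are positive and their normalizations contract the projectivized cone ${\cal V}_0(\tau)$ by a factor independent of $i$ (a Birkhoff--Perron--Frobenius contraction). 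The single ray is then automatically a uniquely ergodic filling measured geodesic lamination $\lambda$. Pairing $\lambda$ with any $\nu$ hitting $\tau$ efficiently and filling, the Teichm\"uller geodesic is simply the $\Phi^t$-flow line through $q(\lambda,\nu)$: it is a geodesic ray by construction, not a quasigeodesic in need of upgrading. Linear escape is then read off from the intersection-number estimates of the proof of Theorem~\ref{existence}, which show that along the flow the intersection of the current short curve (an iterate of $c_1$ or $c_2$) with $\lambda$ and $\nu$ decays like $e^{-a/2}$.

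Your reliance on ``a Morse/shadowing argument for the model yields an actual ray'' is the gap. No such lemma is stated or proved in the paper, and one is not obviously available: Teichm\"uller space is not Gromov hyperbolic, and Morse-type stability of Teichm\"uller quasigeodesics in general requires extra hypotheses (such as cobounded subsurface projections) that are precisely \emph{not} satisfied here, since by design the ray escapes to infinity. By contrast, the paper's cone-intersection argument bypasses this entirely, and your invocation of a ``Veech-type width cocycle criterion'' for unique ergodicity is a vaguer substitute for the concrete uniform contraction of ${\cal V}_0(\tau)$ that the paper uses. If you replace the shadowing step by the direct construction of the vertical lamination as a nested intersection, and observe that a Teichm\"uller geodesic is then free — you just flow the differential $q(\lambda,\nu)$ — your argument collapses to the paper's, which is both shorter and actually closes.
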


The organization of the article is as follows. 
In Section 2 we collect some results from the literature
in the form needed later on.
In Section 3, we construct
combinatorial models for components of strata. 
In Section 4 we use the classification of components of 
strata by Kontsevich and Zorich \cite{KZ03} (for strata of abelian
differentials) and 
Lanneau \cite{L08} (for strata of quadratic differentials which are
not squares of holomophic one-forms)
to find for each component of a stratum with $3g-3+n\geq 5$ 
such combinatorial models which encode the
degeneration of differentials into 
suitably chosen components of the principal boundary.
Section 5 translates information
on dynamical properties of the Teichm\"uller flow on
strata into the combinatorial setup. 
This is then used in Section 6 to prove Theorem \ref{main}.

\bigskip\noindent
{\bf Acknowledgement:} A major part of this work was carried out
in spring 2010 during a special semester 
at the Hausdorff Institute for Mathematics in 
Bonn and in spring 2011 
during a visit of the MSRI in Berkeley.
I thank both institutes for their hospitality and for the 
excellent working conditions.

\section{Train tracks and geodesic laminations}\label{traintracks}

In this section we introduce 
some technical tools needed in the sequel.
We begin with summarizing some constructions from
\cite{PH92,H09}
which will be used throughout the paper.
We then 
introduce a class of train tracks 
which will serve as combinatorial models for components of strata in the later
sections, and we discuss some
of their properties. 

\subsection{Geodesic laminations}
Let $S$ be an
oriented surface of
genus $g\geq 0$ with $n\geq 0$ marked points (punctures) 
and where $3g-3+n\geq 2$.
A \emph{geodesic lamination} for a complete
hyperbolic structure on $S$ of finite volume is
a \emph{compact} subset of $S$ which is foliated into simple
geodesics.
A geodesic lamination $\lambda$ is called \emph{minimal}
if each of its half-leaves is dense in $\lambda$. Thus a simple
closed geodesic is a minimal geodesic lamination. A minimal
geodesic lamination with more than one leaf has uncountably
many leaves and is called \emph{minimal arational}.
Every geodesic lamination $\lambda$ consists of a disjoint union of
finitely many minimal components and a finite number of isolated
leaves. Each of the isolated leaves of $\lambda$ either is an
isolated closed geodesic and hence a minimal component, or it
\emph{spirals} about one or two minimal components
\cite{CEG87}.

A geodesic lamination $\lambda$ on $S$ is said to
\emph{fill up $S$} if its complementary regions 
are all topological disks or once
punctured  monogons or once punctured bigons. 
Here a once puncture monogon is a once punctured disk
with a single cusp at the boundary.
A \emph{maximal} geodesic
lamination is a geodesic lamination whose 
complementary regions are all ideal triangles
or once punctured monogons. 

\begin{definition}\label{large}
A geodesic lamination $\lambda$
is called \emph{large} if $\lambda$ fills up
$S$ and if
moreover $\lambda$ can be 
approximated in the \emph{Hausdorff topology}
by simple closed geodesics.
\end{definition}

Since every minimal geodesic lamination can be 
approximated in the Hausdorff topology by simple 
closed geodesics \cite{CEG87}, a minimal geodesic
lamination which fills up $S$ is large. However, there are large
geodesic laminations with finitely many leaves. We refer
to \cite{H09} for more detailed information.

The \emph{topological type} of a large geodesic
lamination $\nu$ is a tuple 
\[(m_1,\dots,m_\ell;-p_1,p_2)\text{ where }1\leq m_1\leq \dots \leq m_\ell,\,
\sum_{i}m_i=4g-4+p_1,\,p_1+p_2=n.\]
Here $\ell\geq 1$ is the number of complementary regions which 
are topological disks, and these disks 
are $m_i+2$-gons $(i\leq \ell)$.  
There are $p_1$ once punctured monogons and 
$p_2$ once punctured bigons. Let 
\[{\cal L\cal L}(m_1,\dots,m_\ell;-p_1,p_2)\]
be the space of all large geodesic laminations of type 
$(m_1,\dots,m_\ell;-p_1,p_2)$ equipped with the 
restriction of the Hausdorff
topology for compact subsets of $S$.

A \emph{measured geodesic lamination} is a geodesic lamination
$\lambda$ together with a translation invariant transverse
measure. Such a measure assigns a positive weight to each compact
arc in $S$ with endpoints in the complementary regions of
$\lambda$ which intersects $\lambda$ nontrivially and
transversely. The geodesic lamination $\lambda$ is called the
\emph{support} of the measured geodesic lamination; it consists of
a disjoint union of minimal components. The space ${\cal M\cal L}$
of all measured geodesic laminations on $S$ equipped with the
weak$^*$-topology is homeomorphic to $S^{6g-7+2n}\times
(0,\infty)$. Its projectivization is the space ${\cal P\cal M\cal
L}$ of all \emph{projective measured geodesic laminations}. 

The measured geodesic lamination $\mu\in {\cal
M\cal L}$ \emph{fills up $S$} if its support fills up $S$.
This support is then necessarily connected and hence minimal. Since
a minimal geodesic lamination can be approximated in the Hausdorff
topology by simple closed curves \cite{CEG87}, there exists a
tuple $(m_1,\dots,m_\ell;-p_1,p_2)$ such that the support of $\mu$ 
defines a point in the
set ${\cal L\cal L}(m_1,\dots,m_\ell;-p_1,p_2)$. 
The projectivization of a measured geodesic lamination
which fills up $S$ is also said to fill up $S$.

There
is a continuous symmetric pairing $\iota:{\cal M\cal L}\times {\cal
M\cal L}\to [0,\infty)$, the so-called \emph{intersection form},
which extends the geometric intersection number between simple
closed curves.

\subsection{Train tracks}
A \emph{train track} on $S$ is an embedded
1-complex $\tau\subset S$ whose edges
(called \emph{branches}) are smooth arcs with
well-defined tangent vectors at the endpoints. At any vertex
(called a \emph{switch}) the incident edges are mutually tangent.
Through each switch there is a path of class $C^1$
which is embedded
in $\tau$ and contains the switch in its interior. 
A simple closed curve component of $\tau$ contains
a unique bivalent switch, and all other switches are at least
trivalent.
The complementary regions of the
train track have negative Euler characteristic, which means
that they are different from disks with $0,1$ or
$2$ cusps at the boundary and different from
annuli and once-punctured disks
with no cusps at the boundary.
We always identify train
tracks which are isotopic.
Throughout we use the book \cite{PH92} as the main reference for 
train tracks. All train tracks will be  \emph{marked}, that is, we think
of a train track $\tau$ as a (coarsely well defined) 
point in the \emph{marking graph} of the subsurface of $S$ filled by $\tau$.
This subsurface is a small neighborhood of the union of 
$\tau$ with all complementary components of $\tau$ which are
topological disks or once punctured topological disks. 

A train track is called \emph{generic} if all switches are
at most trivalent. For each switch $v$ of 
a generic train track $\tau$ which is not contained in 
a simple closed curve component, there is a unique
half-branch $b$ of $\tau$ which is incident on $v$ and which is
\emph{large} at $v$. This means that every germ of an
arc of class $C^1$ on $\tau$ which passes through $v$ also
passes through the interior of $b$. 
A half-branch which is not large is called \emph{small}.
A branch $b$ of $\tau$ is
called \emph{large} (or \emph{small}) if each of its
two half-branches is large (or small). A branch which 
is neither large nor small is called \emph{mixed}.

\begin{remark} As in \cite{H09}, all train tracks
are assumed to be generic. Unfortunately this leads to 
a small inconsistency of our terminology with the
terminology found in the literature.
\end{remark}

A \emph{trainpath} on a train track $\tau$ is a
$C^1$-immersion $\rho:[k,\ell]\to \tau$ such that
for every $i< \ell-k$ the restriction of $\rho$ to 
$[k+i,k+i+1]$ is a homeomorphism onto a branch of $\tau$.
More generally, we call a $C^1$-immersion $\rho:[a,b]\to \tau$
a \emph{generalized trainpath}. 
A trainpath $\rho:[k,\ell]\to \tau$ is \emph{closed}
if $\rho(k)=\rho(\ell)$ and if either the image of $\rho$ is
a closed curve component of $\tau$ or if 
precisely one of the half-branches
$\rho[k,k+1/2],\rho[\ell-1/2,\ell]$ is large.

A generic 
train track $\tau$ is \emph{orientable} 
if there is a consistent orientation of the 
branches of $\tau$ such that 
at any switch $s$ of $\tau$, the orientation of the large
half-branch incident on $s$ extends to the orientation
of the two small half-branches incident on $s$.
If $C$ is a complementary polygon of an oriented
train track then the number of sides of $C$ is even.
In particular, a train track which contains a once
punctured monogon component
is not orientable
(see p.31 of \cite{PH92} for 
a more detailed discussion).

A train track or a geodesic lamination $\eta$ is
\emph{carried} by a train track $\tau$ if
there is a map $F:S\to S$ of class $C^1$ which is homotopic to the
identity and maps $\eta$ into $\tau$ in such a way 
that the restriction of the differential of $F$
to the tangent space of $\eta$ vanishes nowhere;
note that this makes sense since a train track has a tangent
line everywhere. We call the restriction of $F$ to
$\eta$ a \emph{carrying map} for $\eta$.
Write $\eta\prec
\tau$ if the train track $\eta$ is carried by the train track
$\tau$. Then every geodesic lamination $\nu$ which is carried
by $\eta$ is also carried by $\tau$.

A train track \emph{fills up} $S$ if its complementary
components are topological disks or once punctured 
monogons or once punctured bigons.  Note that such a train track
$\tau$ is connected.
Let $\ell\geq 1$ be the number of those complementary 
components of $\tau$ which are topological disks.
Each of these disks is an $m_i+2$-gon for some $m_i\geq 1$
$(i=1,\dots,\ell)$. The
\emph{topological type} of $\tau$ is defined to be
the ordered tuple $(m_1,\dots,m_\ell;-p_1,p_2)$ where
$1\leq m_1\leq \dots \leq m_\ell$
and $p_1$ (or $p_2$) is the number of once punctured monogons
(or once punctured bigons);
then $\sum_im_i=4g-4+p_1$ and $p_1+p_2=n$.
If $\tau$ is orientable then $p_1=0$ and $m_i$ is even 
for all $i$. 
A train track of topological type $(1,\dots,1;-p_1,0)$ is called 
\emph{maximal}. The complementary components
of a maximal train track are all trigons,
that is, topological disks with three cusps at the boundary,
or once punctured monogons.

A \emph{transverse measure} on a generic train track $\tau$ is a
nonnegative weight function $\mu$ on the branches of $\tau$
satisfying the \emph{switch condition}:
for every trivalent switch $s$ of $\tau$,  the sum of the weights
of the two small half-branches incident on $s$ 
equals the weight of the large half-branch.
Particular such transverse measures are the counting measures
of simple multicurves $c$ carried by $\tau$. Such a measure associates
to a branch $b$ the number of the preimages of an interior point of $b$
under the carrying map. The weight of every branch
with respect to this measure is integral.
In particular, the ratio of weights of 
any two branches is rational, and we call a transverse measure with this
property \emph{rational}. 
The set of rational measures is invariant under scaling, and it is dense
is the cone of all transverse measures on $\tau$.

 A \emph{subtrack} $\sigma$ of a train track $\tau$
is a subset of $\tau$ which is itself a train track.
Then $\sigma$ is obtained from $\tau$ by removing some
of the branches, and 
we write $\sigma <\tau$.
A \emph{vertex cycle} for $\tau$ is defined
to be an embedded subtrack of $\tau$ which 
either is a simple closed curve or a \emph{dumbbell},
that is, it consists of two loops
with one cusp which are connected by an embedded segment joining 
the cusps (that this definition is equivalent to the
definition defined in other works can for example be found in
\cite{Mo03}, see also \cite{H06}).
An orientable train track does not contain dumbbells.
Each vertex cycle supports a single transverse measure up to scale.

The following is well known and will be used several times in the sequel. 
We refer to \cite{Mo03} for a comprehensive discussion.

\begin{lemma}\label{polyhedron}
Let ${\cal V}(\tau)$ be the space of all transverse measures
on $\tau$.
\begin{enumerate}
\item ${\cal V}(\tau)$ 
 has the structure of a cone over a compact 
convex polyhedron
in a finite dimensional vector space. 
\item 
The vertices 
of the polyhedron are up to scaling the measures supported on
the vertex cycles. 
\item There exists a natural homeomorphism of ${\cal V}(\tau)$, 
equipped with the euclidean topology, onto the closed subspace of 
${\cal M\cal L}$ of all measured geodesic laminations carried by $\tau$.
\end{enumerate}
\end{lemma}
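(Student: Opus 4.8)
The plan is to exhibit ${\cal V}(\tau)$ as a rational polyhedral cone inside a Euclidean space, to read off its extreme rays combinatorially, and to match it with the carried measured laminations. For part (1), enumerate the branches $b_1,\dots,b_N$ of $\tau$, so that a weight function is a point of $\mathbb{R}^N$. The switch conditions are finitely many linear equations and nonnegativity of the weights is finitely many linear inequalities, so ${\cal V}(\tau)$ is the intersection of a linear subspace with the positive orthant, that is, a polyhedral cone with integral defining data. The linear functional $\mu\mapsto \|\mu\|=\sum_i\mu(b_i)$ is strictly positive on ${\cal V}(\tau)\setminus\{0\}$ because the weights are nonnegative and not all zero; hence $P=\{\mu\in{\cal V}(\tau):\|\mu\|=1\}$ is closed and bounded, therefore a compact convex (rational) polyhedron, and ${\cal V}(\tau)=\{t\mu:t\geq 0,\ \mu\in P\}$ is a cone over $P$.

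For part (2), recall that the vertices of $P$ correspond precisely to the extreme rays of ${\cal V}(\tau)$. If $\mathbb{R}^+\mu$ is an extreme ray, let $\sigma(\mu)<\tau$ be the subtrack spanned by the branches on which $\mu$ is positive; it is recurrent since it carries the positive measure $\mu$. If $\sigma(\mu)$ carried a transverse measure $\nu$ not proportional to $\mu$, then, since $\mu$ is positive on every branch of $\sigma(\mu)$ and $\nu$ vanishes off $\sigma(\mu)$, for small $\varepsilon>0$ both $\mu\pm\varepsilon\nu$ lie in ${\cal V}(\tau)$, exhibiting $\mathbb{R}^+\mu$ as non-extreme; hence $\sigma(\mu)$ carries a unique transverse measure up to scale. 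By the combinatorial classification of such subtracks (\cite{Mo03}, see also \cite{H06}), $\sigma(\mu)$ is a simple closed curve or a dumbbell, that is, a vertex cycle. Conversely, if $v<\tau$ is a vertex cycle with transverse measure $\mu_v$ (unique up to scale) and $\mu_v=\mu_1+\mu_2$ with $\mu_i\in{\cal V}(\tau)$, then $\sigma(\mu_i)\subset v$ forces $\mu_i\in\mathbb{R}^+\mu_v$, so $\mathbb{R}^+\mu_v$ is extreme. Thus the vertices of $P$ are exactly the normalized vertex-cycle measures.

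For part (3), define a map $\Psi$ from the set of measured geodesic laminations carried by $\tau$ to ${\cal V}(\tau)$ as follows: fix a carrying map for $\lambda$, choose for each branch $b_i$ a small arc $k_i$ transverse to $\tau$ meeting it only in the interior of $b_i$, and let $\Psi(\lambda)(b_i)=\iota(\lambda,k_i)$ be the total transverse mass of $\lambda$ crossing $k_i$. Transverse invariance shows this is independent of the choices, and inspection of the switch condition shows $\Psi(\lambda)\in{\cal V}(\tau)$. Conversely, every $\mu\in{\cal V}(\tau)$ is realized by a unique carried measured lamination $\lambda_\mu$ via the standard fattening construction of \cite{PH92}: the switch conditions let the weights assemble into a measured foliation of a neighborhood of $\tau$ whose tightened leaves form $\lambda_\mu$, with $\Psi(\lambda_\mu)=\mu$. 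This gives a bijection, and it is a homeomorphism for the two topologies: $\Psi$ is continuous by continuity of the intersection form $\iota$, and if $\mu_k\to\mu$ in ${\cal V}(\tau)$ then the uniformly bounded masses together with convergence of $\iota(\lambda_{\mu_k},\,\cdot\,)$ on all transversals force $\lambda_{\mu_k}\to\lambda_\mu$ in the weak$^*$-topology. Finally the image is closed in ${\cal M\cal L}$ because being carried by $\tau$ is a closed condition (\cite{PH92}).

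The genuinely substantial content, and the expected main obstacle, lies in part (3): producing the fattening construction, proving that $\mu\mapsto\lambda_\mu$ is continuous, and proving that the set of measured laminations carried by $\tau$ is closed in ${\cal M\cal L}$; together with the classification invoked in part (2) of which recurrent subtracks support a unique transverse measure up to scale. These are precisely the points at which one leans on \cite{PH92} and \cite{Mo03} rather than arguing from scratch.
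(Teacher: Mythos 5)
The paper does not provide a proof of Lemma~\ref{polyhedron}; it simply records the statement as well known and refers to \cite{Mo03} for a comprehensive discussion. Your argument is a correct and well-organized reconstruction of that standard material: the linear-algebraic description of ${\cal V}(\tau)$ as a rational polyhedral cone, the identification of extreme rays with recurrent subtracks carrying a unique measure up to scale (and hence with vertex cycles, via the classification in Mosher/Hamenst\"adt), and the fattening construction of \cite{PH92} for the homeomorphism in part (3) are all exactly the ingredients the paper is implicitly leaning on. One small point worth making explicit in part (2): when you pass to the subtrack $\sigma(\mu)$ spanned by the branches with positive weight, you should note that this subset is indeed a train track --- the switch condition guarantees that at every switch either the large branch together with at least one small branch survives or none do, and after deleting any switches that become redundant one obtains a genuine subtrack in the paper's sense. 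Similarly, in the converse direction for a dumbbell $v$, the statement "$\sigma(\mu_i)\subset v$ forces $\mu_i\in\mathbb{R}^+\mu_v$" rests on the observation that $v$ has no proper nonempty subtrack supporting a transverse measure (removing the connecting arc leaves switches with only small half-branches, which is not a valid train track); this is true but deserves a sentence. With those minor amplifications the proof is complete and matches the approach the paper delegates to its references.
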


The train track is called
\emph{recurrent} if it admits a transverse measure which is
positive on every branch. We call such a transverse measure $\mu$
\emph{positive}, and we write $\mu>0$ (see \cite{PH92} for 
more details).

If $b$ is a small branch of $\tau$ which is incident on two
distinct switches of $\tau$ then
the graph $\sigma$
obtained from $\tau$ by removing $b$ is a subtrack of $\tau$.
We then call $\tau$ a 
\emph{simple extension} of $\sigma$.
Note that formally to obtain the subtrack $\sigma $ from $\tau-b$
we may have to delete the switches on which the branch $b$ is incident. 

\begin{lemma} \label{simpleex}
\begin{enumerate}
\item
A simple extension $\tau$ of 
a recurrent non-orientable connected train track $\sigma$
is recurrent. Moreover,
\[{\rm dim}{\cal V}(\tau)={\rm dim}{\cal V}(\sigma)+1.\]
\item An orientable simple extension $\tau$ of a recurrent orientable 
connected train track $\sigma$
is recurrent. Moreover,
\[{\rm dim}{\cal V}(\tau)={\rm dim}{\cal V}(\sigma)+1.\]
\end{enumerate}
\end{lemma}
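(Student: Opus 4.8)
The plan is to reduce both assertions to the single claim that $\tau$ carries a transverse measure positive on the new branch $b$, and then to establish that claim by producing a closed trainpath through $b$. First set $\sigma'=\tau-b$; since $b$ is small at both of the distinct switches $s_1,s_2$ on which it is incident, $\sigma'$ is nothing but $\sigma$ with one or two interior points $s_1,s_2$ marked on its branches, so $\sigma'$ is connected, recurrent, and non-orientable in case (1) (orientable in case (2)) exactly because $\sigma$ is, while $\tau$ has one more branch and the same number of switches as $\sigma'$, so that (number of branches)$-$(number of switches) goes up by one from $\sigma$ to $\tau$. Now suppose a transverse measure $\mu$ on $\tau$ with $\mu(b)>0$ has been found. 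A positive transverse measure $\mu_0$ on $\sigma$ (available since $\sigma$ is recurrent) extends to a transverse measure $\hat\mu_0$ on $\tau$ with $\hat\mu_0(b)=0$, by giving each of the two subbranches of a subdivided branch the $\mu_0$-weight of that branch; then $\hat\mu_0$ is positive on every branch of $\tau$ except $b$, so $\hat\mu_0+\varepsilon\mu>0$ for small $\varepsilon>0$ and $\tau$ is recurrent. For the dimension, the same un-collapsing identifies the linear space of real solutions of the switch conditions of $\sigma$ with the hyperplane $\{w(b)=0\}$ in the analogous space for $\tau$; since $w\mapsto w(b)$ is not identically zero there (it is positive on $\mu$), the $\tau$-space has one more dimension than the $\sigma$-space, and because both $\sigma$ and $\tau$ are recurrent these dimensions equal $\dim{\cal V}(\sigma)$ and $\dim{\cal V}(\tau)$ respectively. (Alternatively one may quote the standard equalities $\dim{\cal V}=(\text{branches})-(\text{switches})+1$ for a connected recurrent orientable track and $(\text{branches})-(\text{switches})$ in the non-orientable case.)

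Thus it remains to build a closed trainpath on $\tau$ through $b$, whose counting measure is then a transverse measure with weight at least one on $b$. Since $b$ is small at both $s_1$ and $s_2$, any trainpath meeting $b$ reads $\dots X_1\,b\,X_2\dots$, where $X_i$ is the branch carrying the large half-branch at $s_i$; so the task is to find a trainpath in $\sigma'$ that begins with a legal continuation of $X_2$ and whose last branch continues legally into $X_1$ — equivalently, a trainpath in $\sigma$ that traverses the branch $d_2$ subdivided at $s_2$ in a prescribed direction and, later, the branch $d_1$ subdivided at $s_1$ in a prescribed direction. In case (2) I would fix an orientation of $\tau$; the switch conditions then become flow conservation for $\tau$ viewed as a directed graph, transverse measures are exactly the non-negative circulations, and the positive circulation given by recurrence of $\sigma$ shows, via its decomposition into directed cycles, that every branch of $\sigma$ (hence of $\sigma'$) lies on a directed cycle; a connected directed graph in which every edge lies on a directed cycle is strongly connected, so the required directed trainpath through $b$ can be closed up. In case (1) the non-orientability of $\sigma$ enters essentially — which is why the two cases are stated apart: non-orientability supplies a trainpath running from some oriented branch to its own reverse, and splicing such an orientation-reversing trainpath with trainpaths furnished by recurrence of $\sigma$ joins the two prescribed oriented branches $d_2$ and $d_1$, closing up a trainpath through $b$.

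I expect the main obstacle to be exactly this last point in the non-orientable case: converting the hypothesis that $\sigma$ is connected, recurrent and non-orientable into enough connectivity of the trainpaths of $\sigma$ to guarantee one that runs through $d_2$ and then through $d_1$ with the prescribed orientations, with careful bookkeeping of the orientations along the spliced trainpath. In the orientable case the analogous input is the clean fact that a connected recurrent oriented train track is strongly connected as a directed graph, so there the argument is routine once the reduction above is in place.
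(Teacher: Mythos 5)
Your proposal follows the same strategy as the paper for recurrence: produce a closed trainpath in $\tau$ through $b$, take its counting measure, and add it to a positive transverse measure inherited from $\sigma$. The paper phrases the trainpath step slightly differently (it finds a trainpath $\rho_0$ from $s_1$ to $s_2$ exiting and entering through the small half-branches, extends it to a closed $\rho$, and then replaces $\rho_0$ by $b$), but the complementary arc $\rho-\rho_0$ is exactly the path from $s_2$ to $s_1$ through the large half-branches that you construct directly, so the two constructions coincide up to presentation. Where you genuinely improve on the paper is the dimension count: the paper introduces a linear ``replace $b$ by the counting measure of $\rho_0$'' map and computes its corank, while you simply identify the real solution space of the switch conditions of $\sigma$ with the hyperplane $\{w(b)=0\}$ inside that of $\tau$ and observe that $w\mapsto w(b)$ is nonzero there; this is more transparent and avoids any reference to the trainpath in the linear algebra. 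You also supply an actual argument in the orientable case (a positive circulation decomposes into directed cycles, every edge lies on one, hence the directed graph on branches is strongly connected), whereas the paper merely declares that part (2) ``follows in exactly the same way.'' Finally, the non-orientable connectivity claim you flag as the main obstacle is precisely the point the paper's own proof leaves unjustified, asserting in a single sentence that the extension to a closed trainpath ``is possible since $\sigma$ is non-orientable, connected and recurrent''; so you have correctly identified both the structure of the argument and its one soft spot.
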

\begin{proof}
If $\tau$ is a simple extension of a connected
train track $\sigma$ then $\sigma$ can be
obtained from $\tau$ by the removal of a small branch $b$ which
is incident on two distinct switches
$s_1,s_2$. Then $s_i$ is an interior point of a branch
$b_i$ of $\sigma$ $(i=1,2)$. 

If $\sigma$ is moreover non-orientable and recurrent then there is 
a trainpath $\rho_0:[0,t]\to 
\tau-b$ which begins at $s_1$, ends at $s_2$ and such that
the half-branch $\rho_0[0,1/2]$ is small at $s_1=\rho_0(0)$ and  
that the half-branch $\rho_0[t-1/2,t]$ is small at $s_2=\rho_0(t)$. 
Extend $\rho_0$ to a closed trainpath $\rho$ on $\tau -b$ 
which begins and ends at $s_1$. This is possible since
$\sigma$ is non-orientable, connected and recurrent.
There is a closed trainpath $\rho^\prime:[0,u]\to \tau$ 
which can be obtained from $\rho$ by replacing the trainpath $\rho_0$ by
the branch $b$ traveled through
from $s_1$ to $s_2$. The counting measure of $\rho^\prime$ on $\tau$ 
satisfies the switch condition and hence it
defines a transverse measure on $\tau$ which is positive on $b$.
On the other hand, every transverse measure on $\sigma$
defines a transverse measure on $\tau$. Thus 
since $\sigma$ is recurrent and since the 
sum of two transverse measures on $\tau$ is 
again a transverse
measure,  the train track $\tau$ is recurrent as well.
Moreover, we have 
${\rm dim}{\cal V}(\tau)\geq {\rm dim}{\cal V}(\sigma)+1$.

Let
$k$ be the number of branches of $\tau$.
Label the branches of $\tau$ 
with the numbers $\{1,\dots,k\}$ so that the number $k$ is 
assigned to $b$. 
Let $e_1,\dots,e_k$ be the standard basis of $\mathbb{R}^k$ and
define a linear
map $A:\mathbb{R}^k\to \mathbb{R}^k$ by 
$A(e_i)=e_i$ for $i\leq k-1$ and 
$A(e_k)=\sum_i\nu(i)e_i$ where
$\nu$ is the weight function on $\{1,\dots,k-1\}$ defined by the
trainpath $\rho_0$. The map $A$ is a surjection onto a 
linear subspace of $\mathbb{R}^k$ of codimension one, moreover
$A$ preserves the linear subspace $V$ of $\mathbb{R}^k$ defined
by the switch conditions for $\tau$. In particular,
the corank of $A(V)$ in $V$ is at most one.
But $A(V)$ is contained in the space of 
solutions of the switch conditions on $\sigma$ 
and consequently its corank in $V$ is at least one. 
Thus equality holds. 

To summarize, we obtain that indeed,
${\rm dim}{\cal V}(\tau)=
{\rm dim}{\cal V}(\sigma)+1$ which
completes the proof of the first part of 
the lemma. The second part follows in exactly the same way, 
and its proof will be omitted.
\end{proof}

As a consequence we obtain

\begin{corollary}\label{dimensioncount}
\begin{enumerate}
\item
${\rm dim}{\cal V}(\tau)= 2g-2+\ell+p_1+p_2$
for every non-orientable recurrent
train track $\tau$ of topological type $(m_1,\dots,m_\ell;-p_1,p_2)$.
\item ${\rm dim}{\cal V}(\tau)= 2g-1+\ell+p_2$ for every orientable
recurrent train track $\tau$ of topological type $(m_1,\dots,m_\ell;0,p_2)$.
\end{enumerate}
\end{corollary}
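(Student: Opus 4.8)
The plan is to turn this into a count of branches and switches together with a rank computation for the switch conditions, using recurrence only at the very end. By Lemma~\ref{polyhedron} the space $\mathcal{V}(\tau)$ is a cone over a compact convex polyhedron, so its dimension is the dimension of its linear span. Let $b$ be the number of branches of $\tau$ and let $M$ be the linear system on $\mathbb{R}^{b}$ cut out by the switch conditions, so that $\mathcal{V}(\tau)=\{x\in\mathbb{R}^{b}_{\ge 0}:Mx=0\}$. Since $\tau$ is recurrent this cone contains a point with all coordinates positive, and therefore its linear span is all of $\ker M$; hence $\dim\mathcal{V}(\tau)=b-\operatorname{rank}(M)$, and it remains to compute $b$ and $\operatorname{rank}(M)$.

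For the cell count: as $\tau$ fills $S$ it is connected, and since $3g-3+n\ge 2$ it is not a simple closed curve, so it has no bivalent switch and, being generic, every switch is trivalent; writing $v$ for the number of switches this gives $2b=3v$. Each trivalent switch gives rise to exactly one cusp of a complementary region, coming from its two small half-branches, and every cusp arises this way, so the number of cusps equals $v$. Summing cusps over the complementary regions of a train track of type $(m_1,\dots,m_\ell;-p_1,p_2)$ yields $v=\sum_i(m_i+2)+p_1+2p_2$, which by $\sum_i m_i=4g-4+p_1$ simplifies to $v=2(2g-2+\ell+p_1+p_2)$, hence $b=\tfrac{3}{2}v=3(2g-2+\ell+p_1+p_2)$.

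For the rank: $\operatorname{rank}(M)=v-\dim(\text{left kernel of }M)$, and a vector $c=(c_s)$ in the left kernel is exactly a linear relation among the switch equations. A branch occurs in the equations at its two endpoints with coefficient $+1$ or $-1$ according as its half-branch there is large or small, so $c$ satisfies $\epsilon_s c_s+\epsilon_{s'}c_{s'}=0$ for every branch joining switches $s,s'$ with half-branch signs $\epsilon_s,\epsilon_{s'}\in\{\pm1\}$ (a loop branch only forces some $c_s$ to vanish, which is consistent with what follows). In particular $|c_s|=|c_{s'}|$ along every branch, so by connectedness $|c_s|$ is a constant $\kappa$. If $\kappa\ne0$, set $\eta_s=c_s/\kappa\in\{\pm1\}$; then the relation reads $\eta_{s'}=-\epsilon_s\epsilon_{s'}\eta_s$, which is precisely the statement that the rule ``a half-branch incident on $s$, with sign $\epsilon$, points into $s$ if and only if $\epsilon\eta_s=1$'' orients each branch consistently and extends the orientation of every large half-branch to the incident small ones, i.e.\ defines a coherent orientation of $\tau$. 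Conversely an orientation of $\tau$ produces such an $\eta$, unique up to a global sign by connectedness. Hence the left kernel is trivial when $\tau$ is non-orientable and one-dimensional when $\tau$ is orientable, so $\operatorname{rank}(M)=v$ in the first case and $\operatorname{rank}(M)=v-1$ in the second; substituting the values of $b$ and $v$ (with $p_1=0$ in the orientable case) gives $\dim\mathcal{V}(\tau)=b-v=2g-2+\ell+p_1+p_2$ and $\dim\mathcal{V}(\tau)=b-v+1=2g-1+\ell+p_2$, respectively.

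The step I expect to require the most care is the sign bookkeeping in the last paragraph: fixing the precise large/small conventions, checking that the $\mathbb{Z}/2$-labeling $\eta$ genuinely satisfies the definition of orientability (the loop-branch case, and switches on a once punctured monogon, which is automatically consistent with non-orientability), and confirming that recurrence is exactly what licenses $\dim\mathcal{V}(\tau)=b-\operatorname{rank}(M)$. An alternative route, closer to the phrase ``as a consequence'' of Lemma~\ref{simpleex}, is to exhibit $\tau$ inside a maximal recurrent train track of the same orientability type through a chain of simple extensions that subdivide the complementary polygons into triangles (resp.\ squares) and the once punctured bigons into once punctured monogons (resp.\ leave them untouched): each extension raises $\dim\mathcal{V}$ by one by Lemma~\ref{simpleex}, and comparing with the value $\dim\mathcal{V}=6g-6+2n$ for maximal recurrent train tracks yields the same formulas after the same elementary count.
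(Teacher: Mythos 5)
Your proof is correct, and it takes a genuinely different route from the paper's. The paper deduces the formulas from Lemma~\ref{simpleex} by inductively subdividing the complementary regions of $\tau$ down to a maximal recurrent train track, for which the dimension $6g-6+2n$ is known; in the orientable case it first breaks orientability by adding one branch $b_0$, then adds a second branch $b_1$ in a compatible position so that the two added branches carry equal weight, and reads off the dimension drop from the non-orientable count. You instead compute $\dim\mathcal{V}(\tau)=\dim\ker M$ directly (recurrence gives an interior point, so the cone spans the kernel), count branches and switches from the cusp count of the complementary regions and the trivalence relation $2b=3v$, and determine $\operatorname{rank}(M)$ by analyzing the left kernel, i.e.\ linear relations among the switch equations. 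Your observation that such a relation is exactly a choice of a $\{\pm1\}$-valued function $\eta$ on switches satisfying $\eta_{s'}=-\epsilon_s\epsilon_{s'}\eta_s$, which is the same data as a coherent orientation of $\tau$ (unique up to a global sign by connectedness), cleanly explains where the extra $+1$ in the orientable case comes from. This is more self-contained than the paper's argument and makes the role of orientability transparent; the paper's approach is shorter once Lemma~\ref{simpleex} is in hand but hides the orientability dichotomy inside the construction of the auxiliary branches $b_0,b_1$. Two small points worth flagging explicitly in a careful write-up: a loop branch at a trivalent switch $s$ necessarily has both half-branches small (otherwise it would bound a cuspless disk, which is excluded), so the loop contributes the equation $-2c_s=0$ and hence $c_s=0$, consistent with such a $\tau$ being non-orientable; and the connectedness of $\tau$, which you use both to conclude $|c_s|$ is constant and to get uniqueness of the orientation up to global sign, follows from the standing hypothesis that $\tau$ fills $S$, which is built into the notion of topological type.
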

\begin{proof}
The complementary components of a 
non-orientable recurrent train track $\tau$ of topological type $(m_1,\dots,m_\ell;-p_1,p_2)$ 
can be  subdivided
in $4g-4+p_1-\ell$ steps into trigons by successively adding small
branches. The once punctured bigon components can
be subdivided into a trigon and a once punctured monogon. 
A repeated application of the first part of 
Lemma \ref{simpleex} shows
that the resulting train track $\eta$ is maximal and
recurrent. Since 
for every maximal recurrent train track $\eta$ on a surface with $n=p_1+p_2$ 
punctures we have
${\rm dim}{\cal V}(\eta)=6g-6+2n$ (see \cite{PH92}), 
the first part of the corollary
follows from the formula in the first part of Lemma \ref{simpleex}.

To show the second part of the corollary, 
let $\tau$ be an orientable recurrent train track
of type $(m_1,\dots,m_\ell;0,p_2)$. Then $m_i$ is even
for all $i$.
Add a branch $b_0$ to $\tau$ which
cuts some complementary component of $\tau$ into a trigon
and a second polygon with an odd number of sides. The resulting
train track $\eta_0$ is not recurrent since a trainpath on $\eta_0$
can pass through $b_0$ at most once. However, we can add 
to $\eta_0$ another small branch $b_1$ which cuts some complementary
component of $\eta_0$ with at least 4 sides into a trigon and a second 
polygon such that the resulting
train track $\eta$ is non-orientable
and recurrent. The inward pointing tangent of $b_1$
is chosen in such a way that there is a trainpath traveling
through both $b_0$ and $b_1$. The counting measure of any simple 
closed curve which is carried by $\eta$ gives equal weight to the branches
$b_0$ and $b_1$. But this just means that ${\rm dim}{\cal V}(\eta)=
{\rm dim}{\cal V}(\tau)+1$ (see the proof of Lemma \ref{simpleex}
for a detailed argument). 
By the first part of the corollary,
we have ${\rm dim}{\cal V}(\eta)=2g-2+\ell +p_2+2$ and
consequently ${\rm dim}{\cal V}(\tau)=2g-1+\ell+p_2$ as claimed.
\end{proof}

\begin{definition}\label{deffullyrec} 
A train track $\tau$ of topological type $(m_1,\dots,m_\ell;-p_1,p_2)$
which carries
a minimal large geodesic lamination 
$\nu\in {\cal L\cal L}(m_1,\dots,m_\ell;-p_1,p_2)$
is called \emph{fully recurrent}.
\end{definition}

\begin{remark}\label{fullnecessary}
It is not hard to construct train tracks which are 
recurrent but not fully recurrent. Since this fact is 
not important for what follows we do not give explicit examples
here.

If a train track $\eta$ is carried by a train track $\tau$, then the
identity of $S$ induces a map from the set of complementary
components of $\tau$ into the set of complementary components 
of $\eta$. Thus up to homotopy, the complementary components
of $\tau$ are obtained from the complementary components of $\eta$
by subdivision. In particular, the number of complementary 
components of $\tau$ is not smaller than the number of 
complementary components of $\eta$, and 
if $\nu\in {\cal L\cal L}(m_1,\dots,m_\ell;-p_1,p_2)$ is carried by a
train track $\tau$ of topological type $(m_1,\dots,m_\ell;-p_1,p_2)$, 
then a carrying map $\nu\to \tau$ is surjective.
\end{remark}
 
Note that by definition, a fully recurrent train track is connected
and fills up $S$. Since a minimal geodesic lamination
supports a transverse measure, 
a fully recurrent train track $\tau$ is recurrent. 

%

There are two simple ways to modify a fully recurrent
train track $\tau$
to another fully recurrent train track.
Namely, if $b$ is a mixed branch of $\tau$ 
then we can \emph{shift} $\tau$
along $b$ to a new train track $\tau^\prime$. 
This new train track carries $\tau$ and hence it 
is fully recurrent since it carries 
every geodesic lamination
which is carried by $\tau$ \cite{PH92,H09}.

Similarly, 
if $e$ is a large branch of $\tau$ then we can perform a
right or left \emph{split} of $\tau$ at $e$
as shown in Figure A. 
\begin{figure}[ht]
\begin{center}
\psfrag{Figure A}{Figure C} 
\includegraphics[width=0.8\textwidth]{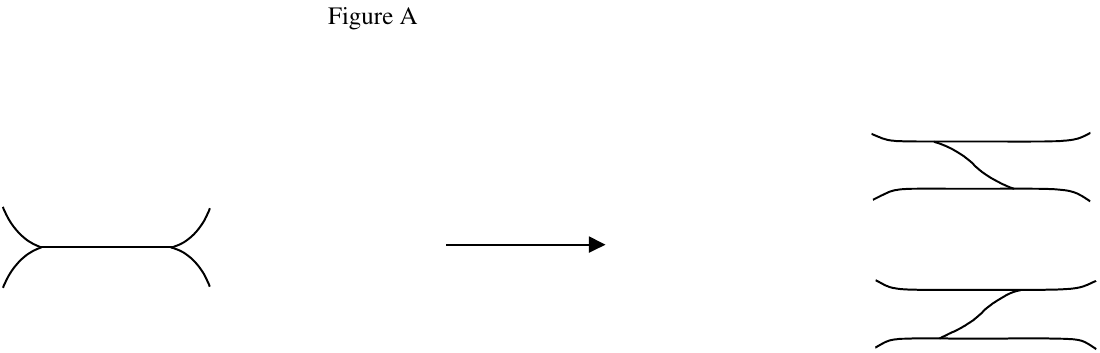}
\end{center}
\end{figure}
The new small branch in the split
track is called the \emph{diagonal} of the split.
A (right or left) split $\tau^\prime$ of a 
train track $\tau$ is carried
by $\tau$. 
If $\tau$ is of topological type 
$(m_1,\dots,m_\ell;-p_1,p_2)$, 
if $\nu\in {\cal L\cal L}(m_1,\dots,m_\ell;-p_1,p_2)$
is carried by $\tau$ and if $e$ is a large branch
of $\tau$, then there is a unique choice of a right or
left split of $\tau$ at $e$ such that the split track $\eta$ 
carries $\nu$. In particular, $\eta$ is fully recurrent. 
Note however that 
there may be a split of $\tau$ at $e$ such that
the split track is not fully recurrent any more
(see Section 2 of \cite{H09} for details).

To each train track $\tau$ 
which fills up $S$ one can
associate a \emph{dual bigon track} $\tau^*$ 
(Section 3.4 of \cite{PH92}).
There is a bijection between
the complementary components of $\tau$ and those
complementary components of $\tau^*$ which are
not \emph{bigons}, i.e. disks with two cusps at the
boundary. This bijection maps
a complementary component $C$ of $\tau$ which is an $n$-gon for some
$n\geq 3$ to an $n$-gon component of 
$\tau^*$ contained in $C$, and it maps a once punctured
monogon or bigon $C$ to a once punctured monogon 
or bigon contained in $C$.
If $\tau$ is orientable then the orientation of $S$ and
an orientation of $\tau$ induce an orientation on 
$\tau^*$, that is, $\tau^*$ is orientable.

There is a notion of carrying for bigon tracks which 
is analogous to the notion of carrying for train tracks.
Measured geodesic laminations which are carried by 
the bigon track $\tau^*$ 
can be described as follows.
A \emph{tangential measure} on a 
train track $\tau$ of type $(m_1,\dots,m_\ell;-p_1,p_2)$ 
assigns to a branch $b$ of $\tau$
a weight $\mu(b)\geq 0$ such that for every
complementary $k$-gon of $\tau$ or once punctured bigon 
with consecutive sides
$c_1,\dots,c_k$ and total mass $\mu(c_i)$ (counted
with multiplicities) the following holds true.
\begin{enumerate}
\item $\mu(c_i)\leq \mu(c_{i-1})+
\mu(c_{i+1})$.
\item $\sum_{i=j}^{k+j-1}(-1)^{i-j}\mu(c_i)\geq 0$, $j=1,\dots,k$.
\end{enumerate}
The complementary once punctured monogons 
define no constraint on tangential measures.
Our definition of tangential measure on $\tau$ is stronger than the
definition given on p.22 of \cite{PH92} and corresponds to
the notion of a \emph{metric} as defined 
on p.184 of \cite{P88}. We do not use this terminology here since
we find it misleading. 

The space of
all tangential measures on $\tau$ has the structure
of a convex cone in a finite dimensional real vector space.
By Lemma 2.1 of \cite{P88},
every tangential measure on $\tau$ determines a simplex of
measured geodesic laminations which
\emph{hit $\tau$ efficiently}. 
The dimension of this simplex equals the number of
complementary components of $\tau$ with an even number of sides.
The supports of these
measured geodesic laminations 
are carried by the bigon track $\tau^*$,
and every measured geodesic
lamination which is carried by $\tau^*$ can be obtained in this way.
The train track $\tau$ is called \emph{transversely recurrent}
if it admits a tangential measure which is positive on
every branch.

In general, 
a measured geodesic lamination
$\nu$ which hits $\tau$ efficiently does not determine 
uniquely a tangential measure on $\tau$ either.
Namely, let
$s$ be a switch of $\tau$ and let $a,b,c$ be the 
half-branches of $\tau$ incident on $s$ and such that
the half-branch $a$ is large. If $\beta$ is a tangential
measure on $\tau$ and if $\nu$ is a measured geodesic lamination
 in the simplex determined by $\beta$ then it may
be possible to drag the switch $s$ across some of 
the leaves of $\nu$ and modify the tangential
measure $\beta$ on $\tau$ to a tangential measure
$\mu\not=\beta$. Then $\beta-\mu$ is a multiple of 
a vector of the form $\delta_a-\delta_b-\delta_c$ where
$\delta_w$ denotes the function on the
branches of $\tau$ defined by $\delta_w(w)=1$ and
$\delta_w(a)=0$ for $a\not=w$.

\begin{definition}\label{largett}
Let $\tau$ be a train track of topological type 
$(m_1,\dots,m_\ell;-p_1,p_2)$. 
\begin{enumerate}
\item $\tau$  
is called \emph{fully transversely recurrent}
if its dual bigon track  
$\tau^*$ carries a minimal large geodesic lamination
$\nu\in {\cal L\cal L}(m_1,\dots,m_\ell;-p_1,p_2)$. 
\item 
$\tau$ is called \emph{large} if
$\tau$ is fully recurrent and fully transversely recurrent.
\end{enumerate}
\end{definition}

For a large train track $\tau$ let 
${\cal V}^*(\tau)\subset {\cal M\cal L}$ 
be the set of all measured geodesic
laminations whose support is carried by $\tau^*$. Each of 
these measured geodesic 
laminations corresponds to a family of tangential measures on 
$\tau$.  With this identification,
the pairing
\begin{equation}\label{intersectionpairing}
(\nu,\mu)\in {\cal V}(\tau)\times 
{\cal V}^*(\tau)\to \sum_b\nu(b)\mu(b)
\end{equation}
is just the restriction of the intersection form
on measured lamination space
(Section 3.4 of \cite{PH92}).
Moreover, 
${\cal V}^*(\tau)$ is naturally homeomorphic to 
a convex cone in a real vector space. The dimension of this cone
coincides with the dimension of ${\cal V}(\tau)$.



From now on we 
denote by ${\cal L\cal T}(m_1,\dots,m_\ell;-p_1,p_2)$ the set of all
isotopy classes of large train tracks on $S$ of type
$(m_1,\dots,m_\ell;-p_1,p_2)$.

\section{Combinatorial models for components of strata}\label{strata}

The goal of this section is to relate large train tracks to components
of strata of abelian or quadratic differentials. 

For a closed oriented surface 
$S_{g,n}$ of genus $g\geq 0$ with $n\geq 0$ marked points 
(punctures) let $\tilde{\cal Q}(S_{g,n})$ be the
bundle of \emph{marked} area one holomorphic
quadratic differentials with either a  simple pole or a regular
point at 
each of the marked points and no other pole
over the \emph{Teichm\"uller space} ${\cal T}(S_{g,n})$ 
of marked complex structures
on $S_{g,n}$.

Fix a complete hyperbolic metric on $S_{g,n}$ of
finite area. A quadratic differential 
$q\in \tilde{\cal Q}(S_{g,n})$ is 
determined by a pair $(\lambda^+,\lambda^-)$ of 
measured geodesic laminations 
which \emph{bind} $S$, which means that  
we have $\iota(\lambda^+,\mu)+
\iota(\lambda^-,\mu)>0$ for every measured geodesic
lamination $\mu$.
The \emph{vertical} measured geodesic 
lamination $\lambda^+$ for $q$
corresponds to the equivalence class of the 
\emph{vertical measured
foliation} of $q$. 
The \emph{horizontal} measured geodesic lamination
$\lambda^-$ for $q$ corresponds to the equivalence
class of the \emph{horizontal measured foliation} of $q$.
These foliations are the pull-back of the foliation of $\mathbb{C}$
into straight lines parallel to the imaginary or real axis, respectively,
by a system of charts on the complement of the singular points
of $q$ for which $q$ takes the form $dz^2$ (or $dz$ if $z$ is a holomorphic
one-form).

For $p_1\leq n$, $p_2=n-p_1$ and $\ell\geq 1$, 
an $\ell$-tuple $(m_1,\dots,m_\ell)$ of positive integers 
$1\leq m_1\leq \dots \leq m_\ell$ with $\sum_im_i=4g-4+p_1$
defines a \emph{stratum} $\tilde{\cal Q}(m_1,\dots,m_\ell;-p_1,p_2)$ 
in $\tilde{\cal Q}(S_{g,n})$. This stratum consists 
of all marked 
quadratic differentials with $p_1$ simple poles, $p_2$ regular marked points and
$\ell$ zeros of 
order $m_1,\dots,m_\ell$. We require that
these differentials are not squares of holomorphic
one-forms. 
The stratum is a 
complex manifold of dimension 
\begin{equation}\label{h}
h=2g-2+\ell+p_1+p_2.\end{equation}
In general, such a stratum is not connected, 
but most strata have only finitely many connected components
\cite{CS21,H21}. These components are permuted by the
mapping class group ${\rm Mod}(S_{g,n})$ of $S_{g,n}$. 

The closure in $\tilde{\cal Q}(S_{g,n})$ of 
a stratum is a union of components of strata. 
As strata are invariant
under the action of the mapping class group
${\rm Mod}(S_{g,n})$ of $S_{g,n}$, 
they project to strata in the moduli space
${\cal Q}(S_{g,n})=\tilde{\cal Q}(S_{g,n})/{\rm Mod}(S_{g,n})$ 
of quadratic differentials on $S_{g,n}$. 
Denote by
${\cal Q}(m_1,\dots,m_\ell;-p_1,p_2)$ the projection of the
stratum 
$\tilde {\cal Q}(m_1,\dots,m_\ell;-p_1,p_2)$. The strata in moduli
space need not be connected, but their connected
components have been identified by 
Lanneau \cite{L08}. A stratum in ${\cal Q}(S_{g,n})$ has
at most two connected components. The number of components of 
the stratum ${\cal Q}(m_1,\dots,m_\ell;-p_1,p_2)$ equals the number of 
components of ${\cal Q}(m_1,\dots,m_\ell;-p_1,0)$.

Similarly, 
let $\tilde{\cal H}(S_{g,n})$ be the bundle of marked 
holomorphic one-forms over Teichm\"uller space
${\cal T}(S_{g,n})$ of $S_{g,n}$. Each of the 
marked points of $S_{g,n}$ is required to 
be a regular marked point for the differential. In particular, the bundle
is non-empty only if $g\geq 1$.  
For an $\ell$-tuple $k_1\leq \dots \leq k_\ell$
of positive integers with $\sum_ik_i=2g-2$, the stratum 
$\tilde{\cal H}(k_1,\dots,k_\ell;n)$ of marked holomorphic one-forms
on $S$ with $\ell$ zeros of order $k_i$ $(i=1,\dots,\ell)$
and $n$ regular marked points 
is a complex manifold of dimension
\begin{equation}\label{h2} h=2g-1+\ell+n.\end{equation} 
It projects to a stratum 
${\cal H}(k_1,\dots,k_\ell;n)$ in the moduli space
${\cal H}(S_{g,n})$ of area one holomorphic one-forms on $S_{g,n}$. 
Strata of holomorphic one-forms in moduli space
need not be connected, 
but the number
of connected components of a stratum is at most three
\cite{KZ03}.

We continue to use the assumptions
and notations from Section \ref{traintracks}. 
For a marked large train track
$\tau\in {\cal L\cal T}(m_1,\dots,m_\ell;-p_1,p_2)$ 
let \[{\cal Q}(\tau)\subset \tilde{\cal Q}(S_{g,n})\] be 
the set of all marked quadratic differentials 
whose horizontal measured geodesic lamination 
is contained in ${\cal V}(\tau)$ via the identification of 
${\cal V}(\tau)$ with a (not necessarily open)
cone in ${\cal M\cal L}$ 
and whose vertical
measured geodesic lamination is carried by the dual
bigon track $\tau^*$ of $\tau$. Since $\tau$ and $\tau^*$ both
carry a minimal large geodesic lamination, and such a lamination
supports a transverse measure and fills $S=S_{g,n}$, for a large
train track $\tau$ on
$S=S_{g,n}$ the set ${\cal Q}(\tau)$ is not empty.
Recall that no geodesic lamination can be carried
by both $\tau$ and $\tau^*$.

%

Given two measured laminations $(\mu,\nu)$ which bind $S$, 
it is in general not easy to determine the stratum of 
the quadratic or abelian differential $z$ determined 
by $(\mu,\nu)$ due to possibility of horizontal or vertical 
\emph{saddle connections}. Such a saddle connection is 
a geodesic segment for the singular euclidean metric defined by $z$ 
which connects two singular points of $z$ 
(here we exclude a regular marked point) and does not contain a singular
point in its interior. 
The next lemma shows that train tracks can to used to this end.

\begin{lemma}\label{containedinstratum}
\begin{enumerate}
\item 
Let $\tau\in {\cal L\cal T}(m_1,\dots,m_\ell;-p_1,p_2)$ be non-orientable and
let $q\in {\cal Q}(\tau)$. If the support of the horizontal measured
geodesic lamination of $q$ is contained in
${\cal L\cal L}(m_1,\dots,m_\ell;-p_1,p_2)$ then
$q\in \tilde {\cal Q}(m_1,\dots,m_\ell;-p_1,p_2)$.
\item Let $\tau\in {\cal L\cal T}(m_1,\dots,m_\ell;0,p_2)$ be orientable and
let $q\in {\cal Q}(\tau)$. If the support of the horizontal measured
geodesic lamination of $q$ is contained in
${\cal L\cal L}(m_1,\dots,m_\ell;0,p_2)$ then
$q\in \tilde {\cal H}(m_1/2,\dots,m_\ell/2;p_2)$.
\end{enumerate}
\end{lemma}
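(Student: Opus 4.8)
The plan is to recover the singularity data of $q$ — the orders of its zeros and the distribution of simple poles and regular marked points over the punctures — from the combinatorics of the two measured geodesic laminations determining $q$, using the hypothesis to pin down the complementary region structure of the horizontal lamination and the two carrying conditions to rule out the saddle connections that could otherwise hide zeros. Concretely, write $q\in{\cal Q}(\tau)$ as the differential determined by its binding pair $(\lambda^+,\lambda^-)$, where by definition of ${\cal Q}(\tau)$ the horizontal lamination $\lambda^-$ is carried by $\tau$ and the vertical lamination $\lambda^+$ is carried by the dual bigon track $\tau^*$; put $\nu^-=\operatorname{supp}(\lambda^-)$ and $\nu^+=\operatorname{supp}(\lambda^+)$, so that by assumption $\nu^-\in{\cal L\cal L}(m_1,\dots,m_\ell;-p_1,p_2)$ and $\nu^-$, filling up $S$, is minimal. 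The next input is the standard dictionary between the singular flat structure of $q$ and its measured foliations: straightening the horizontal foliation yields exactly $\nu^-$, and — provided the horizontal foliation has no saddle connections — the complementary regions of $\nu^-$ are in type-preserving bijection with the singular points of $q$, an $(m+2)$-gon corresponding to a zero of order $m$, a once-punctured monogon to a simple pole and a once-punctured bigon to a regular marked point; likewise for $\nu^+$ and the vertical foliation. Granting for the moment the absence of horizontal saddle connections, the hypothesis $\nu^-\in{\cal L\cal L}(m_1,\dots,m_\ell;-p_1,p_2)$ then at once forces $q$ to have $\ell$ zeros of orders $m_1,\dots,m_\ell$, with $p_1$ of the punctures simple poles and the remaining $p_2$ regular; that is, $q$ carries exactly the singularity data encoded in the topological type of $\tau$.

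The heart of the argument — and the step I expect to be the main obstacle — is to exclude horizontal saddle connections, and for this both carrying conditions are needed. Since $\nu^-$ has the same topological type as $\tau$, a carrying map $\nu^-\to\tau$ is surjective (Remark~\ref{fullnecessary}) and hence induces a type-preserving bijection between the complementary regions of $\nu^-$ and those of $\tau$; dually, the non-bigon complementary regions of $\tau^*$ are in type-preserving bijection with those of $\tau$, so $\nu^+$, being carried by $\tau^*$, has at most $\ell$ complementary disk regions. A horizontal saddle connection joining two zeros $z_1,z_2$ of $q$ would collapse the complementary region of $\nu^-$ containing $z_1$ and $z_2$ into a single polygon while leaving the regions of $\nu^+$ at $z_1$ and $z_2$ separate — unless a vertical saddle connection also joins $z_1$ and $z_2$: in the first case $\nu^+$ would have strictly more complementary disk regions in that portion of $S$ than $\tau^*$, contradicting that the complementary regions of $\tau^*$ refine those of $\nu^+$; in the second case one argues directly from the flat structure near $z_1\cup z_2$ that the corresponding complementary region of $\nu^-$ cannot be carried by a single complementary region of $\tau$ consistently with $\lambda^+$ hitting $\tau$ efficiently. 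Making this dichotomy rigorous, in particular treating coexisting horizontal and vertical saddle connections, is where the real work lies.

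It remains to settle the orientability alternative. In case (1) the lamination $\nu^-$ is non-orientable: if $p_1\geq1$ it has a once-punctured monogon and if some $m_i$ is odd it has an odd-sided complementary polygon, while in the remaining case the surjectivity of the carrying map $\nu^-\to\tau$ transfers the non-orientability of $\tau$ to $\nu^-$; hence the horizontal foliation of $q$ is non-orientable, $q$ is not the square of a holomorphic one-form, and therefore $q\in\tilde{\cal Q}(m_1,\dots,m_\ell;-p_1,p_2)$. In case (2) an orientation of $\tau$ induces an orientation of $\nu^-$ (which is carried by $\tau$) and, via the orientation it induces on $\tau^*$, an orientation of $\nu^+$, so both foliations of $q$ are orientable and $q=\omega^2$ for a holomorphic one-form $\omega$; since here every $m_i$ is even and $p_1=0$, a zero of order $m_i$ of $q$ is a zero of order $m_i/2$ of $\omega$ and the $p_2$ punctures remain regular for $\omega$, whence $q\in\tilde{\cal H}(m_1/2,\dots,m_\ell/2;p_2)$.
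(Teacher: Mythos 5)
Your proposal starts from the same dictionary as the paper (complementary regions of the horizontal lamination $\leftrightarrow$ singular points of $q$, assuming no horizontal saddle connections; subdivision of the zero orders $\leftrightarrow$ presence of horizontal saddle connections), but the step where the two proofs diverge — excluding horizontal saddle connections — is handled in a genuinely different way, and yours contains a gap that you yourself flag.

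The paper does \emph{not} try to rule out saddle connections for $q$ directly. Instead it fixes the horizontal lamination $\mu={\rm supp}(\lambda^-)$ (minimal, filling, of the prescribed type) and approximates the vertical lamination by rational points $\beta_j\in{\cal V}^*(\tau)$ — simple closed multicurves carried by $\tau^*$. For the approximating differential $q_j=q(\mu,\beta_j)$ the exclusion of horizontal saddle connections is essentially automatic: a multicurve carried by $\tau^*$ is homotopic to a union of closed trainpaths on $\tau^*$, and such a trainpath can only cross a complementary component $C$ of $\tau$ in an arc joining \emph{adjacent} sides of $C$ (this is the basic structural fact about dual bigon tracks from Section 3.4 of \cite{PH92}). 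Since a horizontal saddle connection in a differential with horizontal lamination $\mu$ would force the vertical measure to charge arcs joining \emph{non-adjacent} sides of the corresponding complementary polygon of $\mu$, one gets a contradiction after lifting to the universal cover. Then $q_j\to q$, and because the closure of $\tilde{\cal Q}(m_1,\dots,m_\ell;-p_1,p_2)$ consists of the stratum together with strata obtained by colliding singular points, while $q$ (by the hypothesis on $\mu$) has at least $\ell$ zeros of at least the orders $m_1,\dots,m_\ell$ and $p_1$ poles, the limit $q$ cannot sit in any boundary stratum, so $q$ lies in the stratum itself. The approximation sidesteps entirely the need to analyse the interaction of horizontal with vertical saddle connections.

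Your proposed direct argument has a real gap. Case A (horizontal saddle connection with no accompanying vertical one) is plausible: a rough count of non-bigon complementary regions shows $\nu^+$ would need strictly more than $\ell+p_1+p_2$ such regions, contradicting the carrying $\nu^+\prec\tau^*$. But Case B — a horizontal \emph{and} a vertical saddle connection joining the same pair of zeros — is where the counting argument collapses, since then both $\nu^-$ and $\nu^+$ still have exactly $\ell+p_1+p_2$ complementary regions, and your sentence ``one argues directly from the flat structure near $z_1\cup z_2$'' is not an argument. You acknowledge this (``Making this dichotomy rigorous \dots\ is where the real work lies''); the missing step is the crux of the lemma and is not a detail one can wave away. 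A second, smaller imprecision: the phrase ``the complementary regions of $\tau^*$ refine those of $\nu^+$'' needs the discussion of bigon regions of $\tau^*$ (which merge into non-bigon regions of $\nu^+$) made explicit before the count you want goes through. If you want to salvage your route you would need a genuinely new idea for Case B — say, an angle count at $z_1,z_2$ combined with the efficiency of $\lambda^+$ against $\tau$ — but as written the proof is incomplete.
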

\begin{proof} A marked quadratic differential 
$z\in \tilde {\cal Q}(S_{g,n})$ 
defines a singular euclidean metric on $S_{g,n}$.
A singular point for $z$ 
is a zero or a pole or a marked regular point. 
A \emph{separatrix} is a maximal geodesic segment or ray 
which begins at a singular point and does not contain 
a singular point in its interior. 

The complex structure on $S_{g,n}$ underlying $z$ determines a complete finite
area hyperbolic metric $h$ on $S_{g,n}$ with cusps at the $p_1$ 
marked points appearing in the definition. 
Let $\xi$ be the support of the horizontal measured geodesic lamination
of the quadratic differential $z$, realized in the hyperbolic metric $h$.
By \cite{L83}, the geodesic lamination  $\xi$
can be obtained from the horizontal foliation of $z$ by
cutting $S_{g,n}$ open
along each horizontal separatrix and straightening the remaining 
leaves so that they become geodesics for $h$.
In particular, up to homotopy, a horizontal saddle connection $s$ of $z$  is
contained in the interior of a complementary component $C$ of $\xi$
which is uniquely determined by $s$.

Let $\tau\in {\cal L\cal T}(m_1,\dots,m_\ell;-p_1,p_2)$ 
be non-orientable. Let
$q\in {\cal Q}(\tau)$ and assume that the 
support ${\rm supp}(\mu)$ of the horizontal
measured geodesic lamination 
$\mu\in {\cal V}(\tau)$ of $q$ is 
contained in  
${\cal L\cal L}(m_1,\dots,m_\ell;-p_1,p_2)$.
Then ${\rm supp}(\mu)$ is non-orientable
since otherwise $\tau$ inherits an orientation from ${\rm supp}(\mu)$
via a carrying map ${\rm supp}(\mu)\to \tau$. 
Since ${\rm supp}(\mu)\in 
{\cal L\cal L}(m_1,\dots,m_\ell;-p_1,p_2)$,
the orders of the zeros of the quadratic differential $q$ are obtained from
the orders $m_1,\dots,m_\ell$ by subdivision. 
Moreover, $q\in \tilde {\cal Q}(m_1,\dots,m_\ell;-p_1,p_2)$ if and 
only if this subdivision is trivial, which is equivalent to stating that 
$q$ does not have any horizontal saddle connection.

To show that this is indeed the case note first that 
the closure in $\tilde {\cal Q}(S_{g,n})$ 
of the stratum $\tilde {\cal Q}(m_1,\dots,m_\ell;-p_1,p_2)$ 
 is the union of 
$\tilde {\cal Q}(m_1,\dots,m_\ell;-p_1,p_2)$
with components of strata obtained by colliding some singular points.
Thus 
it suffices to find a sequence
$q_j$ of marked quadratic differentials which are contained in the closure
of $\tilde {\cal Q}(m_1,\dots,m_\ell;-p_1,p_2)$ and such that 
$q_j\to q$. 

For the construction of such a sequence, 
let $\beta_j\in {\cal V}^*(\tau)$ be a sequence of 
rational points, that is, measured geodesic
laminations supported on simple closed
multicurves, so that $\beta_j$ converges 
as $j\to \infty$ to the
vertical measured lamination of $q$. Such a sequence 
exists since rational points are dense in 
${\cal V}^*(\tau)$. 
As $\mu$ is minimal and fills $S_{g,n}$,
for all $j$ the pair
$(\mu,\beta_j)$ binds $S_{g,n}$ 
(since the only measured laminations on $S_{g,n}$ whose intersection 
with $\mu$ vanish have the same support as $\mu$)
and hence defines a quadratic differential 
$q_j\in \tilde{\cal Q}(S_{g,n})$ with $q_j\to q$. 
Our goal is to show that $q_j$ is contained in the closure
of $\tilde {\cal Q}(m_1,\dots,m_\ell;-p_1,p_2)$ and hence
in $\tilde {\cal Q}(m_1,\dots,m_\ell;-p_1,p_2)$ 
by the choice of $\mu$.

Consider for the moment an arbitrary quadratic differential 
$u$ with horizontal 
measured geodesic lamination $\mu$ which admits a horizontal saddle 
connection $\alpha$ connecting 
the zeros $x_1,x_2$. The weight deposited
on $\alpha$ 
by the transverse measure of 
the vertical measured geodesic lamination of $u$ is 
positive. 
By Remark \ref{fullnecessary} and the discussion in the beginning of this proof, 
there exists a homotopy of $S_{g,n}$ which maps $\mu$ onto $\tau$ and
which maps $x_1,x_2$ into a (uniquely determined) complementary
component $C$ of $\tau$. The component 
$C$ has at least four sides, and if $D$ is the complementary component
of $\mu$ which corresponds to $C$, 
then there exists a pair of non-adjacent sides $a,b$ of $D$ corresponding to 
a pair of non-adjacent sides of $C$ such that the transverse measure
of the vertical measured geodesic lamination of $u$
gives positive mass to geodesic
lines whose intersection components
with $D$ have endpoints on the sides $a,b$. 

We use this observation as follows.
For each $j$,
the simple closed multicurve $\beta_j$ can be homotoped to
a collection of closed trainpaths on the dual bigon track
$\tau^*$ of $\tau$. These paths 
intersect $\tau$ transversely in interior points of 
branches. If $C$ is any component of 
$S_{g,n}-\tau$, then any component of the intersection of 
$\beta_j$ with 
$C$ has its endpoints on consecutive sides of $C$
(see Section 3.4 of \cite{PH92} for details on this fact).

The geodesic lamination ${\rm supp}(\mu)$ lifts 
to a geodesic lamination $\hat \mu$ 
in the hyperbolic plane $\mathbb{H}^2$ which is the universal covering
of $S_{g,n}$, equipped with a complete finite volume hyperbolic  metric.
The lift of $D$ is a $\pi_1(S_{g,n})$-invariant 
collection of ideal polygons in $\mathbb{H}^2$.
Trainpaths on $\tau^*$ lift to uniform
quasi-geodesic in $\mathbb{H}^2$ 
which uniformly fellow-travel their geodesic representatives. 
Thus if $\hat D\subset \mathbb{H}^2$ 
is a component of the preimage of $D$, then the geodesic representatives
of  the lifts of the trainpaths corresponding to a component
of the multi-curve $\beta_j$ do not 
contain any subarc crossing through $\hat D$, with endpoints on non-adjacent 
sides of $\hat D$.  
Together with 
the discussion in the previous paragraph, we conclude 
that $q_j$  does not have a horizontal saddle connection and hence
it is contained in the closure of
$\tilde {\cal Q}(m_1,\dots,m_\ell;-p_1,p_2)$ which is what we wanted to show.

This yields the first part of the lemma, and the second part follows
with precisely the same argument. 
\end{proof}

We use Lemma \ref{containedinstratum} to show

\begin{proposition}\label{structure}
\begin{enumerate}
\item 
Let $\tau\in {\cal L\cal T}(m_1,\dots,m_\ell;-p_1,p_2)$ be
a large non-orien\-ta\-ble train track. Then 
there is a component $\tilde{\cal Q}$ of 
$\tilde{\cal Q}(m_1,\dots,m_\ell;-p_1,p_2)$ such that
${\cal Q}(\tau)$ 
is the closure 
in $\tilde{\cal Q}(S_{g,n})$  of 
an open path connected subset of $\tilde{\cal Q}$.
\item For every large orientable train track 
$\tau\in {\cal L\cal T}(m_1,\dots,m_\ell;0,n)$ 
there is a component $\tilde{\cal Q}$ of 
$\tilde{\cal H}(m_1/2,\dots,m_\ell/2,n)$
such that ${\cal Q}(\tau)$ is the closure in $\tilde {\cal H}(S_{g,n})$ of
an open path connected subset of $\tilde{\cal Q}$.
\end{enumerate}
\end{proposition}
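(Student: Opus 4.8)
The plan is to build on Lemma \ref{containedinstratum} in two stages: first show that the set of $q\in{\cal Q}(\tau)$ whose horizontal lamination has support of the top topological type $(m_1,\dots,m_\ell;-p_1,p_2)$ is a nonempty open subset of ${\cal Q}(\tau)$ lying inside a single stratum, and second show that this subset is connected and that ${\cal Q}(\tau)$ is its closure. For the first stage, let ${\cal Q}_0(\tau)\subset{\cal Q}(\tau)$ consist of those $q$ whose horizontal measured geodesic lamination $\mu$ has $\mathrm{supp}(\mu)\in{\cal L\cal L}(m_1,\dots,m_\ell;-p_1,p_2)$. By Definition \ref{deffullyrec} and the fact that $\tau$ is large, such $\mu$ exist, and since $\tau^*$ carries a minimal large lamination which supports a transverse measure and binds $S$, one gets a nonempty supply of pairs $(\mu,\nu)$ with $\mu\in{\cal V}(\tau)$ of full type and $\nu$ carried by $\tau^*$; hence ${\cal Q}_0(\tau)\neq\emptyset$. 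Lemma \ref{containedinstratum} gives ${\cal Q}_0(\tau)\subset\tilde{\cal Q}(m_1,\dots,m_\ell;-p_1,p_2)$ (resp.\ $\subset\tilde{\cal H}(m_1/2,\dots,m_\ell/2;p_2)$ in the orientable case). The key point is then that ${\cal Q}_0(\tau)$ is \emph{open}: a quadratic differential $q$ with no horizontal saddle connections whose horizontal lamination is carried by $\tau$ with full support and whose vertical lamination is carried by $\tau^*$ is a point where the period coordinates vary continuously, and the conditions ``carried by $\tau$'' / ``carried by $\tau^*$'' are open by Lemma \ref{polyhedron}(3) together with the openness of the carrying relation for laminations hitting $\tau$ efficiently; absence of horizontal saddle connections is exactly the condition defining the stratum, which is open in its closure.

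For the second stage — connectedness — I would use that ${\cal V}(\tau)$ is (the cone over) a compact convex polyhedron (Lemma \ref{polyhedron}(1)) and ${\cal V}^*(\tau)$ is a convex cone, and that the map $(\mu,\nu)\mapsto q$ sending a binding pair to its quadratic differential is continuous and open on its domain of definition. The subset of ${\cal V}(\tau)$ of laminations of full topological type is the complement of finitely many proper faces (those where some complementary polygon gets subdivided, i.e.\ where a measure vanishes on a branch forcing a collision of zeros), hence is connected — indeed path connected, as the complement in a convex set of a finite union of positive-codimension convex subsets. Similarly, the relevant subset of ${\cal V}^*(\tau)$ (vertical laminations giving no vertical saddle connection through a complementary region of $\tau$) is the complement in a convex cone of finitely many positive-codimension subsets, again path connected. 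Taking the product and mapping down gives a path connected subset of $\tilde{\cal Q}$; since any two points of ${\cal Q}_0(\tau)$ can be joined inside ${\cal Q}_0(\tau)$ this way, ${\cal Q}_0(\tau)$ is path connected, hence contained in a single component $\tilde{\cal Q}$ of the stratum. Finally, ${\cal Q}(\tau)$ is the closure of ${\cal Q}_0(\tau)$ in $\tilde{\cal Q}(S_{g,n})$: any $q\in{\cal Q}(\tau)$ has horizontal lamination $\mu\in{\cal V}(\tau)$ of some (possibly subdivided) type, and by perturbing $\mu$ within ${\cal V}(\tau)$ to a nearby lamination of full type (moving into the interior of the polyhedron, which one can do since the full-type locus is dense) and perturbing the vertical lamination within ${\cal V}^*(\tau)$ if needed, one produces a sequence in ${\cal Q}_0(\tau)$ converging to $q$; conversely ${\cal Q}(\tau)$ is closed by definition, so it equals $\overline{{\cal Q}_0(\tau)}$.

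The main obstacle I anticipate is the openness of ${\cal Q}_0(\tau)$ inside ${\cal Q}(\tau)$ — more precisely, verifying that the ``no horizontal saddle connection'' condition persists under small perturbations of the pair $(\mu,\nu)$ that keep $\mu$ of full type and $\nu$ carried by $\tau^*$, and dually that no \emph{vertical} saddle connection is created. The quasi-geodesic fellow-traveling argument in the proof of Lemma \ref{containedinstratum} controls the horizontal side for the specific sequence built there, but for an open-neighbourhood statement one needs it to be stable: this should follow because carrying a minimal filling lamination is an open condition and because the combinatorial position of $\mathrm{supp}(\mu)$ relative to $\tau$ (hence the complementary-region structure, hence the stratum) is locally constant on the full-type locus. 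I would also need to be slightly careful with the orientable case, where the relevant target is a component of $\tilde{\cal H}$ and one must check, as in Lemma \ref{containedinstratum}(2) and Corollary \ref{dimensioncount}(2), that the parity constraints $m_i$ even are automatically satisfied and that the dimension counts in Lemma \ref{polyhedron} and Corollary \ref{dimensioncount} match $h({\cal Q})$ from \eqref{h} and \eqref{h2}, so that ${\cal Q}_0(\tau)$ is genuinely open in the stratum and not merely contained in it.
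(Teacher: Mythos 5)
Your overall strategy — combine Lemma~\ref{containedinstratum} with density of full‑type laminations in ${\cal V}(\tau)$ and ${\cal V}^*(\tau)$ — is the right starting point, and matches the paper in spirit. But there is a genuine gap in the way you set up the open set, and a second gap in the connectedness argument.

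The set ${\cal Q}_0(\tau)$ you define, consisting of $q\in{\cal Q}(\tau)$ whose \emph{horizontal} lamination has support of full topological type, is \emph{not} open, and cannot be made open: rational transverse measures (supported on multicurves) are dense in ${\cal V}(\tau)$, so every $q\in{\cal Q}_0(\tau)$ is a limit of differentials with non‑full‑type horizontal lamination. The condition ``${\rm supp}(\mu)$ is a minimal filling lamination of type $(m_1,\dots,m_\ell;-p_1,p_2)$'' is dense but only of Baire second category, never open. You flag this as your main obstacle, but the resolution you propose (appealing to openness of the carrying relation and to period coordinates) addresses a different, weaker condition — carrying and absence of saddle connections — and does not salvage openness of ${\cal Q}_0(\tau)$. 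The paper instead never tries to make the full‑type locus open: it shows that ${\cal Q}_0(\tau)$ (in your notation) is merely \emph{dense} in ${\cal Q}(\tau)$, and then observes that since $\tilde{\cal Q}(m_1,\dots,m_\ell;-p_1,p_2)$ is locally closed in $\tilde{\cal Q}(S_{g,n})$, the larger set $\tilde{\cal Q}(m_1,\dots,m_\ell;-p_1,p_2)\cap{\cal Q}(\tau)$ is automatically open and dense in ${\cal Q}(\tau)$. That is the open set whose closure is ${\cal Q}(\tau)$; your ${\cal Q}_0(\tau)$ is a proper dense subset of it.

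The connectedness argument is also unsound. You assert that the full‑type locus in ${\cal V}(\tau)$ is ``the complement of finitely many proper faces'', but this is not true: a measured lamination $\mu$ may assign positive weight to every branch of $\tau$ (hence lie in the interior of the polyhedron, away from all faces) and still have support with extra saddle connections or be supported on a multicurve, so its topological type may be strictly finer than $(m_1,\dots,m_\ell;-p_1,p_2)$. Being of full type is a condition on the support structure, not on which branches carry mass; only the implication ``full type $\Rightarrow$ interior'' holds. Consequently the convex‑complement argument does not produce the required path‑connectedness. The paper sidesteps this by a bridging argument: starting from an arbitrary $(\alpha,\beta)$ in the stratum intersection, perturb $\alpha$ to a full‑type $\mu$ while staying in the stratum (using openness from the previous step), then use the key observation that once the \emph{horizontal} lamination $\mu$ is of full type, Lemma~\ref{containedinstratum} guarantees the whole slice $\{(\mu,\xi):\xi\in{\cal V}^*(\tau)\}$ stays in the stratum, so the vertical coordinate can be slid freely to some fixed full‑type $\nu$; then by symmetry the horizontal coordinate can be slid freely to $\mu'$; and so on. This chaining of stable/unstable slices is what delivers path‑connectedness, and it is not a reformulation of a convexity argument on products.

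Finally, your dimension/parity remarks about the orientable case are reasonable to keep in mind, but they are not where the argument lives; the paper explicitly treats the two cases identically.
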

\begin{proof} In the proof of the proposition, we do not
distinguish between the orientable and the non-orientable case.

Let $\tau\in {\cal L\cal L}(m_1,\dots,m_\ell;-p_1,p_2)$ and
let $\mu\in {\cal V}(\tau)$ be such that the support
${\rm supp}(\mu)$ of $\mu$ is contained
in ${\cal L\cal L}(m_1,\dots,m_\ell;-p_1,p_2)$. 
Notice that such a point is contained in the 
interior of ${\cal V}(\tau)$. 
If  $\beta\in {\cal V}^*(\tau)$ is arbitrary
then the measured geodesic laminations
$\mu,\beta$ bind $S_{g,n}$ (since the support of 
$\beta$ is different from the support of $\mu$ and 
${\rm supp}(\mu)$ fills up $S_{g,n}$).  Hence
if we put $\hat \beta=\beta/\iota(\mu,\beta)$ then 
the pair $(\mu,\hat \beta)$ defines
a point $q(\mu,\hat \beta)\in {\cal Q}(\tau)$.
By Lemma \ref{containedinstratum}, we have 
$q(\mu,\hat \beta)\in \tilde{\cal Q}(m_1,\dots,m_\ell;-p_1,p_2)$. 

Recall that ${\cal V}^*(\tau)$ is homeomorphic to a cone
over a closed cell. The dimension of ${\cal V}^*(\tau)$ 
coincides with the
dimension of ${\cal V}(\tau)$ (see \cite{PH92} for a precise 
statement which shows that collapsing the bigons in $\tau^*$ yields
a large train track of the same type as $\tau$).
Let $V$ be the interior of this cell. By continuity and
invariance of domain, we conclude that the 
set $\{q(\mu,\beta)\mid \beta\in V\}$ is an open subset  
of the \emph{strong stable manifold} defined by $\mu$
in $\tilde {\cal Q}(m_1,\dots,m_\ell;-p_1,p_2)$.
In \emph{period coordinates}, such a
strong stable manifold consists of quadratic 
differentials in $\tilde {\cal Q}(m_1,\dots,m_\ell;-p_1,p_2)$ 
with the same real part. 

As measured geodesic laminations which are minimal and of 
the same topological type as $\tau$ are dense in the set of all
measured laminations in such a strong stable manifold
(see \cite{KMS86} for a comprehensive discussion of this fact),
we conclude that measured geodesic 
laminations with this property 
are dense in ${\cal V}^*(\tau)$.

Choose a measured geodesic lamination
$\nu\in {\cal V}^*(\tau)$ whose support 
${\rm supp}(\nu)$ is contained 
in ${\cal L\cal L}(m_1,\dots,m_\ell;-p_1,p_2)$. 
Using exactly the same reasoning as above, 
we deduce that for each $\alpha\in {\cal V}(\tau)$,
the pair $(\hat \alpha,\nu)$ defines a quadratic differential
$q(\hat \alpha,\nu)\in \tilde {\cal Q}(m_1,\dots,m_\ell;-p_1,p_2)$
where $\hat \alpha=\alpha/\iota(\alpha,\nu)$. 
Furthermore, the measured geodesic laminations whose
supports are minimal and of the same topological
type as $\tau$ are dense in ${\cal V}(\tau)$.

As a consequence, there exists a dense subset 
of $Q(\tau)$ which is contained in 
$\tilde {\cal Q}(m_1,\dots,m_\ell;-p_1,p_2)$.
As $\tilde {\cal Q}(m_1,\dots,m_\ell;-p_1,p_2)$ is locally
closed in $\tilde {\cal Q}(S_{g,n})$, the intersection
$\tilde {\cal Q}(m_1,\dots,m_\ell;-p_1,p_2)\cap {\cal Q}(\tau)$
is open and dense in ${\cal Q}(\tau)$. 
Thus to complete the proof of the proposition, it suffices to show
that the set of all pairs 
$(\alpha,\beta)\in {\cal V}(\tau)\times {\cal V}^*(\tau)$ 
which give rise to a differential $q(\alpha,\hat \beta)\in 
\tilde {\cal Q}(m_1,\dots,m_\ell;-p_1,p_2)$ is path connected.
Since area renormalization is continuous, 
to this end it suffices to construct paths of differentials 
whose areas may be different from one.

Thus let $(\alpha,\beta),(\alpha^\prime,\beta^\prime)$ be two pairs with 
$q(\alpha,\beta)\in \tilde {\cal Q}(m_1,\dots,m_\ell;-p_1,p_2)$ where
$q(\alpha,\beta)$ denotes the differential defined by $\alpha,\beta$, of area
$\iota(\alpha,\beta)$. 
Now binding $S_{g,n}$ is an open condition
for pairs of measured laminations, and
$\tilde {\cal Q}(m_1,\dots,m_\ell;-p_1,p_2)\cap {\cal Q}(\tau)$ 
is open in ${\cal Q}(\tau)$ by the above discussion. Since
moreover  
the set of all measured laminations whose support is of type
${\cal L\cal L}(m_1,\dots,m_\ell;-p_1,p_2)$ is dense in 
${\cal V}(\tau)$, there exist laminations 
$\mu,\mu^\prime\in {\cal V}(\tau)$ with the following properties.
\begin{enumerate}
\item ${\rm supp}(\mu),{\rm supp}(\mu^\prime)$ are of 
type ${\cal L\cal L}(m_1,\dots,m_\ell;-p_1,p_2)$.
\item There is a path $c,c^\prime:[0,1]\to {\cal V}(\tau)$ 
connecting $\alpha$ to $\mu$, 
$\alpha^\prime$ to $\mu^\prime$ so that 
for every $t\in [0,1]$, the pair $(c(t),\beta)$ and 
$(c^\prime(t),\beta^\prime)$ binds $S_{g,n}$, and 
the paths $t\to q(c(t),\beta)$ and $t\to q(c^\prime(t),\beta^\prime)$ 
are contained 
in $\tilde {\cal Q}(m_1,\dots,m_\ell;-p_1,p_2)$ up to area renormalization.
\end{enumerate}
 
By the beginning of
this proof, for every measured
lamination $\xi\in {\cal V}^*(\tau)$, 
the pair
$(\mu,\xi)$ determines a quadratic differential $q(\mu,\xi)\in  
\tilde {\cal Q}(m_1,\dots,m_\ell;-p_1,p_2)$
(up to area renormalization), 
and the same holds true
for the pair $(\mu^\prime,\xi)$.
Choose a measured lamination $\nu\in {\cal V}^*(\tau)$ whose
support is contained in 
${\cal L\cal L}(m_1,\dots,m_\ell;-p_1,p_2)$.
Using the discussion in the previous paragraphs,
the differential $q(\mu,\beta)$ 
can be connected to $q(\mu,\nu)$ by a path in $Q(\tau)$ which is contained
in $\tilde {\cal Q}(m_1,\dots,m_\ell;-p_1,p_2)\cap {\cal Q}(\tau)$, 
and there also is such a path connecting
$q(\mu^\prime,\beta^\prime)$ and $q(\mu^\prime,\nu)$.
Another application of this 
argument shows that the differential
$q(\mu^\prime,\nu)$ can be connected to $q(\mu,\nu)$ by a path 
in $\tilde {\cal Q}(m_1,\dots,m_\ell;-p_1,p_2)\cap {\cal Q}(\tau)$. 

Together we find that the differentials $q(\alpha,\beta)$ and 
$q(\alpha^\prime,\beta^\prime)$ can both be connected to $q(\mu,\nu)$ 
by a path in $\tilde {\cal Q}(m_1,\dots,m_\ell;-p_1,p_2)\cap Q(\tau)$
(up to area renormalization). As the pairs $(\alpha,\beta),
(\alpha^\prime,\beta^\prime)\in {\cal V}(\tau)\times {\cal V}^*(\tau)$ 
were arbitrarily chosen
with the property that they determine
quadratic differentials in $\tilde {\cal Q}(m_1,\dots,m_\ell;-p_1,p_2)$, 
the proposition follows.  
\end{proof}

The next proposition is a converse
to Proposition \ref{structure} 
and  shows that train tracks can be used to define
coordinates on components of strata.

\begin{proposition}\label{all}
\begin{enumerate}
\item For every $q\in \tilde{\cal H}(k_1,\dots,k_s;n)$
there is an orientable train track
$\tau\in {\cal L\cal T}(2k_1,\dots,2k_s;0,n)$ 
so that $q$ is an interior
point of ${\cal Q}(\tau)$.
\item For every
$q\in \tilde{\cal Q}(m_1,\dots,m_\ell;-p_1,p_2)$ 
there is a non-orientable train track
$\tau\in {\cal L\cal T}(m_1,\dots,m_\ell;-p_1,p_2)$ 
so that $q$ is an interior point of ${\cal Q}(\tau)$.
\end{enumerate}
Furthermore, if $q$ contains a horizontal cylinder then 
$\tau$ can be chosen in such a way that the core curve
of this cylinder is embedded in $\tau$.
\end{proposition}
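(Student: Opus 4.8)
The plan is to build $\tau$ directly from the singular flat metric of $q$, by collapsing a decomposition of $S$ into euclidean rectangles adapted to the vertical foliation; the abelian and quadratic cases are handled simultaneously, the abelian one being exactly the case in which the resulting train track is orientable. I would first record the following reduction. Write $\lambda^-,\lambda^+$ for the horizontal and vertical measured geodesic laminations of $q$, so that $q$ is the differential $q(\lambda^-,\lambda^+)$ determined by this binding pair. It suffices to produce a \emph{large} train track $\tau$ of topological type $(m_1,\dots,m_\ell;-p_1,p_2)$ in the quadratic case (resp.\ of the orientable type $(2k_1,\dots,2k_s;0,n)$ in the abelian case) such that $\tau$ carries $\lambda^-$ with a transverse measure which is positive on every branch, and such that the dual bigon track $\tau^*$ carries $\lambda^+$ with associated tangential measure on $\tau$ positive on every branch. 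Indeed the first positivity means that $\lambda^-$ lies in the interior of the cone ${\cal V}(\tau)$ and the second that $\lambda^+$ determines an interior point of ${\cal V}^*(\tau)$; in view of the description of ${\cal Q}(\tau)$ furnished by Proposition \ref{structure} these are precisely the conditions placing $q$ in the interior of ${\cal Q}(\tau)$.

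For the construction itself I would cut $(S,q)$ open along a collection of vertical saddle connections and along a suitably generic horizontal transversal so as to obtain the classical zippered rectangle decomposition of $(S,q)$ into finitely many euclidean rectangles with horizontal and vertical sides, on each of which the horizontal (resp.\ vertical) foliation of $q$ is the foliation by horizontal (resp.\ vertical) segments. Collapsing every rectangle onto its horizontal core arc, and then applying a standard perturbation to make all switches at most trivalent, produces a generic train track $\tau$ which carries $\lambda^-$. The transverse measure on $\tau$ induced by $\lambda^-$ assigns to a branch the height of the corresponding rectangle, and hence is positive on every branch; dually $\lambda^+$ runs straight across each rectangle, so it hits $\tau$ efficiently and is carried by $\tau^*$, the induced tangential weight on a branch being the width of the rectangle, again positive on every branch. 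Since each rectangle collapses to an arc, a complementary region of $\tau$ deformation retracts onto the part of the vertical boundary clustering at a singular point of $q$: a zero of cone angle $(m_i+2)\pi$ gives an $(m_i+2)$-gon, a simple pole a once punctured monogon, and a regular marked point a once punctured bigon, so $\tau$ has the asserted topological type; and $\tau$ is orientable precisely when $q$ comes from an abelian differential, in which case a zero of order $k_i$ has cone angle $(2k_i+2)\pi$ and contributes a $(2k_i+2)$-gon. Finally, if $q$ has a horizontal cylinder $C$, I would arrange the decomposition so that $C$ is a cyclic chain of rectangles glued along vertical sides; their core arcs then concatenate to the core curve $\gamma$ of $C$, which is therefore an embedded closed trainpath and so an embedded subtrack of $\tau$, which is the last assertion of the proposition.

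It then remains to verify that $\tau$ is large, that is, both fully recurrent and fully transversely recurrent. The differentials in the given stratum with no horizontal and no vertical saddle connection form a dense subset, and for such a differential $q'$ both foliations are minimal and fill $S_{g,n}$; moreover $q'\mapsto(\lambda^-(q'),\lambda^+(q'))$ is continuous. Hence I would pick such a $q'$ close enough to $q$ that $\lambda^-(q')$ still lies in the (open) interior of ${\cal V}(\tau)$ and $\lambda^+(q')$ still determines an interior point of ${\cal V}^*(\tau)$. By the correspondence between minimal measured foliations and minimal large geodesic laminations used in the proof of Lemma \ref{containedinstratum}, the supports of $\lambda^-(q')$ and $\lambda^+(q')$ are then minimal laminations of type ${\cal L\cal L}(m_1,\dots,m_\ell;-p_1,p_2)$ (resp.\ of the orientable type); thus $\tau$ carries a minimal large lamination of its type and $\tau^*$ carries one as well, which is exactly the statement that $\tau$ is large. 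Together with the reduction of the first paragraph this would complete the proof.

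The step I expect to be the main obstacle is the combinatorial part of the construction: arranging the rectangle decomposition so that the collapse is an isotopy class of train track of the \emph{exact} topological type of the given stratum, with no inadvertent merging or splitting of zeros, and checking carefully that the dual bigon track carries $\lambda^+$ with the stated efficiency and positivity. One must in particular handle the non-generic differentials---those with horizontal or vertical saddle connections, or with completely periodic foliations---for which $\lambda^-$ itself need not fill $S$, so that $\tau$ cannot simply be taken to be a tie neighbourhood of a geodesic realization of $\lambda^-$ and the flat-geometric construction is genuinely needed.
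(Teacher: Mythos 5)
The paper's own proof is also flat-geometric, but it is built from a genuinely different decomposition: instead of a zippered rectangle decomposition adapted to a horizontal transversal, the paper constructs small polygonal neighborhoods $V_i$ around each singular point (with alternating horizontal and vertical sides determined by the separatrices), connects them by thin euclidean rectangles using an area argument, and then eliminates the resulting complementary bigons and annuli by collapsing along the vertical (or a nearby saddle-connection-free) direction, with Gauss--Bonnet controlling the Euler characteristic of complementary regions at every step. The advantage of that approach is that the topological type is read off locally at each singular point (a cone angle $k\pi$ automatically contributes a $k$-gon), and the horizontal saddle connections, marked regular points and horizontal cylinders are each identified with a specific phenomenon in the construction (a direct strip between two $V_i$'s, a once-punctured bigon, an annulus component to be collapsed), so the final sentence about embedding a cylinder's core curve comes out for free.

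The part of your argument I don't believe as written is the collapse. ``Collapsing every rectangle onto its horizontal core arc'' does not obviously produce an embedded graph of the right homotopy type, because in a zippered rectangle decomposition the rectangles are glued to each other along \emph{horizontal} sides by the interval exchange, and under a vertical collapse these horizontal gluings force identifications \emph{between} core arcs of different rectangles, not just end-to-end joins at shared vertical sides. One then has to track exactly which subintervals of which cores get identified, where the induced switches lie, and which complementary region each singular point ends up in. In particular the statement that ``a complementary region of $\tau$ deformation retracts onto the part of the vertical boundary clustering at a singular point'' is the crux of the whole proposition and is not established; you correctly flag this yourself, but without it the topological type $(m_1,\dots,m_\ell;-p_1,p_2)$ is not obtained. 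The paper avoids exactly this difficulty by never trying to get the train track in one global collapse: it builds the local pieces near the $u_i$ first, where the cusp count is manifest, and then joins and collapses incrementally, invoking Gauss--Bonnet to rule out any stray monogons or bigons after each step.

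Two smaller remarks. First, your opening reduction (that it suffices to place $\lambda^-$ in the interior of ${\cal V}(\tau)$ and $\lambda^+$ in the interior of ${\cal V}^*(\tau)$) is correct and is essentially how the interior-point claim is verified in the paper too, but it does lean on the parametrization result of Proposition~\ref{structure} and on invariance of domain, so it deserves a sentence of justification rather than an ``indeed.'' Second, the existence of a zippered rectangle decomposition for an arbitrary $q$ with both horizontal and vertical saddle connections, and compatible with a prescribed horizontal cylinder, is itself a nontrivial preliminary step (not just a ``suitably generic'' choice), and the way the paper sidesteps it --- by choosing a direction near vertical with no saddle connection, after the graph $\eta$ has already been built --- is considerably more robust than asking for a global rectangle decomposition at the outset.
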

\begin{proof}
Let $q\in \tilde{\cal Q}(m_1,\dots,m_\ell;-p_1,p_2)$ and let 
$\Sigma=\{u_1,\dots,u_s\}$ $(s=\ell+p_1+p_2)$ be the
singular set of $q$, that is, the union of the
zeros and poles and marked regular points.

Recall that $q$ defines a singular euclidean metric $h$ on $S_{g,n}$ as well as two 
measured foliations, the horizontal and the vertical measured foliation. 
If $x$ is a singular point of the metric $h$,
then $x$ is a cone point of cone angle
$k\pi$ for some positive integer
$k\not=2$. There are precisely $k$ horizontal and precisely $k$
vertical separatrices which begin at $x$. The zeros of the differential
correspond to cone points with cone angle $k\pi$ for some 
$k\geq 3$.

Choose a number $\epsilon >0$ which is smaller than $1/8$-th of the
distance in the metric $h$ between any two singular points. 
Let $u_i\in \Sigma$ be a singular point of cone angle $k\pi$ for some
$k\geq 1$. There exists a neighborhood $V_i$ of $u_i$ with the
following properties. The boundary $\partial V_i$ of $V_i$ is 
a polygon with $2k$ sides. The sides are alternating between vertical 
arcs of fixed length $\sigma <\epsilon/10$ and horizontal arcs. 
The midpoint of a vertical arc is a point of distance $\epsilon$
on a horizontal separatrix through $u_i$. Note that 
the polygon is uniquely determined by these requirements.

Out of the polygons $V_i$ $(i\leq s)$ we construct a 
train track $\eta_i$ with stops whose 
switches are the midpoints of the vertical sides of the polygon
$\partial V_i$. Thus each switch 
is a point of distance $\epsilon$ to the singular
point $u_i$ on a horizontal separatrix $\zeta_i$.

Two different switches 
on separatrices $\zeta_i^1,\zeta_i^2$ starting at $u_i$ 
are connected by
a branch in $\eta_i$ if the angle at $u_i$ between $\zeta_i^1,\zeta_i^2$
equals $\pi$, or, equivalently, if there is a path in $\partial V_i$ connecting
$\zeta_i^1,\zeta_i^2$ which travels through precisely one horizontal side
of $\partial V_i$.  
These branches are constructed in such a way that
all the vertical
sides of the polygons $\partial V_i$ are replaced by a cusp.
Furthermore, we require that all branches are
contained in $V_i$ and do not intersect $u_i$.
Figure B shows this construction.
\begin{figure}[ht]
  \begin{center}
\psfrag{Figure D}{Figure B} 
\includegraphics[width=0.7\textwidth] 
{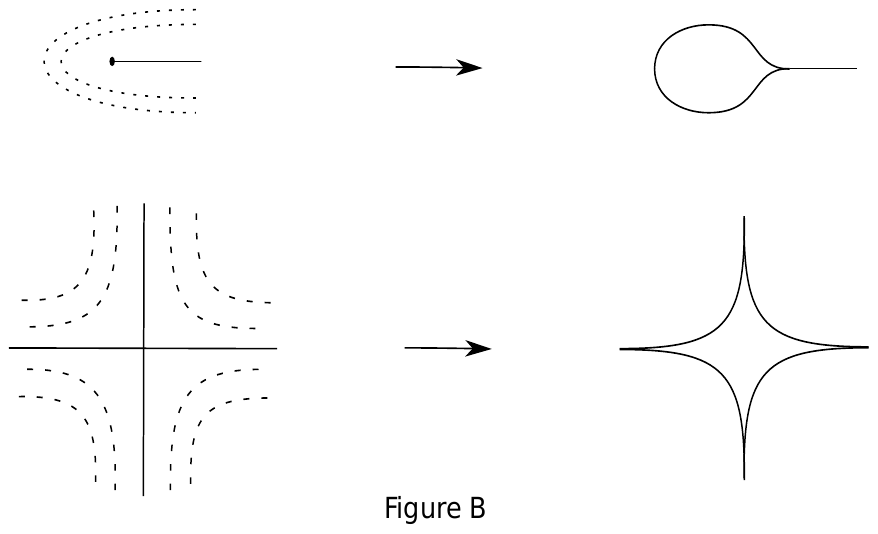}
\end{center}
\end{figure}

The construction can be done in such a way that 
$\eta_i$ is transverse
to the vertical measured foliation of $q$- more precisely, by
adjusting the constant $\sigma$ we can assume that the
tangent of $\eta_i$ is arbitrarily close to the horizontal
direction and that each branch of $\eta_i$ is an arc of arbitrarily 
small geodesic
curvature for the euclidean metric $h$.
There is a complementary
component $C_i$ of $V_i-\eta_i$
which is a polygon with $2k$ cusps. Its closure is contained in $V_i$
and meets $\partial V_i$ only at the cusps. It 
contains the singular point $u_i$. 
The cusps of the component are the vertices
of $\eta_i$. 
The component $C_i$ is 
a once punctured monogon
if $u_i$ is a pole, that is, if $k=1$,
a once punctured bigon if $u_i$ is a regular marked point, or 
an $m_i+2$-gon if $u_i$ is a zero of order $m_i$.

Let $\hat \eta$ be the union of the train tracks with stops $\eta_i$;
this union consists of $\ell+p_1+p_2$ connected components,
and it has $\sum_i(m_i+2)+p_1+2p_2$ vertices. The graph
$\hat \eta$ is transverse to the vertical foliation of $q$. By construction, 
$\hat\eta$ also is transverse to the straight line
foliation on $S$ defined by any direction for the singular euclidean metric
on $S$ which is
sufficiently close to the vertical direction.

A \emph{generalized bigon track} is a graph with all properties of a
train track except that we allow the existence of complementary
bigons, and we allow complementary annuli. 
Out of the train track with stops $\hat \eta$ we construct
a generalized bigon track $\eta$ on $S$ by inductively replacing a
stop by a switch and adding additional branches
as follows. 

For $i\leq s$ let 
$\beta$ be a vertical side of the polygon $\partial V_i$. 
Then for small enough $d>0$, 
$\beta$ is a side of an euclidean (right angled) rectangle $R_d$ in $S$ 
of width $d$ which intersects the polygonal disk $V_i$ precisely in $\beta$. 
We require that the interior of $R_d$ is disjoint
from the disks $V_j$. 
The area of $R_d$ equals $\sigma d$. 
Thus by consideration of area, 
there exists a smallest number $d_0>0$ such that the
vertical side $\beta^\prime$ of $R_{d_0}$ distinct from $\beta$ 
intersects one of the polygonal disks $V_j$. Then
$\beta^\prime\cap \partial V_j$
is a (possibly degenerate) subarc of a vertical side
$\xi$ of $\partial V_j$.

There are two possibilities. In the first case,
$\beta^\prime=\xi$. Then $V_i\cup R_{d_0}\cup V_j$ contains
a horizontal saddle connection joining $u_i$ to $u_j$.
Connect the cusp of the component $\eta_i$
of $\hat \eta$ contained in $V_i$ to the cusp of the
component $\eta_j$ of
$\hat \eta$ contained in $V_j$ by the subsegment of the
saddle connection which is contained in $R_{d_0}$.
This construction yields a new generalized bigon track  $\hat\eta^\prime$
with stops, and
the number of stops of $\hat \eta^\prime$ equals the number of stops of
$\hat \eta$ minus two.

The second possibility is that $\beta^\prime\cap \xi$ is a proper
subarc of $\xi$ (perhaps degenerate to a single point).
Then precisely one of the endpoints of $\xi$ is contained in 
$\beta^\prime$; denote this point by $z$.
The point $z$ is a vertex of the polygon $V_j$ and hence it is 
contained in a horizontal side of the polygon $V_j$
and determines
a branch $b$ of the train track
with stops $\hat \eta$. 
Connect the midpoint $y$ of the
side $\beta$ of $R_{d_0}$ (which is a stop of $\eta_i$) to 
an interior point of the branch $b$ with an arc $\nu$ 
in such a way that the union
$\eta_i\cup \nu\cup \eta_j$ is a generalized 
bigon track $\hat \eta^\prime$ with stops, and
that there is an arc of class $C^1$ contained in
$\hat \eta^\prime$ 
connecting the stop $y$ of $\eta_i$ to the endpoint of
the branch $b$ distinct
from $z$. In $\hat \eta^\prime$, 
the midpoint $y$ of the vertical side $\beta$ of 
$V_i$ (which was a stop in
$\eta_i$) is a trivalent switch. The generalized
bigon track $\hat \eta^\prime$ 
has fewer stops than $\hat \eta$.

Doing this construction with each of the 
stops of $\hat \eta$ replaces $\hat \eta$ by
a generalized bigon track $\eta$.
This can be done in such a way that each branch of $\eta$
is a smooth arc whose tangent line is everywhere close
to the horizontal subbundle of the tangent bundle of
$S_{g,n}-\Sigma$.

We show next that a complementary component of $\eta$ 
which does not contain a singular point of $q$ either is a bigon 
or an annulus with no singular point on the boundary. For this it
suffices to show that the Euler characteristic
of each complementary region which does not contain any marked
point vanishes. This Euler characteristic is computed by
giving each cusp in its boundary the value $-1$
(see \cite{PH92} for this computation).  
Thus a disk with 3 cusps
at the boundary has Euler characteristic $-1$.
The sum of the Euler characteristics of the 
complementary regions of $\eta$ containing a zero of $q$, a puncture or
a marked regular point equals
the Euler characteristic of $S_{g,n}$. By the Gauss Bonnet theorem, 
there are no complementary
monogons of $\eta$ without marked point in the interior.
Namely, up to adjusting the constant $\sigma$, the total geodesic curvature of any 
branch of $\eta$ can be chosen to be arbitrarily small, while the total number of 
branches is bounded from above by a constant independent of $\sigma$. 
Thus the Euler characteristic of every complementary
component of $\eta$ is non-positive and hence
the  Euler characteristic of every complementary component
not containing a singular point of $q$ or a marked regular point has
to vanish. Hence each such component either is a bigon or an annulus. 
An annulus component corresponds to a horizontal cylinder of $q$.

To construct a train track out of $\eta$ we begin with collapsing
successively the complementary bigons of $\eta $. Namely,  
the set of all directions for the flat metric defined by $q$ which 
are tangent to some saddle connection is
countable and hence we can find arbitrarily near the vertical
direction a direction 
which is not tangent to any saddle connection. 
By construction of $\eta$,
we may assume that this direction is transverse to
$\eta$. For simplicity
of exposition we will call this direction vertical in the sequel.
We use the singular foliation defined by this direction
as follows. 

Let $B$ be a complementary component of $\eta $ which is a
bigon. The boundary $\partial B$ of this bigon consists of two arcs
$a_1,a_2$ which are nearly
horizontal and which meet tangentially at their endpoints.
The vertical foliation is transverse to these
sides, and non-singular in the bigon. 

Let $x\in a_1$ be an interior point. We claim that $x$ is the starting point of a vertical segment
whose interior is contained in the interior of $B$ and whose endpoint
$F(x)$ is contained in the interior of the second side $a_2$ of $\partial B$. Namely, by transversality
and compactness, $x$ is the starting point of a vertical arc $\gamma$ whose interior is 
contained in the interior of $B$ and whose second endpoint $y$ is contained in 
a side of $\partial B$. 
If $y$ is contained in the same side $a_1$ of $\partial B$ as $\gamma(0)$
then $y$ bounds together with the subarc of $a_1$ connecting 
$x$ to $y$ an euclidean disk whose boundary consists of two smooth
arcs with small curvature 
which meet at the endpoints with an angle close to $\pi/2$. However, this violates
the Gauss Bonnet theorem. Thus indeed, $B$ is foliated by
vertical arcs with one endpoint on $a_1$ and the second endpoint on
$a_2$. 

Now observe that although the boundary of $B$ may not be
embedded in $S_{g,n}$ (we only know that the interior of $B$ is embedded), 
the two endpoints of any vertical arc as above
are distinct since there is no vertical saddle connection and hence no
vertical closed geodesic by assumption. This means that we can
collapse these vertical arcs
to points and collapse in this way the bigon $B$ to a single arc. 
Let $\hat \theta$ be the generalized bigon track
obtained in this way.

There is a 
map $F_0:S_{g,n}\to S_{g,n}$ of class $C^1$ which is
homotopic to the identity, which equals the identity
in a small neighborhood of the bigon $B$ and which maps $\eta$ to $\hat \theta$ 
by collapsing the vertical arcs crossing through $B$.
As the sides of $B$ are
nearly horizontal, the differential of the restriction of the 
collapsing map $F_0$ to each horizontal arc vanishes nowhere.

Using once more the fact that vertical trajectories do not contain loops, we can repeat this
process with any other bigon. In finitely many such steps
we construct a generalized
bigon track $\theta$ and a map 
$F:S_{g,n}\to S_{g,n}$ with the following properties.
\begin{enumerate}
\item $\theta$ does not have any complementary bigon components.
\item $F$ is homotopic to the identity and of class $C^1$. 
\item $F(\eta)=\theta$. 
\item The differential of the restriction of $F$
  to the horizontal foliation of $q$ 
  vanishes nowhere, and it maps the intersection of the horizontal
foliation of $q$ with the bigon complementary components of $\eta$ to 
smoothly immersed arcs in $\theta$.
\end{enumerate}

The generalized bigon track $\theta$ may not be a
train track as it may have complementary components which 
are annuli. However, the above construction can also be used to collapse annuli to circles. 
To this end let $A$ be a complementary annulus of $\theta$. 
By construction of 
$\theta$, $A$ is contained in a horizontal 
cylinder $C$ for $q$, and its closure does not contain a singular point of $q$.
Furthermore, its boundary curves are 
transverse to the vertical foliation. 

Let $a_1,a_2$ be the two boundary curves of $A$. 
For a point $x\in a_1$, there is a unique subarc 
$v(x)$ of a vertical trajectory
starting at $x$ which is entirely contained in $A$ and connects
$x$ to a point $\psi(x)$ contained in the boundary of $A$.
Using once more the Gauss Bonnet theorem, we conclude that 
in fact $\psi(x)\in a_2$. 
As there are no vertical cylinders 
and the closure of $A$ does not
contain singular points, we have $\psi(x)\not=x$. 
Furthermore, the arc $v(x)$ 
depends smoothly on $x$. 

By the discussion in the previous paragraph, 
for each $x\in a_1$ we can collapse the arc $v(x)$ to a point. 
The result is a new generalized bigon
track which carries the horizontal measured
geodesic lamination of $q$ and is such that the number of complementary
components which are annuli is strictly smaller than the 
number of annuli components of $\theta$. Repeating this
construction with all the finitely many annuli components of $S_{g,n}-\theta$, 
we construct
in this way from $\theta$ a train track $\tau$.
There is a map $F:S_{g,n}\to S_{g,n}$ of class $C^1$ with the following property. 
\begin{enumerate}
\item $F(\eta)=\tau$.
\item $F$ equals the identity near the singular points of $q$.
\item The restriction of the differential of $F$ to the tangent bundle of the horizontal
foliation of $q$ vanishes nowhere. 
\end{enumerate}
As a consequence, the train track $\tau$ carries the horizontal
measured foliation of $q$. Furthermore, the 
vertical measured geodesic
lamination of $q$ hits $\tau$ efficiently (see \cite{PH92}) and
hence it is carried by $\tau^*$. 
Each complementary component of $\tau$ contains
precisely one singular point of $q$, and the component is 
a $k+2$-gon if and only if the singular point
is a zero of order $k$. This yields that 
$\tau$ is of topological type $(m_1,\dots,m_\ell;-p_1,p_2)$. 

We are left with showing that  $\tau$ is large. Now by construction,
$\tau$ carries the horizontal geodesic lamination of $e^{is}q$
provided that $s$ is sufficiently
close to $0$. But the set of directions
for the singular euclidean metric defined
by $q$ so that the horizontal foliation in this direction is minimal and of the 
type predicted by the number and multiplicities of the
zeros of $q$ is dense \cite{KMS86}. This implies that 
$\tau$ carries a geodesic lamination which is minimal, large and of the same 
topological type as $q$. Similarly, for $s$ sufficiently close to zero,
the vertical measured geodesic lamination of $e^{is}q$ hits 
$\tau$ sufficiently. Thus as before, $\tau^*$ carries 
a minimal large geodesic lamination of the same topological type
as $\tau$. In other words, $\tau$ 
has all the properties required in the
proposition.

Now if $q$ is an abelian differential, then the horizontal and
vertical foliations of $q$ are orientable. As the initial 
generalized bigon track 
$\hat \eta$ is constructed from the horizontal foliation of  
$q$, it inherits an orientation from the horizontal foliation of
$q$. The collapsing construction uses the orientable
vertical foliation, and it is straightforward that this
construction respects orientations as well. Then the resulting
train track $\tau$ is orientable. 
\end{proof}

We summarize the discussion in this section as follows.

Let ${\cal Q}$ be a component of the stratum ${\cal Q}(m_1,\dots,m_\ell;-p_1,p_2)$
of ${\cal Q}(S_{g,n})$ (or of the stratum ${\cal H}(m_1/2,\dots,m_\ell/2;p)$ 
of ${\cal H}(S_{g,p})$) and let $\tilde {\cal Q}$ be
the preimage of ${\cal Q}$ in $\tilde {\cal Q}(S_{g,n})$
(or in $\tilde {\cal H}(S_{g,n})$). 
Then there is a collection
\[{\cal L\cal T}(\tilde{\cal Q})
\subset {\cal L\cal T}(m_1,\dots,m_\ell;-p_1,p_2)\]
of large marked train tracks $\tau$ of the same 
topological type as ${\cal Q}$ such that for every $\tau\in 
{\cal L\cal T}(\tilde {\cal Q})$, the set 
${\cal Q}(\tau)$
contains an open path connected
subset of $\tilde {\cal Q}$.

The set ${\cal L\cal T}(\tilde {\cal Q})$ 
is invariant under the action of the mapping class group.
Its quotient ${\cal L\cal T}({\cal Q})$ 
under this action is finite and is called
the \emph{set of combinatorial models for ${\cal Q}$}.
The set
\[
  \cup_{\tau\in {\cal L\cal T}(\tilde {\cal Q})}
 {\cal Q}(\tau)\] 
is closed, ${\rm Mod}(S)$-invariant and contains
$\tilde {\cal Q}$ as an open dense subset, that is, it coincides
with the closure of $\tilde {\cal Q}$ in 
$\tilde {\cal Q}(S_{g,n})$.

\begin{lemma}\label{splitinv}
Let ${\cal Q}$ be a component of a stratum, with preimage
$\tilde {\cal Q}$ in $\tilde {\cal Q}(S_{g,n})$, let 
$\tau\in {\cal L\cal T}(\tilde {\cal Q})$ and let $\eta$ be a large
train track of the same topological type as $\tau$ 
which is carried by $\tau$. 
Then $\eta\in {\cal L\cal T}(\tilde {\cal Q})$.
\end{lemma}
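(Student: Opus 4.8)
The plan is to deduce the statement from the set inclusion ${\cal Q}(\eta)\subseteq {\cal Q}(\tau)$, combined with Proposition \ref{structure} and the elementary topology of connected components of a stratum. I treat the orientable and non-orientable cases simultaneously, writing $\tilde{\cal Q}(m_1,\dots,m_\ell;-p_1,p_2)$ and $\tilde{\cal Q}(S_{g,n})$ for the relevant stratum and its ambient space (to be read as $\tilde{\cal H}(m_1/2,\dots,m_\ell/2;n)$ and $\tilde{\cal H}(S_{g,n})$ in the orientable case), and $\tilde{\cal Q}$ for the preimage of ${\cal Q}$.

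First I would record the inclusion ${\cal Q}(\eta)\subseteq {\cal Q}(\tau)$. Recall that $q\in {\cal Q}(\tau)$ precisely when the horizontal measured geodesic lamination of $q$ lies in ${\cal V}(\tau)$ and the vertical one is carried by the dual bigon track $\tau^*$. Since $\eta$ is carried by $\tau$, every geodesic lamination carried by $\eta$ is carried by $\tau$, so ${\cal V}(\eta)\subseteq {\cal V}(\tau)$ by Lemma \ref{polyhedron}(3). The remaining, and main, point is that ${\cal V}^*(\eta)\subseteq {\cal V}^*(\tau)$, i.e.\ that every measured geodesic lamination carried by $\eta^*$ is carried by $\tau^*$. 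Here the hypothesis that $\eta$ and $\tau$ have the \emph{same} topological type is essential: it guarantees, via Remark \ref{fullnecessary}, that a carrying map $\eta\to\tau$ induces a bijection of complementary regions preserving the number of cusps, so that no subdivision occurs and $\eta$ is obtained from $\tau$ by a finite sequence of shifts and splits; each such elementary move induces a shift, a split, or no change of the dual bigon track (Section 3.4 of \cite{PH92}), and iterating gives $\eta^*\prec \tau^*$, hence ${\cal V}^*(\eta)\subseteq {\cal V}^*(\tau)$. Equivalently, one checks directly that a measured geodesic lamination which hits $\eta$ efficiently also hits $\tau$ efficiently. I expect this dualization step to be the part requiring the most care: for a genuine \emph{subtrack} $\sigma<\tau$ of a different topological type the inclusion of dual cones runs the other way, so the argument must genuinely exploit that complementary regions are not subdivided. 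Granting this, ${\cal Q}(\eta)\subseteq {\cal Q}(\tau)$ follows.

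Finally I would conclude as follows. Since $\eta$ is large of type $(m_1,\dots,m_\ell;-p_1,p_2)$, Proposition \ref{structure} produces a connected component $\tilde{\cal Q}'$ of $\tilde{\cal Q}(m_1,\dots,m_\ell;-p_1,p_2)$ such that ${\cal Q}(\eta)$ is the closure in $\tilde{\cal Q}(S_{g,n})$ of a nonempty open path connected subset $V$ of $\tilde{\cal Q}'$; it suffices to prove $\tilde{\cal Q}'=\tilde{\cal Q}$. Applying Proposition \ref{structure} to $\tau\in {\cal L\cal T}(\tilde{\cal Q})$ shows that ${\cal Q}(\tau)$ is contained in the closure $\overline{\tilde{\cal Q}}$ of $\tilde{\cal Q}$ in $\tilde{\cal Q}(S_{g,n})$, so $\emptyset\neq V\subseteq {\cal Q}(\eta)\subseteq {\cal Q}(\tau)\subseteq \overline{\tilde{\cal Q}}$. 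As $V$ lies in the stratum and $\tilde{\cal Q}$, being a connected component of the stratum, is closed in the stratum, closure in the subspace equals closure in the ambient space intersected with the subspace; hence $\overline{\tilde{\cal Q}}\cap \tilde{\cal Q}(m_1,\dots,m_\ell;-p_1,p_2)=\tilde{\cal Q}$ and therefore $V\subseteq \tilde{\cal Q}$. Since $V$ is also contained in $\tilde{\cal Q}'$ and distinct components are disjoint, $\tilde{\cal Q}'=\tilde{\cal Q}$, so $\eta\in {\cal L\cal T}(\tilde{\cal Q})$. The orientable case is verbatim the same, with $\tilde{\cal H}(S_{g,n})$ and $\tilde{\cal H}(m_1/2,\dots,m_\ell/2;n)$ in place of $\tilde{\cal Q}(S_{g,n})$ and $\tilde{\cal Q}(m_1,\dots,m_\ell;-p_1,p_2)$.
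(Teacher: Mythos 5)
There is a genuine gap: your central claim ${\cal V}^*(\eta)\subseteq {\cal V}^*(\tau)$ (equivalently $\eta^*\prec\tau^*$, whence ${\cal Q}(\eta)\subseteq {\cal Q}(\tau)$) runs in the wrong direction. For train tracks of the same topological type with $\eta\prec\tau$, the duals satisfy $\tau^*\prec\eta^*$, so that ${\cal V}^*(\tau)\subseteq {\cal V}^*(\eta)$: splitting refines the primal cone but coarsens the dual one. This is exactly the dualization the paper itself uses; in the proof of Lemma \ref{perron} the author passes from $\phi(\tau)\prec\tau$ to $\phi^{-1}(\tau^*)\prec\tau^*$, i.e. ${\cal V}^*(\phi^{-1}\tau)\subseteq {\cal V}^*(\tau)$, which with $\eta=\phi(\tau)\prec\tau$ reads ${\cal V}^*(\tau)\subseteq {\cal V}^*(\eta)$. (In the Rauzy--Veech picture: if the lengths transform by $\lambda=A\lambda'$ with $A$ non-negative, then by preservation of area the heights transform by $h'=A^Th$, so the length cone shrinks while the height cone grows under the identification.) Consequently ${\cal Q}(\eta)$ is not a subset of ${\cal Q}(\tau)$ in general, and the inclusion chain you use to place $V$ inside $\overline{\tilde{\cal Q}}$ breaks down.

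The repair is to produce a single differential in ${\cal Q}(\eta)\cap {\cal Q}(\tau)$ lying in the stratum, rather than a containment of the whole sets; this is what the paper does. Take $\lambda\in {\cal V}(\eta)\subseteq {\cal V}(\tau)$ whose support is of the same topological type as $\tau$, and take any $\nu\in {\cal V}^*(\tau)\subseteq {\cal V}^*(\eta)$. Then $(\lambda,\nu)$ defines a differential in ${\cal Q}(\eta)\cap {\cal Q}(\tau)$, and by Lemma \ref{containedinstratum} it lies in $\tilde{\cal Q}(m_1,\dots,m_\ell;-p_1,p_2)$. Your concluding topological argument --- applying Proposition \ref{structure} to both $\tau$ and $\eta$ and using disjointness of connected components of the stratum --- is correct and goes through verbatim with this common point in place of the false inclusion.
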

\begin{proof}
A point in ${\cal Q}(\tau)$ is defined by a pair $(\lambda,\nu)$ where
$\lambda\in {\cal V}(\tau)$ and where $\nu\in {\cal V}^*(\tau)$.
If we choose $\lambda$ in such a way that its support 
${\rm supp}(\lambda)$ is of the same topological type as $\tau$ and 
such that $\lambda$ is carried by the train track $\eta$, then 
$(\lambda,\nu)$ defines a 
differential in ${\cal Q}(\eta)\cap {\cal Q}(\tau)$.
It then follows from Proposition \ref{structure} that $\eta\in
{\cal L\cal T}({\cal Q})$.
\end{proof}

As a fairly immediate consequence of the above discussion
and Section 3 of \cite{H09}, we obtain a method to construct
large train tracks of a given topological type.
Namely, for a fixed choice of a complete hyperbolic 
metric on $S$ of finite volume and numbers $a>0,\epsilon >0$ there
is a notion of \emph{$a$-long} train track which 
\emph{$\epsilon$-follows} a large geodesic lamination 
$\lambda$. By definition, this means the following.
The \emph{straightening} of a 
train track $\tau$ with respect to the hyperbolic metric 
is obtained from $\tau$ by replacing each branch
$b$ by the geodesic segment which is homotopic with fixed endpoints to $b$.
We require that the length of each of the straightened edges is at least $a$, 
that their tangent lines are contained
in the $\epsilon$-neighborhood of the projectivized tangent bundle of 
$\lambda$ and that moreover the straightening of every
trainpath on $\tau$ is a piecewise geodesic whose exterior angles
at the breakpoints are not bigger than $\epsilon$. 

Lemma 3.2 of \cite{H09} shows that for every geodesic lamination
$\lambda$ on $S_{g,n}$ and every $\epsilon>0$ there is an $a$-long
generic transversely recurrent train track $\tau$ which carries
$\lambda$ and $\epsilon$-follows $\lambda$.

\begin{corollary}\label{followgood}
Let $\tau\in {\cal L\cal T}(\tilde{\cal Q})$ and let
$\lambda$ be a minimal large geodesic lamination of the same
topological type as $\tau$ which is carried by $\tau$.
Then for sufficiently small $\epsilon >0$, 
an $a$-long train track $\eta$ which $\epsilon$-follows
$\lambda$ is contained in ${\cal L\cal T}(\tilde {\cal Q})$.
\end{corollary}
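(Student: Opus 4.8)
The plan is to verify that $\eta$ satisfies the hypotheses of Lemma~\ref{splitinv} with respect to $\tau$ — that $\eta$ is large, of the same topological type as $\tau$, and carried by $\tau$ — and then to invoke that lemma. Write $(m_1,\dots,m_\ell;-p_1,p_2)$ for the topological type of $\tau$, so that $\lambda\in{\cal L\cal L}(m_1,\dots,m_\ell;-p_1,p_2)$ and $\lambda$ is carried by $\tau$. First I would recall, from Lemma~3.2 of \cite{H09} and the discussion preceding the corollary, that for $\epsilon$ sufficiently small an $a$-long train track $\eta$ which $\epsilon$-follows $\lambda$ carries $\lambda$ — long, nearly geodesic trainpaths on $\eta$ recover the leaves of $\lambda$ — and that $\eta$ may be taken generic and transversely recurrent.

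The crucial step is to show that $\eta$ is carried by $\tau$. For this I would use that $\tau$, being large, is transversely recurrent with complementary regions of controlled geometry, and that $\tau$ itself carries $\lambda$; by the comparison theory for train tracks $\epsilon$-following a fixed minimal large geodesic lamination developed in Section~3 of \cite{H09}, there is a constant $\epsilon_0=\epsilon_0(\tau)>0$, depending on $\tau$, with the property that every $a$-long train track which $\epsilon$-follows $\lambda$ with $\epsilon<\epsilon_0$ is carried by $\tau$. This is the main obstacle of the proof: it is precisely here that the quantitative estimates of \cite{H09} enter and that the dependence of all constants on the fixed track $\tau$ is used, and it is the reason the statement only asserts the existence of some small $\epsilon$ rather than an explicit threshold.

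It then remains to collect the formal consequences of $\lambda\prec\eta\prec\tau$. Since $\lambda$ and $\tau$ have the same topological type and, by Remark~\ref{fullnecessary}, the complementary components of $\tau$ are obtained from those of $\eta$ by subdivision and those of $\eta$ from those of $\lambda$ by subdivision, both subdivisions must be trivial, so $\eta$ is of topological type $(m_1,\dots,m_\ell;-p_1,p_2)$. Hence a carrying map $\lambda\to\eta$ is surjective (Remark~\ref{fullnecessary}), so $\eta$ is fully recurrent because it carries the minimal large lamination $\lambda$; moreover $\eta$ is orientable if and only if $\tau$ is, since an orientation can be transported from $\tau$ to $\lambda$ and from $\lambda$ to $\eta$ along the respective (surjective) carrying maps, and conversely. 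For full transverse recurrence I would argue that, by Section~3 of \cite{H09}, for $\epsilon$ small the dual bigon track of an $a$-long train track $\epsilon$-following the minimal large lamination $\lambda$ carries a minimal large geodesic lamination of the topological type of $\lambda$; alternatively, since $\tau$ is large its dual bigon track $\tau^*$ carries such a lamination, and because $\eta\prec\tau$ with $\eta$ of the same topological type as $\tau$, the relation between carrying and passage to dual bigon tracks (Section~3.4 of \cite{PH92}, see also Section~3 of \cite{H09}) shows that $\eta^*$ carries it as well. Thus $\eta$ is a large train track of topological type $(m_1,\dots,m_\ell;-p_1,p_2)$ which is carried by $\tau$, and Lemma~\ref{splitinv} gives $\eta\in{\cal L\cal T}(\tilde{\cal Q})$.
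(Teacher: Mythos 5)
Your proposal is correct and follows essentially the same route as the paper: establish that $\eta$ is a large train track of the same topological type as $\tau$, that $\eta$ carries $\lambda$, and that $\eta\prec\tau$ (all three citing Section~3 of \cite{H09} for the geometric controls on $a$-long, $\epsilon$-following train tracks), then conclude via Lemma~\ref{splitinv}. The only notable variation is that you deduce the topological type of $\eta$ from the chain $\lambda\prec\eta\prec\tau$ and Remark~\ref{fullnecessary} rather than reading it off directly from the $\epsilon$-following construction as the paper does, and you make explicit the final appeal to Lemma~\ref{splitinv}, which the paper leaves implicit; both are legitimate and the extra detail does no harm.
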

\begin{proof}
By construction, if $\lambda$ is large, then for sufficiently small
$\epsilon$ and sufficiently large $a>0$, an $a$-long train track $\eta$
which $\epsilon$-follows $\lambda$ is of the same topological type as
$\lambda$. Furthermore,
$\eta$ carries a minimal  large geodesic
lamination of the same topological type as $\eta$ and hence
$\eta$ is fully recurrent and transversely recurrent.

If $\lambda$ is carried by a large train track $\tau$
then for sufficiently small $\epsilon>0$ and sufficiently
large $a>0$, $\eta$ is carried by $\tau$ (see Section 3 of \cite{H09}).
Then a large geodesic lamination which is carried by
$\tau^*$ is carried by $\eta^*$ and hence $\eta$ is large as claimed.
\end{proof}

\section{Components  of strata: The principal boundary}\label{sec:components}

In Section 3, a combinatorial
model for every component ${\cal Q}$ of a stratum in 
${\cal Q }(S_{g,n})$ or in ${\cal H}(S_{g,n})$ was constructed.
The purpose of this section is to refine this construction
and obtain models for 
specific types of degenerations of the
stratum.

We begin with introducing the degenerations we
are interested in. 
The framework for these degenerations is in the spirit of 
the  ``you see what you get''
partial compactification of strata introduced in \cite{MW15}.
Although we will not make use of this work,
we reproduce Definition 2.2 of \cite{MW15}.
In its formulation, $\Sigma_i$ is the singular set of 
the quadratic differential $q_i$ on the Riemann surface $X_i$.

\begin{definition}\label{degeneration}
Say that $(X_j,q_j,\Sigma_j)$ \emph{converges} to 
$(X,q,\Sigma)$ if there are decreasing neighborhoods $U_j\subset X$
with $\cap U_j=\Sigma$ such that the following holds.
There are maps $g_j:X-U_j\to X_j$ that are diffeomorphisms
onto their range, such that
\begin{enumerate}
\item $g_j^*(q_j)$ converges to $q$
in the compact open topology on $X-\Sigma$.
\item The injectivity radius at points not in the image of $g_j$
goes to zero uniformly in $j$.
 \end{enumerate}  
\end{definition}

With this definition, we allow to erase 
zero area components of a limiting surface with nodes.

There are two specific types of degenerating 
sequences which will 
be used in the sequel.

\noindent
{\bf 1) The shrinking half-pillowcase:} 

Let $q$ be a quadratic differential on $S_{g,n}$.
Choose a singular or marked regular point 
$x$ on $S_{g,n}$ and cut $S_{g,n}$ open along 
a geodesic segment $\alpha$ issuing from $x$ 
of length $s>0$. If $x$ is a singular point of $q$ then we
require that $x$ is the only singular point contained in
$\alpha$, and if $x$ is a regular point then we require
that $\alpha$ does not contain any singular point.
The cut open surface has a geodesic circle as boundary. Glue 
a foliated cylinder $C$ to this circle 
whose opposite boundary is divided into two arcs of the same 
length which are identified to form half of a pillowcase
as shown in Figure C.
\begin{figure}[ht]
\begin{center}
\psfrag{Figure E}{Figure E} 
\includegraphics 
[width=0.7\textwidth] 
{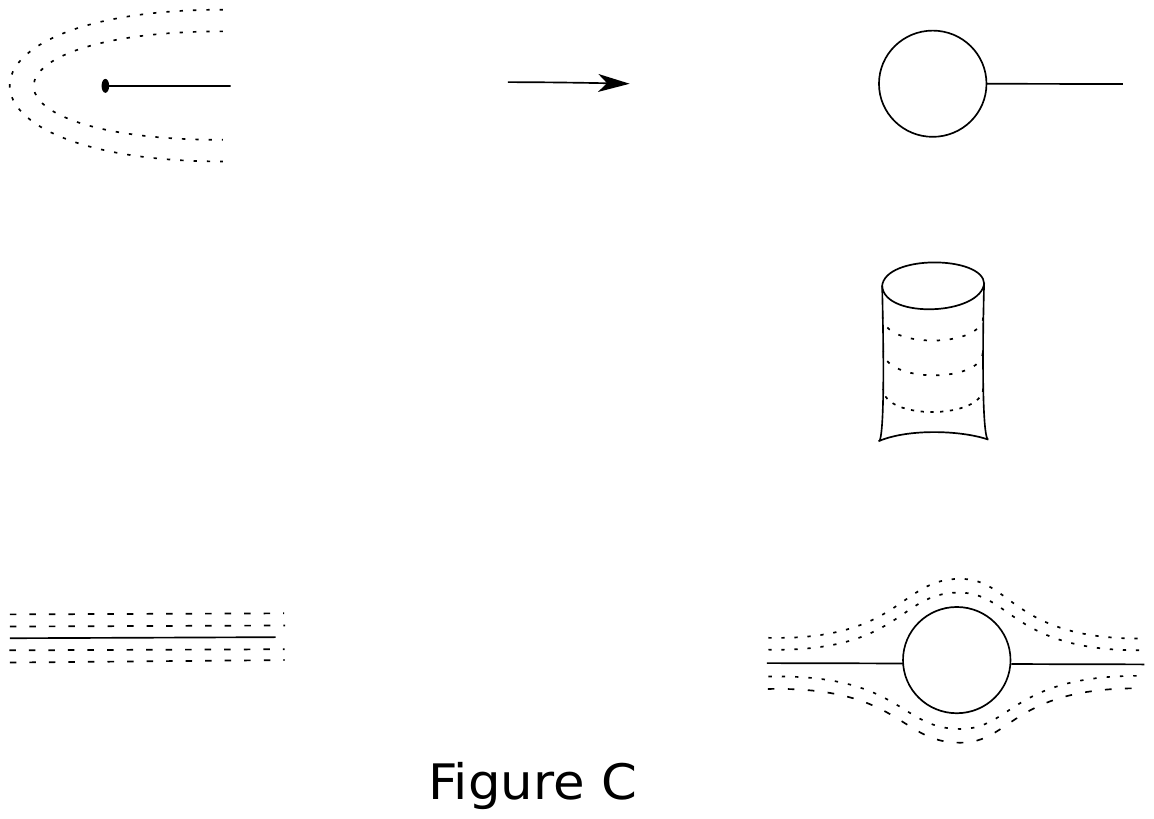}
\end{center}
\end{figure}
This does not change the genus of $S_{g,n}$, but it adds
two punctures to $S_{g,n}$, and it increases the cone angle 
of each of the endpoints of $\alpha$ by $\pi$. Note that 
for the fixed point $x$, the 
half-pillowcase is described by 4 real parameters:
The direction and the length of its cutting arc $\alpha$, 
the height of the cylinder and the position 
of one of the simple poles on the top of the half-pillowcase, which
is determined by the choice of a point on the 
(oriented) cutting arc $\alpha$.

We call a sequence of quadratic differentials
containing a half-pillowcase whose circumferences and heights tend to zero and 
which degenerate in the sense of Definition \ref{degeneration}
to the surface with the half-pillowcase removed a
\emph{shrinking half-pillowcase}.
The areas of the half-pillowcases, that is,
the products of their circumferences and 
heights, tend to zero.

\noindent
{\bf 2) The shrinking cylinder:}

Let again $q$ be a quadratic differential defining a flat metric on $S_{g,n}$.
Given two singular or marked regular points $x_1\not=x_2$ on $S_{g,n}$, cut
the surface $S_{g,n}$ open along
two geodesic arcs of the same length and the same direction starting at
$x_i$ 
with no singular point in the interior,
and glue the two 
boundary circles to the two boundary circles of a flat cylinder. 
The result is a singular flat metric on the surface $S_{g+1,n}$. 
The core curve of the attached cylinder in $S_{g+1,n}$ is
non-separating, and the direction of the geodesics in its core
equals the direction of the cutting arcs used in the construction.
We leave it to the reader to check that this construction depends again on 
4 real parameters (this fact is not needed in the sequel, see also
\cite{EMZ03}). 

There is a modification of this construction as follows.
Cut the surface $S_{g,n}$ open along a single geodesic arc
$\alpha$ issuing from a singular or marked regular point $x$ 
with at most one singular point on the boundary
and no interior singular point. Identify the endpoints of
$\alpha$; the resulting surface has two geodesic
boundary circles of the same length and the same direction.
Glue the boundary of a flat cylinder to these two boundary
circles. As before, the result of this construction is 
a singular flat metric on the surface $S_{g+1,n}$.

We call a sequence of flat surfaces containing a nonseparating cylinder
which degenerate to a surface with nodes by
shrinking the width and the heights of the cylinder to zero a
\emph{shrinking cylinder}. Note that the constructions 
discussed above change the area of the singular flat metric,
which can be corrected with the usual area renormalization.

Our goal is to construct
combinatorial models for quadratic or abelian 
differentials which capture 
these two types of 
degenerations to differentials on surfaces with nodes.
These models will then be used to 
construct periodic orbits of the Teichm\"uller flow 
in the thin part of moduli space.
Assume from now on that
$3g-3+n\geq 5$. This rules out spheres with at most $7$ punctures,
tori with at most $4$ punctures and a surface of genus $2$ with 
at most $1$ puncture.

\begin{definition}\label{basiccurve}
An essential simple closed curve $c$ on 
$S_{g,n}$ is called \emph{elementary} if either 
\begin{enumerate}
\item[a)]
$c$ is non-separating or 
\item[b)]
$n\geq 2$ and $c$ decomposes
$S_{g,n}$ into a surface $S_{g,n-1}$ and a twice punctured disk.
\end{enumerate}

An \emph{elementary pair} is a 
pair $(c_1,c_2)$ consisting of disjoint elementary curves
$c_1,c_2$ on $S$.
If both $c_1$ and $c_2$ are non-separating then we require that
$S_{g,n}-(c_1\cup c_2)$ is connected.
\end{definition}

Since $3g-3+n\geq 5$ by assumption, the complement 
$S_{g,n}-(c_1\cup c_2)$ of  
an elementary pair in $S=S_{g,n}$ contains a (unique) component 
which is not a three holed sphere.
In the sequel we tacitly identify a complementary
component of a curve $c$ in $S$ 
(or any curve system) with its metric completion,
that is, we view $S-c$ as a surface with boundary. 


\begin{definition}\label{primitivevertex}
A \emph{primitive vertex cycle} for 
a large train track $\tau$
is a simple closed curve $c$ embedded in $\tau$ which
consists of a large branch and a small branch. 
\end{definition}

If $c$ is a primitive vertex cycle in $\tau$ then there are 
two half-branches incident on the two
switches of $\tau$ in $c$ which are not contained in $c$.
Since $\tau$ is large by assumption, these two half-branches
lie on the two different sides of $c$ in an annulus
neighborhood of $c$ in $S$.
Namely, otherwise there is a complementary
component of $\tau$ containing a simple closed curve 
which
is neither contractible nor homotopic into a puncture. 

We call a
primitive vertex cycle of a large train track 
$\tau$ \emph{clean} if its underlying simple
closed curve $c$ is elementary and if moreover a branch
$b$ which is incident on a switch in $c$ and which
is not contained in $c$ 
satisfies one of the two following conditions.
\begin{enumerate}
\item $b$ is a small branch.
\item $n\geq 2$, $c$ is separating and 
$b$ is contained in the twice punctured disk component of $S-c$.
\end{enumerate}

As there are two types of elementary curves, there
are two types of clean vertex cycles. To relate these
types to the degeneration of quadratic differentials, 
note that removing from $\tau$ a clean vertex cycle $c$ as well as 
all the branches of $\tau$ adjacent to $c$ and branches
in a $3$-holed sphere component of $S-c$ 
yields a
train track $\tau^\prime$ 
on the complementary component $S_0$ of 
$S-c$ 
which is not a three holed sphere.

\noindent
{\sl Type I:} {\sl The shrinking half-pillowcase.}

If $n\geq 2$ and if $c$ is a separating
clean vertex cycle of $\tau$, then there is 
a complementary component $C$ for the  
train track $\tau^\prime$ on $S_0$
which is an annulus whose core curve is homotopic to $c$.
There is a component $\gamma$
of $\partial C$ embedded in
$\tau^\prime$, and $\gamma$ 
contains at least one cusp of $\tau^\prime$ (since otherwise  
$\tau$ has a complementary
component which is a bigon).

Let $S_0^\prime$ be the surface obtained from $S_0$ by
replacing the boundary circle of $S_0$ 
by a marked point (puncture).
The genus of $S_0^\prime$ coincides with the genus of $S$, and the
number of marked points has decreased by one. 
If the component $\gamma$ of $\partial C$  
contains at most two cusps, 
then $\tau^\prime$ is a large train track on 
$S_0^\prime$, where the puncture replacing the curve $c$ may be 
a marked regular point. 
By  Section \ref{strata}, $\tau^\prime$ 
determines a component of a stratum 
of differentials on $S^\prime=S_0^\prime$.  
Passing from $S^\prime$ back to $S$ 
corresponds to a shrinking half-pillowcase obtained by
cutting $S^\prime$ open along a geodesic segment
$\alpha$ for the flat
metric defined by a quadratic differential on $S^\prime$ 
starting at a simple pole or a marked regular point and not 
containing any other singular points.
If $\gamma$ contains at least three cusps then 
remove from $S_0^\prime$ the marked point enclosed by $\gamma$
and denote the resulting surface by $S^\prime$. 
In both cases, $\tau^\prime$ is a large train track on 
the surface $S^\prime$.

\noindent
{\sl Type II:} The shrinking cylinder. 

If $c$ is a non-separating clean vertex cycle of $\tau$ 
then both branches incident on $c$ are small.
The genus of the surface with boundary 
$S_0$ obtained by cutting $S$ open
along $c$ equals $g-1$. Let $S_0^\prime$ be obtained from 
$S_0$ by replacing the two boundary circles (which are 
copies of $c$) by a marked point. 
As before, $\tau^\prime$ defines a large train track 
on the surface $S^\prime$ which either coincides with $S_0^\prime$
or is obtained from $S_0^\prime$ by removing one or both of the 
special marked points. These choices depend on the
number of cusps of the complementary components
of $c$ in $\tau^\prime$.

Let as before ${\cal Q}$ be a component of a stratum of quadratic or
abelian differentials on $S$.
Call a  train track $\tau$ \emph{in special form for ${\cal Q}$}
if $\tau\in {\cal L\cal T}({\cal Q})$ 
and 
if there is an elementary pair $(c_1,c_2)$ for $S$ with
the following additional property. 
\begin{enumerate}
\item[$(*)$] $\tau$ contains
each of the curves $c_1,c_2$ as a clean vertex cycle, and 
the graph obtained by removal of $c_1,c_2$ and their 
adjacent branches as well as all branches contained in 
a 3-hold sphere component of $S-(c_1\cup c_2)$ 
is a large train track on a subsurface of $S$

\end{enumerate}

The rest of this section is devoted to the construction 
of train tracks in special form for all components of 
strata of abelian or quadratic differentials 
on $S_{g,n}$ without marked regular points provided that 
$3g-3+n\geq 5$.

For simplicity, write
${\cal Q}(m_1,\dots,m_\ell;-n)$ instead of 
${\cal Q}(m_1,\dots,m_\ell;-n,0)$,
and write ${\cal H}(k_1,\dots,k_s)$ instead of 
${\cal H}(k_1,\dots,k_s;0)$.
Motivated by the strategy in \cite{KZ03} and \cite{L08}, 
the idea is to start with explicit train tracks for 
components of strata on surfaces with small complexity 
and uses these train tracks as building blocks for the 
construction of 
train tracks for all strata. The first type of modification
consists in subdividing complementary components as follows.

Let $C$ be a complementary component of a train track 
$\eta$ which is a disk with 
$k\geq 4$ cusps on its boundary $\partial C$. 
Then $C$ can be subdivided
into two components by adding a small branch which connects
two non-adjacent sides of the component. 
The resulting train track $\tau$ is a simple extension of $\eta$ as defined
in Section 2.
If $\eta$ is orientable and
if the number of cusps of $\partial C$ at least six, then this
subdivision can be done 
in such a way that the components have
an even number of cusps and that $\tau$ is orientable as well.
By Proposition \ref{simpleex}, 
if $\tau,\eta$ are either both orientable or 
both non-orientable then
$\tau$ is large if and only if this holds true for $\eta$.
In the sequel we always choose subdivisions of complementary
components of orientable train tracks in such a way that
the resulting train track is orientable.

Following \cite{L08}, strata of quadratic differentials with at least
three simple poles are connected. 
We use this fact to observe

\begin{lemma}\label{spheretorus}
Components of strata of differentials
on the two-sphere $S^2$ with $n\geq 7$ simple poles 
or on the two-torus $T^2$ with $n\geq 4$ simple poles admit a train track
in special form.
\end{lemma}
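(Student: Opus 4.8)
The statement asserts the existence of a train track in special form (in the sense defined just above the lemma) for the connected strata of quadratic differentials on $S^2$ with $n\geq 7$ simple poles and on $T^2$ with $n\geq 4$ simple poles. Since, by Lanneau \cite{L08}, these strata are connected, it suffices to exhibit \emph{one} large train track $\tau$ of the correct topological type containing an elementary pair $(c_1,c_2)$ of clean vertex cycles with property $(*)$: indeed, Proposition \ref{structure} shows any such $\tau$ determines a component of the (unique) stratum, and Lemma \ref{splitinv} together with Corollary \ref{followgood} give us the flexibility to move inside ${\cal L\cal T}(\tilde{\cal Q})$. So the whole lemma reduces to a finite, explicit construction.

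First I would treat the torus case ${\cal H}$-free, i.e. ${\cal Q}(1,\dots,1;-n)$ on $T^2$ with $n\geq 4$ poles; here $3g-3+n=n\geq 4$, but we need $3g-3+n\geq 5$ for the ambient results, so in fact one should read $n\geq 5$ — I would check the indexing against the standing hypothesis $3g-3+n\geq 5$ imposed just before Definition \ref{basiccurve}, and if $n=4$ is genuinely included, note the stratum on $T^2$ with $4$ poles still has $6g-6+2n=8$-dimensional ${\cal M\cal L}$ and the construction below still applies verbatim. The idea is to start from a standard large (in fact here non-orientable, because of the once-punctured monogons) train track $\eta_0$ carrying a minimal arational lamination on the torus with, say, two punctures removed — a surface of small complexity for which an explicit picture is available — and then build up to $n$ punctures by the two modification moves described in the paragraphs preceding the lemma: subdividing a complementary $k$-gon with $k\geq 4$ by inserting a small branch (a simple extension, preserving largeness by Lemma \ref{simpleex}), and attaching the "shrinking half-pillowcase" / "shrinking cylinder" gadgets of Type I and Type II which create a clean vertex cycle. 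Concretely I would: (a) fix a non-separating simple closed curve $c_1$ on $T^2$ and realize it as a primitive vertex cycle of a large train track $\tau_1$ on $T^2$ minus one puncture whose complement of $c_1$ (after deleting adjacent branches) is a large train track on the lower-complexity surface; this is exactly the Type II picture; (b) inside that lower-complexity surface pick a second elementary curve $c_2$ disjoint from $c_1$ — for the torus we can take $c_2$ another non-separating curve with $T^2-(c_1\cup c_2)$ connected, or, using hypothesis $n\geq 2$ on the number of punctures, a curve of type (b) in Definition \ref{basiccurve} cutting off a twice-punctured disk — and install it as a second clean vertex cycle the same way; (c) iterate the subdivision move to install the remaining simple poles (once-punctured monogons) until the topological type is exactly $(1,\dots,1;-n)$, being careful that each inserted small branch lands in a complementary component \emph{not} meeting $c_1\cup c_2$ so property $(*)$ survives. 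The sphere case ${\cal Q}(1,\dots,1;-n)$ with $n\geq 7$ is identical in spirit but one cannot use non-separating curves (genus $0$), so both $c_1,c_2$ must be of type (b): each cuts off a twice-punctured disk, and $S^2-(c_1\cup c_2)$ must have a component that is not a three-holed sphere, which forces $n\geq 7$ (two twice-punctured disks use $4$ punctures, the middle piece needs at least $3$), matching the hypothesis exactly. I would draw the genus-$0$ base picture — a "caterpillar" of trivalent switches along the sphere with once-punctured monogons hanging off — and verify by the Euler-characteristic count (each cusp contributes $-1$, cf. the formula from \cite{PH92} used in Proposition \ref{all}) that all complementary regions are monogons-around-punctures or trigons, i.e. the type is $(1,\dots,1;-n)$ and $\tau$ is maximal hence fully recurrent, and that its dual bigon track $\tau^*$ carries a minimal large lamination so $\tau$ is large.

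The main obstacle, and the only place real work is needed, is verifying \textbf{largeness} — i.e. full recurrence and full transverse recurrence of the explicitly drawn $\tau$ — rather than mere recurrence, and simultaneously verifying the \emph{cleanness} of the two designated vertex cycles in the precise technical sense of the definition (that a branch incident to $c_i$ but not in $c_i$ is either small, or lies in the twice-punctured-disk component when $c_i$ is separating). Full recurrence requires producing a minimal large geodesic lamination in ${\cal L\cal L}(1,\dots,1;-n)$ carried by $\tau$; the clean way to do this is to exhibit $\tau$ as an $a$-long train track $\epsilon$-following such a lamination via Lemma 3.2 of \cite{H09} and Corollary \ref{followgood}, or, more hands-on, to split a known maximal recurrent train track on the punctured sphere/torus repeatedly and check the split track stays fully recurrent — the subtlety flagged in the text is that a split can destroy full recurrence, so the splitting directions must be chosen compatibly with the carried lamination. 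For full transverse recurrence one exhibits a positive tangential measure on $\tau$, which for these small explicit $\tau$ is a finite linear-feasibility check. I would do this once carefully for the sphere base case and once for the torus base case, then observe that both modification moves (simple extension, and the Type I/II attachments) preserve largeness and cleanness by the lemmas already proved (Lemma \ref{simpleex}, Lemma \ref{splitinv}) plus the explicit local picture of the attachment, so that the final $\tau$ satisfies $(*)$ and lies in ${\cal L\cal T}({\cal Q})$. The bookkeeping — that the counts $\sum m_i = 4g-4+p_1$, $p_1+p_2=n$ come out to the all-ones stratum with $n$ simple poles and no regular marked points — is routine given the pictures.
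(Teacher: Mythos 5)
Your overall strategy matches the paper's: both use connectedness of these strata from \cite{L08} to reduce to exhibiting a single explicit train track in special form, then deform by modification moves. However, there is a genuine confusion in step (c) of your construction. You propose to ``iterate the subdivision move to install the remaining simple poles (once-punctured monogons) until the topological type is exactly $(1,\dots,1;-n)$.'' Subdivision of a complementary polygon inserts a small branch and increases the number of complementary \emph{disks} (i.e.\ zeros), not the number of punctures; the ambient surface is fixed by an isotopy class of train track, so you cannot create additional simple poles this way. The paper handles the passage from the base value of $n$ to larger $n$ by a different move: it attaches to the already-constructed train track a copy of a circle enclosing two (new) punctures with a once-punctured monogon inside. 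This changes the underlying surface $S_{g,n}$ to $S_{g,n+2}$ and hence genuinely adds poles. Subdivision is then used, separately, only to pass from the single-zero stratum $(4g-4+n;-n)$ to strata with several zeros — which you do not address explicitly. You should therefore split your step (c) into two distinct operations: the circle-with-monogon attachment for increasing $n$, and simple extension (Lemma~\ref{simpleex}) for diversifying the zero structure, checking separately that each preserves both largeness and the special-form property $(*)$.

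A second, softer, gap: you defer the base-case verification (both largeness and cleanness of the two designated vertex cycles) to Lemma 3.2 of \cite{H09} and Corollary~\ref{followgood}, or to ``splitting a known maximal recurrent train track.'' The paper instead exhibits the base cases by explicit pictures (Figures D and E for $S_{0,6},S_{0,7}$ and $S_{1,3},S_{1,4}$) and notes directly that the $n=6$ (resp.\ $n=3$) picture is \emph{not} in special form while the $n=7$ (resp.\ $n=4$) one is, the point being that removal of $c_1,c_2$ and the adjacent material must leave a large train track on a legitimate subsurface. Your approach via $\epsilon$-following a lamination would work in principle, but you would still need to verify that the resulting $a$-long train track carries $c_1$ and $c_2$ as \emph{clean primitive vertex cycles} with property $(*)$, which does not follow from Corollary~\ref{followgood} alone and in practice requires drawing the picture anyway. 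Your dimension-count rationale for why $n\geq 7$ is forced on the sphere is correct and does match the paper's construction. The worry you raise about $n=4$ on the torus versus the standing hypothesis $3g-3+n\geq 5$ is resolved by the remark immediately following the lemma: the lemma is deliberately stated with the weaker bound $3g-3+n\geq 4$, which is all that is needed for the base of the induction in Proposition~\ref{primitive}.
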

\begin{proof} 
Figure D shows large train tracks with at least two clean
vertex cycles $c_1,c_2$ on $S_{0,n}$ for $n=6,7$
and a single complementary component which is not a once-punctured 
monogon. Such a  train track belongs to the stratum 
of meromorphic differentials
on $S=S^2$ with a single zero and  
$6$ or $7$ simple poles since such strata are connected \cite{L08}. 

The train track $\tau$ for the stratum of 
differentials on $S_{0,6}$ with $6$ poles is not 
in special form.
Namely, removal of the
vertex cycles $c_1,c_2$ 
as well as all adjacent 
branches and all branches in the three-holed sphere 
components of $S_{0,6}-(c_1\cup c_2)$ from $\tau$ 
does not result in a large
train track for a subsurface of $S_{0,6}$.

In contrast, the  train track $\tau$ for the stratum of differentials 
on $S_{0,7}$ with a single zero and 7 poles is in special form: 
The train track obtained from $\tau$ by removal of 
the vertex cycles $c_1,c_2$, all adjacent branches and all branches in the 
three-holed sphere components of $S_{0,7}-(c_1\cup c_2)$ 
is a large train track for the stratum of 
differentials on $S^2$ with 4 simple poles. 

To construct train tracks in special form for strata 
of differentials on $S^2$ with a single
zero and at least 8 simple poles just attach more 
copies of a circle enclosing two punctures and 
containing a once puncture monogon to one of the 
two train tracks shown in Figure D.
\begin{figure}[ht]
\begin{center}
\psfrag{a}{$c_1$}
\psfrag{b}{$c_2$}
\psfrag{c}{$c_3$}
\includegraphics 
[width=0.7\textwidth] 
{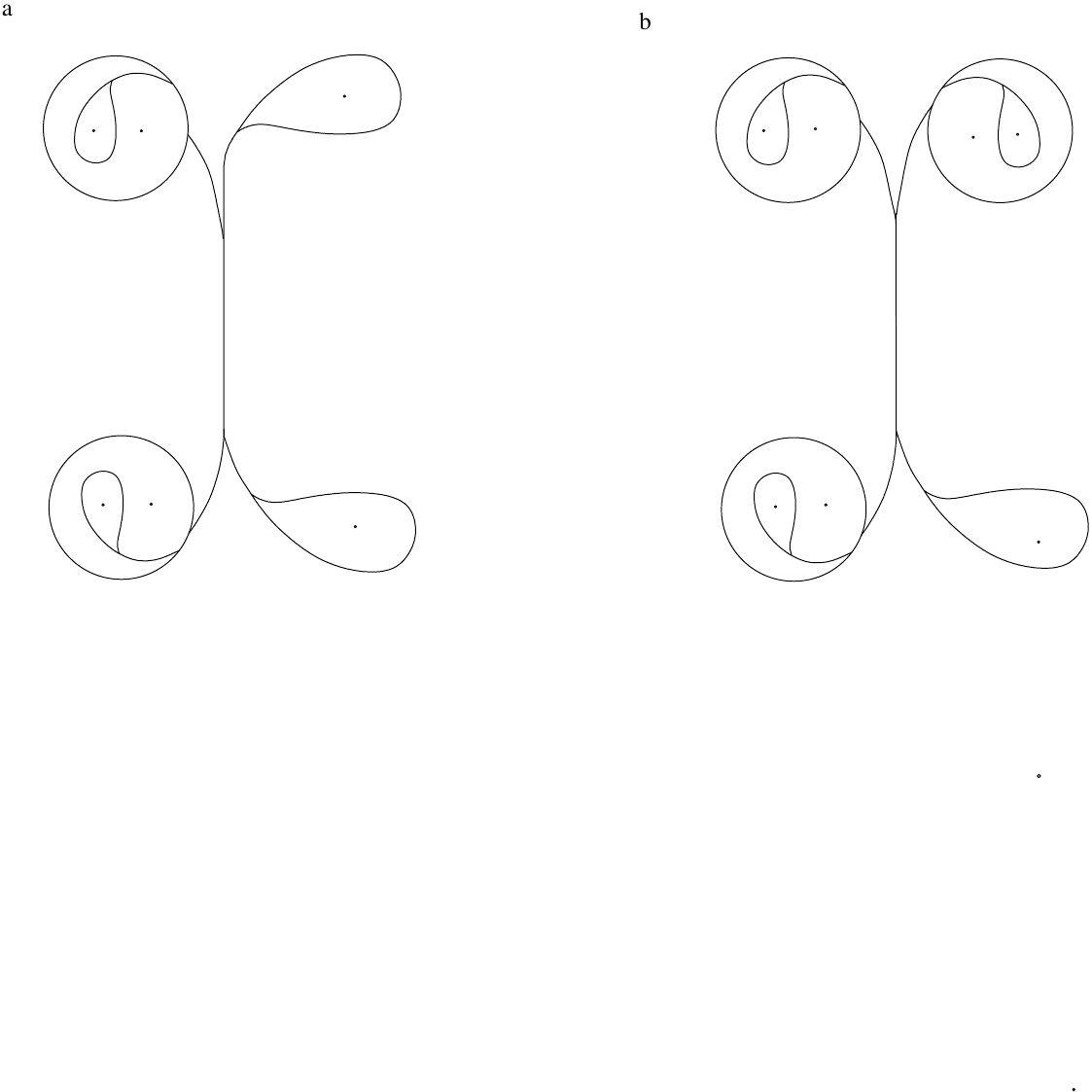}
\end{center}
\end{figure}
Train tracks in special form for arbitrary
strata of differentials on $S^2$
with at least 7 simple poles are obtained from the train tracks
for strata with a single zero by subdivision of complementary
components.

Figure E shows large train tracks 
containing at least two clean vertex cycles for the stratum of differentials
on the torus $T^2$ with a single zero and 
3 or 4 simple poles. As before, the train track for
the stratum with $4$ simple poles is in special form. 
To construct train tracks
in special form for a stratum of differentials on 
a torus with a single zero and 
at least 5 simple poles, attach more copies of
a circle enclosing two punctures and containing a 
once punctured monogon.
\begin{figure}[ht]
\begin{center}
\psfrag{a}{$c_1$}
\psfrag{b}{$c_2$}
\psfrag{c}{$c_3$}
\psfrag{Figure B}{Figure G}
\includegraphics 
[width=0.7\textwidth] 
{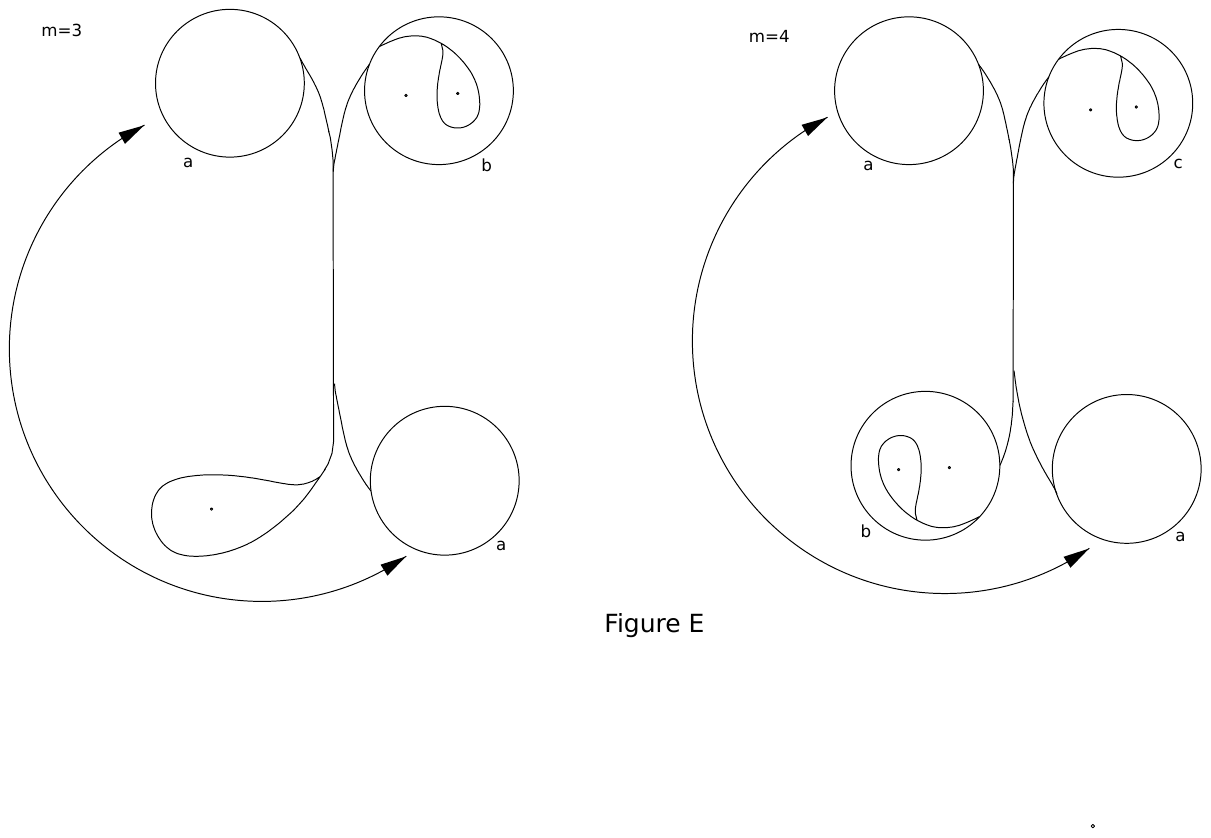}
\end{center}
\end{figure}
As before, train tracks in special form for arbitrary 
strata of differentials on the torus
with at least 4 simple poles are constructed 
from these train tracks by
subdivision of complementary components. 
\end{proof}

\begin{remark} 
Lemma \ref{spheretorus} shows the existence of train tracks
in special form for strata of differentials on $S_{0,n}$ or 
$S_{1,n}$ where $3g-3+n\geq 4$, which is slightly better than
what appears in the statement of Theorem \ref{main}. 
\end{remark}

Call a component of a stratum of abelian or quadratic
differentials \emph{hyperelliptic} if
it consists of differentials on hyperelliptic
surfaces which are invariant under the
hyperelliptic involution.
Lemma \ref{spheretorus} is used to show

\begin{lemma}\label{hyperelliptic}
Let ${\cal Q}$ be 
a hyperelliptic component of a stratum of
quadratic or abelian differentials on a
surface of genus at least 3. Then there is a train track
$\tau$ in special form for ${\cal Q}$.
\end{lemma}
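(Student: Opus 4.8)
The plan is to exploit the well-known description of hyperelliptic components as quotients (or rather covers) of strata of quadratic differentials on punctured spheres, together with Lemma \ref{spheretorus}. Recall that a hyperelliptic component $\mathcal{Q}$ consists of differentials $q$ on a hyperelliptic surface $X$ of genus $g$ that are invariant under the hyperelliptic involution $\sigma$; the quotient $X/\sigma$ is a sphere with $2g+2$ branch points, and $q$ descends to a meromorphic quadratic differential $\hat q$ on $S^2$ whose singular data is determined by that of $q$ (zeros of even order descend to zeros of half the order at non-branch points, while the interaction of the singular set with the $2g+2$ branch points produces simple poles and zeros of controlled orders). In all cases the quotient differential $\hat q$ lies in a stratum of meromorphic quadratic differentials on $S^2$ with a number $m$ of simple poles satisfying $m\geq 2g+2-(\text{something bounded})$, and in particular $m$ is large enough (at least $7$, since $g\geq 3$) that Lemma \ref{spheretorus} applies — or, if the relevant quotient stratum lives on a torus, the torus case of Lemma \ref{spheretorus} applies. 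First I would carry out this bookkeeping carefully: list the hyperelliptic components in genus $\geq 3$ (there are only finitely many families, indexed as in \cite{KZ03} and \cite{L08} by a single zero, or two zeros of equal order, etc.), and in each case identify the quotient stratum on $S^2$ (or $T^2$) and verify the complexity bound $3\cdot 0-3+m\geq 5$ needed to invoke Lemma \ref{spheretorus}.

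The second step is to transport a train track $\hat\tau$ in special form for the quotient stratum back up to $\mathcal{Q}$ along the branched double cover $\pi\colon X\to S^2$. Concretely, I would choose $\hat\tau$ (provided by Lemma \ref{spheretorus}) so that its two clean vertex cycles $\hat c_1,\hat c_2$, its adjacent branches, and its branches in three-holed-sphere complementary pieces avoid all $2g+2$ branch points, and moreover so that each $\hat c_i$ is a non-separating elementary curve on $S^2$ disjoint from the branch locus — this is arranged by choosing $\hat c_i$ to bound, together with its neighboring structure, a complementary region containing exactly one simple pole, i.e. a once-punctured monogon, as in the figures of Lemma \ref{spheretorus}. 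One can always further isotope $\hat\tau$ to be symmetric with respect to the hyperelliptic data by running the construction inside the sphere with the branch points marked. Then $\tau := \pi^{-1}(\hat\tau)$ is a $\sigma$-invariant train track on $X$, fully recurrent and fully transversely recurrent because carrying maps and tangential measures lift, and by Corollary \ref{followgood} (choosing $\hat\tau$ to $\epsilon$-follow a minimal large lamination whose lift is minimal and large) its lift $\tau$ carries a minimal large geodesic lamination of the correct topological type for $\mathcal{Q}$; hence $\tau\in\mathcal{LT}(\tilde{\mathcal Q})$, using Proposition \ref{structure} and the fact that $\sigma$-invariant binding pairs $(\mu,\nu)$ determine differentials in the hyperelliptic component by construction.

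The third step is to check condition $(*)$: the preimages $c_i = \pi^{-1}(\hat c_i)$ ($i=1,2$) form an elementary pair of clean vertex cycles in $\tau$, and removing them, their adjacent branches, and branches in three-holed-sphere pieces yields a large train track on the complementary subsurface. Here one uses that a non-separating simple closed curve on $S^2$ avoiding an even number of branch points — more precisely an $\hat c_i$ enclosing exactly two branch points, so that $\pi^{-1}(\hat c_i)$ is connected and non-separating on $X$ (if $\hat c_i$ encloses two branch points the preimage is a single non-separating curve), or enclosing a complementary region with the parity set up so that the lift is a single curve or a pair of curves swapped by $\sigma$; in either case one verifies directly that the resulting curve(s) on $X$ are elementary and that their complement contains a unique non-pants component. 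The "large train track on a subsurface" conclusion is inherited from the corresponding statement for $\hat\tau$ on $S^2$ by lifting, exactly as carrying and transverse recurrence lift. The main obstacle — and the step I expect to require the most care rather than the most ingenuity — is the precise matching of singular data: one must confirm for each hyperelliptic family in genus $\geq 3$ that the quotient stratum on $S^2$ (or $T^2$) indeed has enough simple poles to apply Lemma \ref{spheretorus}, and that the lift of the train track there has complementary polygons with exactly the cusp counts dictated by the zero orders upstairs (even-order zeros upstairs versus the ramification behavior over branch points). This is a finite case analysis using the Kontsevich–Zorich and Lanneau classifications, but it is where an error would most easily hide.
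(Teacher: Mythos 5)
Your high-level strategy matches the paper's: realize $\mathcal{Q}$ as a pull-back of a stratum $\hat{\mathcal{Q}}$ of quadratic differentials on the punctured sphere via a branched double cover, start from a train track in special form for $\hat{\mathcal{Q}}$ supplied by Lemma~\ref{spheretorus}, and transport it upstairs. (The torus case you allow for does not arise: the hyperelliptic quotient of a surface of genus $g\geq 3$ is always genus $0$.) However, there is a real topological error at the heart of your Step~2/3, and it hides exactly the content of the paper's argument.

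You write that an $\hat c_i$ enclosing exactly two branch points has preimage ``a single non-separating curve.'' This is false. For a double cover of $S^2$ branched at $2g+2$ points, a simple closed curve enclosing two branch points has trivial $\mathbb{Z}/2$ monodromy, so its preimage is \emph{two} disjoint simple closed curves bounding an embedded annulus $A_i$ that branched-double-covers the twice punctured disk. (Also, on $S^2$ every simple closed curve separates, so the $\hat c_i$ are elementary of type (b) in Definition~\ref{basiccurve}, not non-separating.) Consequently $\pi^{-1}(\hat\tau)$ is not a train track with a clean elementary vertex cycle over each $\hat c_i$: it contains, over each $\hat c_i$, this annulus $A_i$ subdivided into two bigons each with a ramification point in its interior, and more generally it has bigon complementary components over every once-punctured monogon of $\hat\tau$ whose marked point is a branch point. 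The paper's proof is precisely the surgery you omit: remove the branches inside $A_i$ together with the ramification marked points, identify the two boundary circles of $A_i$ into a single simple closed curve $v_i$, and collapse the remaining ramification-point bigons to arcs. Only after this surgery does one obtain a train track $\tau$ on $S$ with $v_1,v_2$ non-separating (and not a bounding pair), hence an elementary pair of clean vertex cycles satisfying condition~$(*)$. Your hedge about ``a pair of curves swapped by $\sigma$'' does not repair this, since the definition of special form demands a single embedded clean vertex cycle.

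One further imprecision: the paper invokes Lanneau's theorem \cite{L04} (Theorem~1.2 of \cite{L08}) to guarantee that $\hat{\mathcal{Q}}$ has at least $8$ simple poles and that the cover ramifies at all or all but one of them, which is what allows one to arrange that all branch points of the cover lie inside the two twice punctured disks $P_1,P_2$ and inside once-punctured monogons of $\hat\tau$. Your bound ``$m\geq 2g+2 - (\text{something bounded})$, in particular $m\geq 7$'' does not quite deliver this structural information, and it is needed to carry out the surgery coherently.
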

\begin{proof}
Let ${\cal Q}$ be a hyperelliptic component
of a stratum of quadratic differentials on 
a surface $S$ of genus $g\geq 3$ with $n\geq 0$ simple poles.
Such a hyperelliptic
component is obtained by pull-back of a  stratum $\hat{\cal Q}$ of 
quadratic differentials on the punctured sphere 
$S_{0,m}$ with 
a double branched covering map.

By the main result of \cite{L04} (see also 
Theorem 1.2 of \cite{L08}), 
the component $\hat{\cal Q}$ consists of differentials with 
$n\geq 8$ simple poles, and the cover is ramified at 
all or at all but one of the poles.

Let $\eta$ be a train track in special form 
for $\hat {\cal Q}$ 
as constructed in Lemma \ref{spheretorus},
with two clean vertex cycles $c_1,c_2$. 
The vertex cycles 
$c_1,c_2$ 
cut from the punctured sphere two twice punctured disks $P_1,P_2$.

Choose the branched covering in such a way that it is
ramified at each of the punctures in $P_1,P_2$.
The preimage of $\eta$ under this covering is 
an embedded graph $\hat \eta$ in the surface $S$. 
The preimage of the vertex cycle $c_i$
consists of two embedded 
simple closed curves which bound an
embedded annulus $A_i$. The annulus $A_i$ contains 
the preimage in $\hat \eta$ of the two
punctures in the 3-holed sphere component of  $S_{0,m}-c_i$
as shown in the
middle part of Figure F. It 
is subdivided by the preimage of $\eta$
into two bigons with an interior marked point.
The marked point is a preimage of one of the ramification points.
Remove these marked points and 
the branches 
in the interior of the annulus $A_i$ and identify the two 
boundary circles of $A_i$ as shown in Figure F so that
they form a single 
simple closed curve $v_i$ $(i=1,2)$.
\begin{figure}[ht]
\begin{center}
\includegraphics 
[width=0.7\textwidth] 
{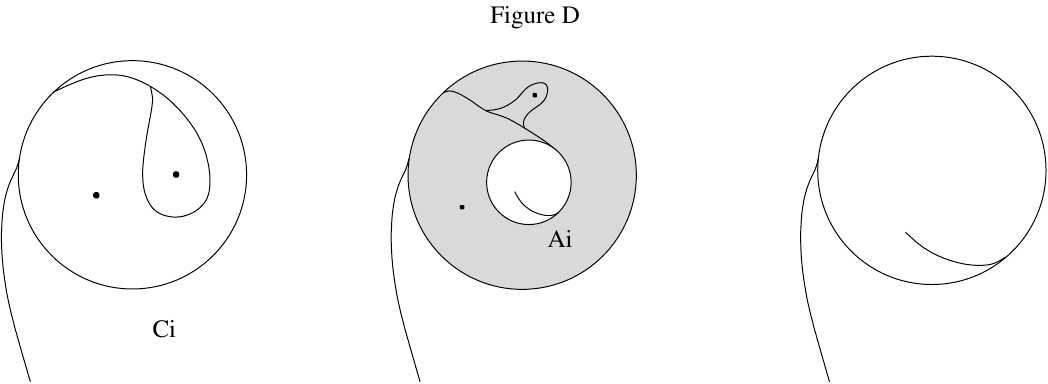}
\end{center}
\end{figure}

Collapse each remaining bigon in $\hat \eta$ 
containing a single preimage of a ramification point 
to a single arc as described above. The resulting graph $\tau$ 
is a large train track on $S$ 
which is contained in ${\cal L\cal T}({\cal Q})$. 
The train track
$\tau$ contains the curves $v_i$ 
as primitive vertex cycles. By construction, the
curves $v_i$ are non-separating and do not form a 
bounding pair and hence they define an elementary
curve system.
Moreover, since $c_i$ is a clean primitive vertex cycle for 
$\eta$, the primitive vertex cycle  
$v_i$ is clean for $\tau$. As the train track $\eta$ for $\hat {\cal Q}$
was in special form, naturality of the construction implies that 
$\tau$ is in special form for ${\cal Q}$.

The same reasoning also applies for hyperelliptic
components of abelian differentials. 
Namely, in this case the branched cover defining
the component is ramified at each of the simple
poles on the two-sphere and thus it is ramified at at least $8$ simple
poles. The above argument then shows that there
is a train track in special form for the 
component.
\end{proof}

To treat non-hyperelliptic components 
we construct from a large train track $\eta$ of
topological type $(m_1,\dots,m_\ell;-n)$
on a surface of genus $g\geq 0$ with $n$ punctures a 
train track $\tau$ of type $(m_1,\dots,m_\ell +4;-n)$ on a surface
of genus $g+1$ by \emph{attaching a handle} as follows.
 
The train track $\eta$ has a complementary  
polygon $P$ with $m_\ell +2$ sides. Attach two arcs
$b_1,b_2$ of class $C^1$ 
to the interior of two branches of $\eta$ which
are contained in two different sides of the polygon $P$ in such
a way that $b_1,b_2$ are disjoint and embedded in $P$.
Attach a simple closed curve $c_i\subset P$ of class $C^1$
to the arc $b_i$ which meets $b_i$ only at its free
endpoint and is tangent to $b_i$ $(i=1,2)$.
We require that the curves $c_1,c_2$ 
are disjoint and bound disjoint
embedded disks $D_1,D_2$ in the interior of $P$.

Remove the interiors of the disks $D_1,D_2$ from $P$. The 
boundary of the resulting surface consists of the curves
$c_1,c_2$. Glue $c_1$ to $c_2$ with a diffeomorphism
which reverses the boundary orientation of $D_i$. 
The result is a surface of genus $g+1$ with $n$ punctures which 
carries a train track $\tau$ of topological 
type $(m_1,\dots,m_\ell+4;-n)$. 
It contains the image of the curves $c_i$ under
a gluing map as a clean vertex cycle. 
Note that if $\eta$ is orientable, then for a suitable
choice of the arcs $b_1,b_2$, the train track $\tau$ is
orientable as well. In the sequel we always assume that 
the construction preserves orientability if applicable. 
We then call $\tau$ the
train track obtained from $\eta$ 
by \emph{attaching a handle}.

\begin{lemma}\label{handlebubble}
The train track $\tau$ obtained 
from $\eta$ by attaching a handle
is large. Moreover, it is orientable if and only if
this holds true for $\eta$.
\end{lemma}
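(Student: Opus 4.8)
The plan is to verify the two claims --- largeness and the orientability dichotomy --- essentially by tracking how carrying relations and the combinatorial invariants behave under the handle-attachment operation. First I would establish the orientability statement, since it is purely local. The train track $\tau$ differs from $\eta$ only inside the complementary polygon $P$: we have added the two small arcs $b_1,b_2$, the two loops $c_1,c_2$, and then identified $c_1$ with $c_2$. If $\eta$ is orientable, fix a consistent orientation of its branches; by the construction (as stated, we are free to choose $b_1,b_2$ appropriately) the new branches can be oriented so that the switch condition on orientations --- the orientation of the large half-branch at each switch extends over the two small half-branches --- continues to hold at the two new trivalent switches where $b_i$ meets $\eta$, and the gluing diffeomorphism $c_1\to c_2$ is chosen to respect these orientations; hence $\tau$ is orientable. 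Conversely, $\eta$ is obtained from $\tau$ by deleting the small branches $b_1,b_2$ and the vertex cycle coming from $c_1=c_2$, i.e. $\eta$ is a subtrack of a shift of $\tau$ (up to the handle surgery), so an orientation of $\tau$ restricts to one of $\eta$. This gives the ``if and only if''.

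For largeness I would invoke Definition \ref{largett}: I must show $\tau$ is fully recurrent (its carries a minimal large geodesic lamination of type $(m_1,\dots,m_\ell+4;-n)$) and fully transversely recurrent (its dual bigon track $\tau^*$ carries such a lamination). For full recurrence, start from a minimal large geodesic lamination $\nu\in {\cal L\cal L}(m_1,\dots,m_\ell;-n)$ carried by $\eta$, witnessing that $\eta$ is fully recurrent. The complementary polygon $P$ of $\eta$ that becomes an $(m_\ell+4)$-gon after attaching the handle is where all the modification happens; I would construct from $\nu$ a geodesic lamination $\hat\nu$ on the genus-$(g+1)$ surface by threading leaves through the handle --- concretely, one adds finitely many leaves passing through $b_1$, around and back through $b_2$, wrapping once through the glued curve $c_1=c_2$ --- in such a way that the resulting lamination is carried by $\tau$, fills the new surface, is minimal, and has the prescribed complementary-region type (the polygon $P$ of type $m_\ell+2$ is replaced by one of type $m_\ell+4$ because two extra cusps are created at the feet of $b_1,b_2$, and the handle adds no new complementary disks). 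Minimality and largeness (Hausdorff-approximability by simple closed curves) follow because a minimal filling lamination on a subsurface, extended across a handle by finitely many additional leaves that connect it up, remains minimal and filling; alternatively one realizes $\hat\nu$ as carried by $\tau$ and applies Lemma \ref{polyhedron}(3) together with the density of rational (multicurve) measures to approximate it by simple closed curves carried by $\tau$, checking these fill the new surface.

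For full transverse recurrence, the cleanest route is to argue on the dual side. Since $\eta$ is large, $\eta^*$ carries a minimal large geodesic lamination, equivalently (by the discussion after Definition \ref{largett}) $\eta$ is transversely recurrent: it admits a tangential measure positive on every branch, equivalently a measured lamination hitting $\eta$ efficiently whose support fills $S$. I would extend such a tangential measure across the handle: assign positive tangential weights to $b_1,b_2$ and to the arc of $c_1=c_2$, small enough that the polygon inequalities (conditions (1) and (2) in the definition of tangential measure) continue to hold in the new $(m_\ell+4)$-gon and in all complementary regions touched by the surgery --- the once-punctured monogons impose no constraint, and the new region is a disk with $m_\ell+4\geq 6$ cusps so the inequalities are open conditions satisfied for suitably chosen weights. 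This produces a positive tangential measure on $\tau$, so $\tau$ is transversely recurrent; upgrading to the ``fully'' version --- that $\tau^*$ carries a lamination of the right topological type --- uses, as in the proof of Lemma \ref{containedinstratum} and Proposition \ref{structure}, the density (from \cite{KMS86}) of minimal filling laminations of the predicted topological type among laminations hitting $\tau$ efficiently. I expect \textbf{the main obstacle} to be precisely this last point: showing that the extended lamination (on either the tangent or the tangential side) is not merely filling and carried, but genuinely \emph{minimal} and of \emph{exactly} the topological type $(m_1,\dots,m_\ell+4;-n)$ --- that the handle-threading does not accidentally create a splitting of the lamination into several minimal components or an unexpected extra complementary disk. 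Handling this requires a careful count of cusps created at the feet of $b_1,b_2$ and a connectivity argument showing the added leaves link up all of $\nu$ with the handle, and it is where one must be most careful about the precise geometry of the construction in Figure (the handle figure).
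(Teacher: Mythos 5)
Your proof takes a genuinely different route from the paper's, and the difference is illuminating. For the full-recurrence half, you propose threading the original lamination $\nu$ through the handle to manufacture a single \emph{minimal} large lamination carried by $\tau$, and you correctly identify the minimality of the resulting lamination as the main obstacle. The paper avoids this difficulty entirely by not aiming for minimality at this stage: it takes the minimal large lamination $\lambda_0$ carried by $\eta$ (now carried by $\tau\supset\eta$), adds the new primitive vertex cycle $c_0$ coming from the glued curves $c_1=c_2$, and then adds two \emph{isolated} leaves passing through $b_1,b_2$ that spiral onto $\lambda_0$ on one side and onto $c_0$ on the other. The union $\lambda_0\cup c_0\cup\{\text{two isolated leaves}\}$ is a large geodesic lamination of the correct topological type carried by $\tau$, but it has three minimal pieces plus isolated leaves; nowhere does the paper try to link $\lambda_0$ to the handle by leaves that stay recurrent, which is the step you (rightly) flag as delicate. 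The same construction is invoked verbatim on the dual side, whereas you instead extend a positive tangential measure across the handle; both should work but your version is more constructive and arguably easier to verify.

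One remark worth making: the paper's proof as stated produces a large lamination that is not minimal, while Definition \ref{deffullyrec} asks $\tau$ to carry a \emph{minimal} large lamination of the given type. The passage from ``carries a large lamination of type $(m_1,\dots,m_\ell+4;-n)$'' to ``carries a minimal large one of that type'' is left implicit in the paper (it follows from the genericity of minimal filling laminations among those carried by a train track of the right type, in the spirit of the discussion around Proposition \ref{structure} and the references to \cite{KMS86}). So your instinct to worry about minimality is sound; the paper and your proposal just place the burden in different places. If you keep your approach, the cleanest fix is to not insist that the extended lamination $\hat\nu$ itself be minimal: instead, show that $\tau$ carries \emph{some} large lamination of type $(m_1,\dots,m_\ell+4;-n)$ (non-minimal is fine, e.g.\ by the paper's spiraling-leaves construction) and then appeal to the standard density argument for minimal ones. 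On the orientability claim your argument matches the paper's, and the ``only if'' direction via restricting an orientation of $\tau$ to the subtrack $\eta$ is correct.
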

\begin{proof} 
By construction, 
the train track $\tau$ is orientable
if and only if this holds true for $\eta$.
Moreover, $\eta$ can be viewed as a subtrack of $\tau$. 

Now $\eta$ is a large train track and hence 
it carries a minimal
large geodesic lamination of type $(m_1,\dots,m_\ell;-n)$. 
This geodesic lamination defines a minimal geodesic lamination
$\lambda_0$ carried by $\tau$. The train track $\tau$ contains a 
primitive vertex cycle $c_0$ which is disjoint from 
$\lambda_0$ and which is the image of the curves 
$c_1,c_2$ under the glueing process. The union 
$\lambda_0\cup c_0$ is a geodesic lamination carried by
$\tau$. This lamination is not large, but it is a 
sublamination of a large geodesic lamination which is 
the union of $\lambda_0\cup c_0$ with two isolated leaves
which pass through the two branches of $\tau$ connecting $c_0$ to 
the subtrack $\eta$ and which spiral from one side about $\lambda_0$, from
the other side about $c_0$. Thus $\tau$ carries a large geodesic
lamination. The same argument also shows that the
dual bigon track 
$\tau^*$ carries a
large geodesic lamination. In other words, $\tau$ is large.
\end{proof} 

For the construction of train tracks in special form for all components
of strata we use the classifiction of components 
due to Kontsevich and Zorich \cite{KZ03} (for abelian differentials)
and Lanneau \cite{L08} (for quadratic differentials).

\begin{proposition}\label{lanneau}
\begin{enumerate}
\item For every $g\geq 4$ the stratum
${\cal H}(2g-2)$ has three connected components.
One of these components is hyperelliptic, the
other two are distinguished by the parity of the
spin structure they define.
\item The stratum ${\cal H}(4)$ has two components.
One of the components is hyperelliptic, the other
consists of abelian differentials defining an odd
spin structure.
\item ${\cal H}(2)$ is connected.
\item For every $g\not=3,4$ and every $n\geq 0$ 
the stratum ${\cal Q}(4g-4+n;-n)$ is connected.
\item 
The strata ${\cal Q}(12;0)$ and ${\cal Q}(9;-1)$ 
have two connected components, and
${\cal Q}(4;0)=\emptyset$.
\end{enumerate}
\end{proposition}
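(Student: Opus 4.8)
The proposition collects facts that are already contained in the classification of connected components of strata, so the plan is not to prove anything new but to read off the relevant special cases from \cite{KZ03} and \cite{L08} and to reconcile the notation with that fixed in Section~\ref{strata}.

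For parts (1)--(3) I would quote the Kontsevich--Zorich theorem \cite{KZ03}, specialized to the minimal stratum ${\cal H}(2g-2)$. That theorem says: there is always a hyperelliptic component; since $2g-2$ is even, the non-hyperelliptic differentials are partitioned according to the parity of the associated spin structure; for $g\geq 4$ both parities occur in nonempty non-hyperelliptic components, which gives the three components in (1); for $g=3$ only the odd-parity non-hyperelliptic component occurs, which gives the two components of ${\cal H}(4)$ in (2); and for $g=2$ the whole stratum ${\cal H}(2)$ is its hyperelliptic component and hence connected, which is (3). The only delicate point is to check which spin parity survives in the borderline cases $g=2,3$, and this is precisely what is tabulated in \cite{KZ03}.

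For parts (4)--(5) I would quote Lanneau's classification \cite{L08} of connected components of strata of quadratic differentials that are not squares of one-forms. The strata appearing here are exactly the ones with a single zero: since $\sum_i m_i-p_1=4g-4$ (see \eqref{h}), a single zero together with $n$ simple poles forces that zero to have order $4g-4+n$, i.e. the stratum is ${\cal Q}(4g-4+n;-n)$. Such a stratum is never hyperelliptic, because a hyperelliptic component of a stratum of quadratic differentials always has at least two zeros. Lanneau's theorem then says that a non-hyperelliptic stratum is connected unless it is one of four exceptional strata, and among the single-zero strata only ${\cal Q}(12;0)$ (which has $g=4$) and ${\cal Q}(9;-1)={\cal Q}(-1,9)$ (which has $g=3$) occur on that list, each with exactly two components; the remaining two exceptional strata ${\cal Q}(-1,3,6)$ and ${\cal Q}(-1,3,3,3)$ have more than one zero. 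This yields connectedness of ${\cal Q}(4g-4+n;-n)$ for all $g\neq 3,4$ (when this stratum is nonempty) and the two-component count in (5). Finally, ${\cal Q}(4;0)=\emptyset$: a holomorphic quadratic differential on a genus $2$ surface with a unique zero of order $4$ is necessarily the square of an abelian differential with a double zero, hence excluded by the standing convention; this non-existence statement is again part of \cite{L08}. There is therefore no genuine mathematical obstacle here; the work is entirely in the bookkeeping of the low-genus exceptional and empty cases and in matching conventions.
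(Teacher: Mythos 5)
Your proposal matches the paper's approach: the paper states Proposition~\ref{lanneau} with no proof, treating it as a direct citation of the classification theorems of Kontsevich--Zorich \cite{KZ03} and Lanneau \cite{L08}. Your unpacking of which special cases are needed (minimal strata ${\cal H}(2g-2)$ in genera $2,3,\geq 4$ from \cite{KZ03}; the single-zero strata ${\cal Q}(4g-4+n;-n)$, the exceptional list, and the nonexistence of ${\cal Q}(4)$ from \cite{L08}, with \cite{L04} giving the form of hyperelliptic components showing they have at least two zeros) is correct and consistent with what those references assert.
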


We are now ready to show

\begin{proposition}\label{primitive}
Let ${\cal Q}$ be a 
non-hyperelliptic component of a stratum of 
quadratic or abelian differentials on $S$ 
where $3g-3+n\geq 5$. 
Then there is a train track $\tau$ 
in special form for ${\cal Q}$.
\end{proposition}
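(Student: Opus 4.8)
The plan is to deduce the proposition from the classification of components recorded in Proposition~\ref{lanneau} and to assemble the required train tracks from three elementary operations that have already been set up: the base cases of Lemma~\ref{spheretorus}, attaching a handle (Lemma~\ref{handlebubble}), and subdividing a complementary polygon, which is a simple extension in the sense of Lemma~\ref{simpleex}. The first step is a reduction to minimal strata. Let ${\cal Q}$ be a non-hyperelliptic component of a stratum on $S_{g,n}$ with $\ell$ zeros; colliding all of its zeros yields a differential lying in the closure of a well-defined component ${\cal Q}_0$ of the minimal stratum ${\cal Q}(4g-4+n;-n)$ (respectively ${\cal H}(2g-2)$ in the abelian case), and ${\cal Q}_0$ is again non-hyperelliptic. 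I would show that if $\tau_0$ is a train track in special form for ${\cal Q}_0$ with clean elementary pair $(c_1,c_2)$, then a train track in special form for ${\cal Q}$ is obtained by subdividing the complementary polygon of $\tau_0$ that contains the single zero, adding small branches successively so as to create $\ell$ polygons with $m_i+2$ sides. Each step is a simple extension, so by Lemma~\ref{simpleex} and the discussion of orientability-preserving subdivisions the result $\tau$ is large and of the prescribed topological type, orientable when $\tau_0$ is (the polygon around the zero of a minimal stratum has plenty of sides, evenly many in the abelian case); the subdivisions are performed away from $c_1,c_2$ and from the subsurface witnessing the special form, so $(c_1,c_2)$ stays a clean elementary pair for $\tau$; and ${\cal Q}(\tau)$ lands in the correct component because the subdivision realizes a degeneration of ${\cal Q}(\tau)$ onto ${\cal Q}(\tau_0)$ in the sense of Definition~\ref{degeneration}, hence changes neither hyperellipticity nor, for abelian differentials, the spin parity, which by Proposition~\ref{lanneau} are the only invariants separating the relevant components. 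It therefore suffices to produce train tracks in special form for all non-hyperelliptic components of minimal strata.

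The second step treats the minimal strata by attaching handles. For a stratum of meromorphic quadratic differentials on $S_{g,n}$ with $n\geq 4$, I would start from the train track in special form on the torus $S_{1,n}$ provided by Lemma~\ref{spheretorus} and attach $g-1$ handles (Lemma~\ref{handlebubble}); each attachment raises the genus by one and adds $4$ to one entry of the topological type, so after $g-1$ steps one reaches a large train track of type $(4g-4+n;-n)$. It is still in special form: the handles are glued inside complementary polygons disjoint from the pair $(c_1,c_2)$ coming from the torus model and from the surface witnessing its special form, and removing $(c_1,c_2)$ from the final track gives that surface with $g-1$ handles attached, which is large again by Lemma~\ref{handlebubble}. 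When the minimal stratum ${\cal Q}(4g-4+n;-n)$ is connected --- which by Proposition~\ref{lanneau} and \cite{L08} holds outside the two sporadic strata considered below --- any large train track of that type belongs to ${\cal L\cal T}({\cal Q}_0)$, and this settles the case. For abelian differentials, and for quadratic differentials with $n\leq 3$, the base cases of Lemma~\ref{spheretorus} are no longer available; here I would instead start from an explicit orientable large train track on a surface of genus at most $2$ that already carries a clean non-separating vertex cycle (for abelian differentials, one modelling ${\cal H}(2)$ on $S_2$), attach handles to build up the genus, and use the clean vertex cycle created by the last handle as the second curve of an elementary pair. The two combinatorially distinct ways of placing the attaching arcs $b_1,b_2$ of a handle change the spin parity of the resulting differential in the two possible ways --- this is the handle-attachment mechanism of \cite{KZ03} translated into the train-track model --- so both spin parities of ${\cal H}(2g-2)$, and with them all of its non-hyperelliptic components, are reached.

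The remaining cases are the finitely many sporadic strata of Proposition~\ref{lanneau}(5), namely ${\cal Q}(12;0)$ on $S_{4,0}$ and ${\cal Q}(9;-1)$ on $S_{3,1}$, whose two components each are separated neither by hyperellipticity nor by spin, together with the strata obtained from them by splitting the single zero. For each of the two components of ${\cal Q}(12;0)$ and of ${\cal Q}(9;-1)$ I would exhibit an explicit large train track in special form, in the style of the pictures in Lemma~\ref{spheretorus}, and then apply the subdivision of the first step. I expect the main obstacle to be precisely the inputs the general machinery cannot supply: the explicit low-genus base cases --- the genus-$\leq 2$ abelian and small-$n$ quadratic models and the sporadic-component models --- each of which must be drawn and verified to be large and in special form (a finite but genuinely concrete task), together with the claim that the two placements of a handle realize both spin parities, which amounts to importing the parity computation of \cite{KZ03} into the present combinatorial setting and is the real reason that attaching handles suffices to reach every non-hyperelliptic component of the minimal abelian strata.
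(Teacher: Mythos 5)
The proposal follows the same architecture as the paper's proof: explicit low-genus base cases, propagation by handle attachment via Lemma~\ref{handlebubble}, and passage to non-minimal strata by subdividing complementary polygons. Two of its load-bearing claims, however, are either unproved or rest on statements stronger than those cited.

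First, the spin parity mechanism. You assert that the two combinatorially distinct placements of the arcs $b_1,b_2$ in a handle attachment realize both spin parities of $\mathcal{H}(2g-2)$, and acknowledge at the end that this requires a computation you have not carried out. The paper argues differently: it computes $r_\tau(\beta_1)=0$ only for the specific choice $s_j=s_i+1$ (the consecutive-sides attachment), concludes that handle attachment with \emph{this} choice always flips the parity, and then obtains both parities of $\mathcal{H}(2k)$ for $k\geq 3$ by induction from $\mathcal{H}(4)$, using that $\mathcal{H}(4)$ already contains both an even (hyperelliptic, supplied by Lemma~\ref{hyperelliptic}) and an odd (non-hyperelliptic) component, and that a handle attached to a non-hyperelliptic model stays non-hyperelliptic. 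Your alternative mechanism is plausible and would become an honest argument after a second index computation, so this is a gap you correctly flag rather than a wrong turn.

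Second --- and this one the proposal does not flag --- the reduction to minimal strata. You collide all zeros of $\mathcal{Q}$ to land in a component $\mathcal{Q}_0$ of $\mathcal{Q}(4g-4+n;-n)$ (or $\mathcal{H}(2g-2)$), subdivide the polygon around the single zero, and conclude that the resulting $\tau$ lies in $\mathcal{L}\mathcal{T}(\mathcal{Q})$ because subdivision preserves hyperellipticity and spin, \emph{``which by Proposition~\ref{lanneau} are the only invariants separating the relevant components.''} But Proposition~\ref{lanneau} records the classification only for \emph{minimal} strata; to identify the component produced by subdivision with the given $\mathcal{Q}$ you need the full Kontsevich--Zorich \cite{KZ03} and Lanneau \cite{L08} classification of components of arbitrary strata, plus the facts that $\mathcal{Q}_0$ is uniquely determined by $\mathcal{Q}$ and is itself non-hyperelliptic --- which you state but do not argue. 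The paper avoids all of this by invoking, in its Step~4, Lanneau's adjacency theorem: with finitely many sporadic exceptions, every non-hyperelliptic component with at least two distinct types of singularities is adjacent to $\mathcal{Q}(4g-4)$ (with poles allowed to merge into zeros). That adjacency statement is the strictly stronger input that actually closes the reduction step, and without naming it your second step is incomplete. The handling of sporadic components and of quadratic strata with few poles is otherwise consistent with the paper.
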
 
\begin{proof} 
We divide the proof of the proposition into four steps.
The case $g=0$ and $g=1$ is covered by Lemma \ref{spheretorus}.

{\sl Step 1:} 
Strata of quadratic differentials with at least
three poles.

By the classification of Lanneau \cite{L08}, every stratum in moduli
space consisting of meromorphic quadratic differentials with
at least three poles
is connected. Thus for $n\geq 3$ and any $g\geq 2$, 
we can construct a large train track in special 
form for the stratum ${\cal Q}(4g-4+n;-n)$ by attaching 
handles to a train track on the
torus as described in 
Lemma \ref{handlebubble}. Although for $n=3$, this train track 
is not in special form, it contains 2 clean vertex cycles, and 
the train track obtained from it by attaching a handle is in special form.
These train tracks can be subdivided
to train tracks in special form for any stratum of 
quadratic differentials with at least three poles.

{\sl Step 2:} 
Strata of abelian differentials with a single zero.

The moduli space ${\cal H}(2)$ of abelian differentials with
a single zero on a surface of genus $2$ is connected.
It consists of
differentials which define an \emph{even spin structure}.
Figure G below shows a large train track $\eta\in 
{\cal L\cal T}({\cal H}(2))$. It contains a single 
clean vertex cycle. Attaching a handle to $\eta$ results in a train 
track in special form on a surface of genus $3$.

\begin{figure}[ht]
\begin{center}
\psfrag{Figure}{Figure I} 
\includegraphics 
[width=0.9\textwidth] 
{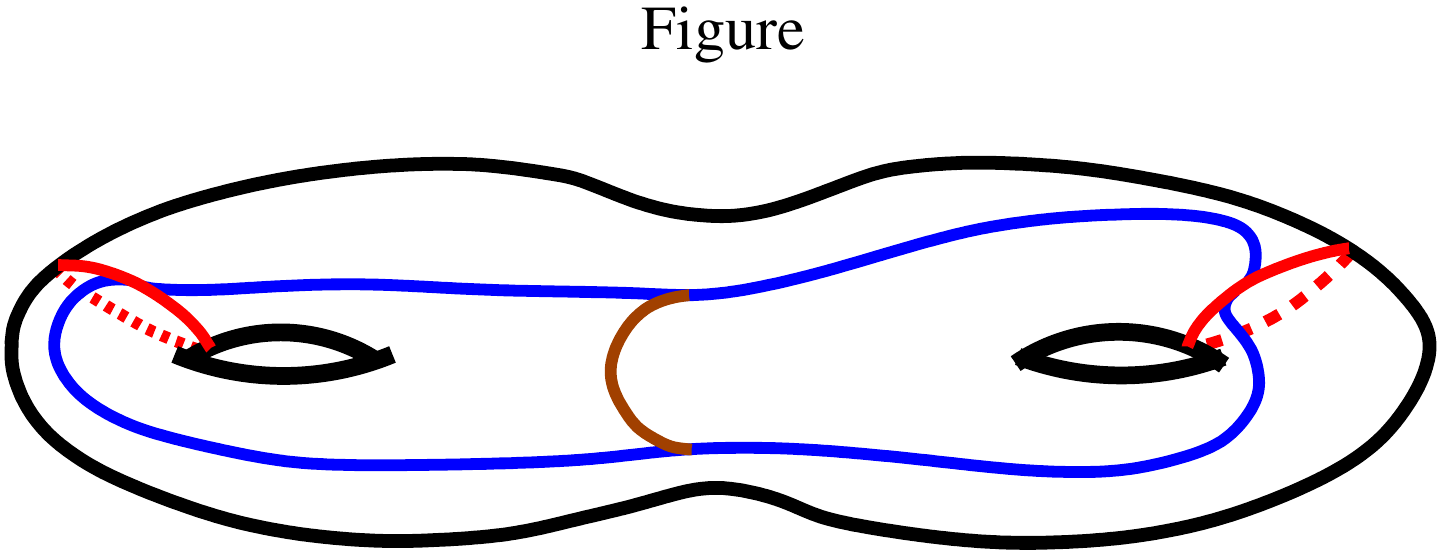}
\end{center}
\end{figure}

For $g=3$, the stratum ${\cal H}(4)$ consists of two components.
One of these components is hyperelliptic. The two components
are distinguished by the \emph{parity of the spin structure} 
they define \cite{KZ03}. The parity of the spin structure
for the hyperelliptic component is even.
By Lemma \ref{hyperelliptic},  
it suffices to show that attaching a handle to the train track 
$\eta$ in Figure G results in a train track in special form for 
the component with odd spin structure.
To this end we compute from a large train track 
$\tau\in {\cal L\cal T}({\cal Q})$ 
the parity of the spin structure
of the component ${\cal Q}$.

The parity of the spin structure defined by an abelian
differential $\omega$ 
can be calculated as follows
(see p.643 of \cite{KZ03}). 
For a smooth simple closed curve $\alpha$ on $S$ not 
passing through a zero of $\omega$ define 
$ind_\alpha\in \mathbb{Z}$ to be the total change of 
angle, divided by $2\pi$, between the tangent of $\alpha$ and 
the vector tangent to the vertical foliation of 
$\omega$, equipped with a fixed orientation.
Let $\{\alpha_i,\beta_i\mid i=1,\dots,g\}$ be
any system of $2g$ smooth simple closed curves
which define a symplectic basis for $H_1(S,\mathbb{Z})$
with the above property. 
Then 
\[{\rm Arf}(\omega)=\sum_{i=1}^g(ind_{\alpha_i}+1)
(ind_{\beta_i}+1)(\text{mod } 2).\]

This formula enables us to calculate the parity
using a train track. Namely, a large orientable
train track $\tau$ of type $(4g-4;0)$ 
has a single complementary component $C$ which is
a $4g$-gon. Let $\alpha$ be a smooth simple closed
curve on $S$ which intersects $\tau$ transversely in 
finitely many points contained in the interior
of some branches of $\tau$. Define the \emph{index} 
$r_\tau(\alpha)\in \mathbb{Z}/2\mathbb{Z}$ 
of $\alpha$ as follows.

Choose a numbering of the sides of the complementary
region of $\tau$ in counter-clockwise order.
Choose also an orientation of $\alpha$.
A transverse intersection point
$p\in \alpha\cap \tau$ is contained in precisely
two sides $s_1,s_2$ of $C$. 
Write $r(p)=s_2-s_1+1\,(\text{mod }2) =s_1-s_2+1\,(\text{mod }2)$ 
and define 
\[r_\tau(\alpha)=\sum_pr(p)\in \mathbb{Z}/2\mathbb{Z}.\]
Note that if $\alpha^\prime$ is isotopic to $\alpha$
with an isotopy which moves some subarc of 
$\alpha$ across a switch then this number is 
unchanged, and the same holds true if $\alpha$ is fixed and 
$\tau$ is 
modified by a split.

Choose smooth simple closed curves 
$\{\alpha_i,\beta_i\mid i=1,\dots,g\}$ which define a symplectic
basis of $H_1(S,\mathbb{Z})$.
Assume that each of the
curves $\alpha_i$ intersects $\tau$ in finitely many
points which are contained in the interior of some branch of 
$\tau$. Define
\[\phi(\tau)=\sum_{i=1}^g(r_\tau(\alpha_i)+1)
(r_\tau(\beta_i)+1)\in \mathbb{Z}/2\mathbb{Z}\]
and call this number the \emph{parity of the spin structure}
of $\tau$.

Recall that $\eta$ is a large train track with a single
complementary component $C$. 
If the train track $\tau$ is obtained 
from $\eta$ by attaching a handle, 
then the parity of the spin structure of 
$\tau$ can be calculated from the parity
of the spin structure of $\eta$ as follows.
There is a primitive vertex cycle $\alpha_1$ for $\tau$ 
which is disjoint from $\eta$ (it
goes around the handle). This vertex cycle $\alpha_1$ 
satisfies $r_\tau (\alpha_1)=0$ since up to
homotopy, it has a unique intersection point with
$\tau$ which is contained 
in one of the small branches adjacent to the primitive
vertex cycle $\alpha_1$. Then this
intersection point is contained  
in two consecutive
sides of the complementary component of $\tau$.

There is a second curve $\beta_1$ 
in the handle which
intersects $\alpha_1$ in a single point, and
it intersects $\tau$ in a single point $q$ as well.
Let $s_i,s_j$ $(i<j)$ be the  
sides of the complementary component
$C$ of $\eta$ at which 
branches of $\tau-\eta$ are attached. If we choose
$s_j=s_i+1$ then Figure H shows that 
$r_\tau(\beta_1)=0$.
\begin{figure}
\begin{center}
\psfrag{Figure J}{Figure J} 
\includegraphics 
[width=0.5\textwidth] 
{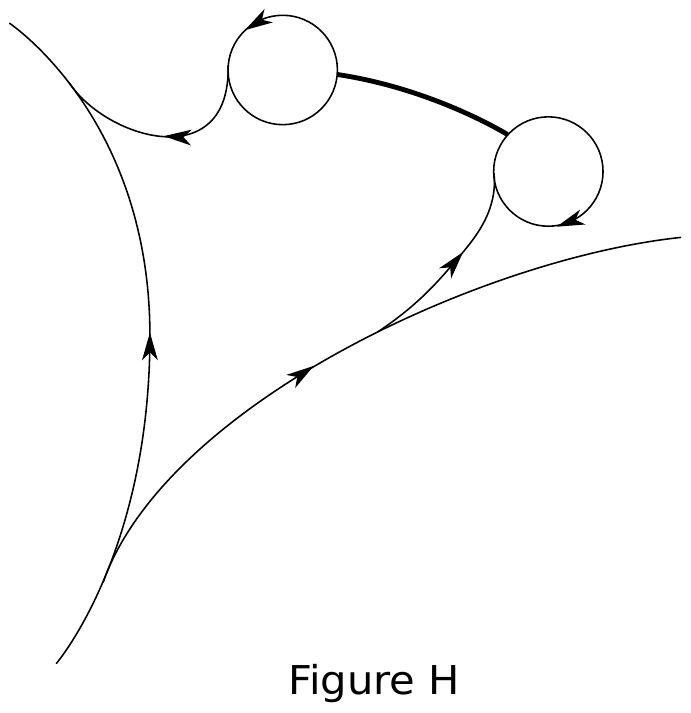}
\end{center}
\end{figure}

The curves on $S$ used to calculate the parity of the spin
structure for $\eta$ can be chosen to be disjoint
from $\tau-\eta$ viewed as a subgraph of the complementary
component $C$. Then 
the indices of the curves used for $\eta$ 
do not change mod 2 and hence 
the parity of the spin structure of 
$\tau$ is opposite to
the parity of the spin structure for $\eta$. 
In particular, attaching a handle to the train track shown
in Figure G results in a train track in special form for the
component of ${\cal H}(4)$ with odd spin structure.
Together with the construction for hyperelliptic 
components, we obtain a train track in special form
for each of the two components of ${\cal H}(4)$.

Using again Proposition \ref{lanneau},
for $k\geq 3$ the two different non-hyperelliptic 
components of ${\cal H}(2k)$ are distinguished by the
parity of the spin structure they define.
It follows from the above discussion that attaching a handle
to a train track in special form for 
a component of ${\cal H}(2k-2)$ with even (or odd) spin structure
is a train track in special form for a component of 
${\cal H}(2k)$ with odd (or even) spin structure. 
Now it is easy to see that a train track for a hyperelliptic
component can only arise by this construction from
a train track for a hyperelliptic component.
Since the parity of the spin structure of a hyperelliptic
component is even, none of the two train tracks arising
from attaching a handle to one of the train tracks
in special form for a component of ${\cal H}(4)$ is a train 
track for a hyperelliptic component. 
Thus by induction beginning with ${\cal H}(4)$, we obtain in this
way for each $k\geq 1$ and for each non-hyperelliptic 
component ${\cal Q}$ of 
${\cal H}(2k)$ 
a train track in special form for ${\cal Q}$.

{\sl Step 3:} Strata of quadratic differentials
with a single zero and at most two poles.

By the classification of Lanneau \cite{L08}, 
strata of quadratic 
differentials with a single zero and at most
two poles are connected.

To obtain a train track in special form for this stratum 
on a surface of genus
$g=2$ with 
$m=2$ punctures, 
attach to the train track shown in Figure G a circle as shown in 
Figure $D$ enclosing
two once-puncture monogons. Similarly, to obtain
a train track in special form for a stratum with a single
zero and a single pole on a surface of genus $3$, attach
a train track in special form on a surface of genus $3$ a
once punctured monogon.

A train track in special form for a stratum in higher genus
can be obtained by attaching handles
to the train track for genus $2$ or $3$.

{\sl Step 4:} Subdividing complementary components.

Following \cite{L08}, we say that a component 
${\cal Q}$ of a stratum in ${\cal Q}(S)$ for a surface $S$ 
of genus $g\geq 2$  
is \emph{adjacent} to a component ${\cal Q}_0$ of another stratum 
if ${\cal Q}_0$ is contained in 
the closure $\overline{\cal Q}$ of ${\cal Q}$ in ${\cal Q}(S)$.
Here we allow that poles merge with zeros and disappear. 

Lanneau \cite{L08} showed that with the exception of 
one sporadic component in each of the strata 
${\cal Q}(9;-1)$, ${\cal Q}(3,6;-1)$,
${\cal Q}(3,3,3;-1)$, any non-hyperelliptic component of  
a stratum with at least two distinct types of zeros or poles
is adjacent to ${\cal Q}(4g-4)$. For such a component,
train tracks in special form can be obtained from
train tracks in special form for 
components of strata with a single zero by
subdivision of complementary components.

For the completion of the proof 
of the proposition we are left with the investigation of 
the sporadic components in genus $g=3,4$ as 
listed in the classification of Lanneau \cite{L08}.

The sporadic component for $g=4$ is a component of 
${\cal Q}(12)$ which can be checked
explicitly. The sporadic component
of ${\cal Q}(3,3,3;-1)$ is adjacent to the
sporadic component of ${\cal Q}(3,6;-1)$, and the
sporadic component of ${\cal Q}(3,6;-1)$
is adjacent to the sporadic component of ${\cal Q}(9;-1)$ \cite{L08}.
Using Step 2 above, it is therefore enough to construct
a train track with the required properties which belongs to 
the sporadic component of ${\cal Q}(9;-1)$. However, the sporadic
component of ${\cal Q}(9;-1)$ admits a quadratic differential with a 
two-cylinder-decomposition which can be used to construct 
a train track as required
(compare the table in \cite{L08}).
This completes the proof of the proposition.
\end{proof}

\begin{remark} Although we use the classification result
of Kontsevich-Zorich and of Lanneau in our construction,
the construction can be used to give a alternative proof for
the classification.
\end{remark}

\begin{remark} \label{parity}
The computation of the parity of the spin structure in the proof of
Proposition \ref{primitive} is similar to the computation of parities
in the transition of a component of a stratum to a component of its
principal boundary in \cite{EMZ03}. Unfortunately we can not 
use these computations for our needs.
\end{remark}

\section{Coding of pseudo-Anosov mapping classes}\label{sec:coding }

In this section we combine the results from Sections 1-4 and from 
\cite{H13} and \cite{H22} to set up the main properties 
needed for the proof of Theorem \ref{main}.
We continue to use the notations from sections 1-4.

The number $m>0$ of branches of 
a large train track 
$\tau\in {\cal L\cal T}(m_1,\dots,m_\ell;-n,p)$
only depends on the topological type of $\tau$.
A numbering of the
branches of $\tau$ defines
an embedding of the cone ${\cal V}(\tau)$ of transverse measures on $\tau$
onto a closed convex cone in $\mathbb{R}^m$ determined
by the switch conditions. 
For the standard basis $e_1,\dots,e_m$
of $\mathbb{R}^m$, this embedding associates
to a measure $\mu\in {\cal V}(\tau)$ 
the vector $\sum_i\mu(i)e_i\in \mathbb{R}^m$ where we
identify a branch of $\tau$ with its number.
If $\sigma\prec\tau$ then the transformation
${\cal V}(\sigma)\to {\cal V}(\tau)$ induced by
a carrying map $\sigma\to \tau$ is linear in these coordinates.

The mapping class group ${\rm Mod}(S)$ acts on 
marked train tracks by precomposition of marking. 
A \emph{train track expansion} of a mapping class
$\phi\in {\rm Mod}(S)$ is a train track $\tau$ such that 
$\phi(\tau)\prec\tau$. Then 
the composition of the isomorphism
${\cal V}(\tau)\to {\cal V}(\phi\tau)=\phi({\cal V}(\tau))$
with a carrying map ${\cal V}(\phi\tau)\to {\cal V}(\tau)$
is given by a linear map 
\[A(\phi,\tau):\mathbb{R}^m\to
\mathbb{R}^m.\]
The matrix describing this map with respect to the standard
basis of $\mathbb{R}^m$ has non-negative entries.

By the Perron Frobenius theorem,
an $(m,m)$-matrix $A$ with non-negative entries
admits an eigenvector with non-negative
entries. The corresponding eigenvalue $\alpha$
is positive.
If some power of $A$ is positive, then the 
generalized eigenspace for
$\alpha$ is one-dimensional, and $\alpha$ is
bigger than the absolute value of 
any other eigenvalue of $A$. Furthermore, an eigenvector
of $A$ for this eigenvalue $\alpha$ 
has positive entries and will be called 
a \emph{Perron Frobenius
eigenvector}.

We next collect some information on 
the relation between a pseudo-Anosov mapping class
$\phi$, a train track expansion $\tau$ for $\phi$ 
 and the linear map $A(\phi,\tau)$. 
We begin with Corollary 3.2 of \cite{P88}.
For its formulation, recall that a pseudo-Anosov mapping class
$\phi$ admits an invariant flow line
for the Teichm\"uller flow on the Teichm\"uller 
space of area one abelian or quadratic differentials, which is the 
unit cotangent line of an \emph{axis} of $\phi$.
The axis is prescribed by a pair of projective
measured geodesic laminations 
$([\lambda^h],[\lambda^v])\in {\cal P\cal M\cal L}^2$ which are
the attracting and 
repelling fixed points for the action of $\phi$ on ${\cal P\cal M\cal L}$, 
respectively. 
Recall from Lemma \ref{polyhedron} that 
the cone ${\cal V}(\tau)$ of all non-negative solutions 
to the switch condition on $\tau$ can naturally
be identified with the 
subset of ${\cal M\cal L}$ of measured geodesic laminations
which are carried by $\tau$.

\begin{lemma}\label{perron}
Let $\tau\in {\cal L\cal T}(m_1,\dots,m_\ell;-n,p)$ and 
let $\phi\in {\rm Mod}(S)$ 
be such that $\phi(\tau)\prec\tau$ and that the matrix
$A(\phi,\tau)$ is positive. Then $\phi$ is pseudo-Anosov.
The unit cotangent line of its 
axis
intersects
${\cal Q}(\tau)$. The horizontal measured geodesic lamination 
of a differential in this cotangent line is
a Perron Frobenius eigenvector of the linear map 
$A(\phi,\tau)$.
\end{lemma}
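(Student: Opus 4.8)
The plan is to derive everything from the Perron--Frobenius theorem together with Thurston's classification of mapping classes, using $\tau$ and its dual bigon track $\tau^*$ as the two ``sides'' of the axis. First I would record the linear-algebraic input: the carrying map underlying $A=A(\phi,\tau)$ counts preimages of interior points of branches, so $A$ is a non-negative \emph{integral} matrix, and being positive it is primitive. By Perron--Frobenius it has a one-dimensional eigenray spanned by a vector $v$ with strictly positive entries, whose eigenvalue $\alpha$ is the spectral radius; since $A$ is a positive integral matrix every row sum is at least the number of branches of $\tau$, which is bigger than $1$, so $\alpha>1$. Via Lemma \ref{polyhedron}, $v$ represents a measured geodesic lamination $\lambda^h\in{\cal V}(\tau)\subset{\cal M\cal L}$ that is positive on every branch of $\tau$; as $\tau$ is large and hence fills $S$, the support of $\lambda^h$ fills $S$.

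Next I would read off the dynamics. Unwinding the definition, $A$ is the composition ${\cal V}(\tau)\xrightarrow{\phi}{\cal V}(\phi\tau)=\phi({\cal V}(\tau))\xrightarrow{\mathrm{carry}}{\cal V}(\tau)$, and the carrying map, being homotopic to the identity, does not move a lamination inside ${\cal M\cal L}$; hence $Av=\alpha v$ says precisely $\phi(\lambda^h)=\alpha\lambda^h$. So $[\lambda^h]\in{\cal P\cal M\cal L}$ is a fixed point of $\phi$, and since the projective action of the positive matrix $A$ contracts the simplex ${\cal V}(\tau)$ towards $[v]$ (Birkhoff/Hilbert-metric contraction), $[\lambda^h]$ is the attracting fixed point and $\alpha>1$ its stretch factor. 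Now $\phi$ has infinite order, and it is not reducible: a reducing multicurve $c$ would satisfy $\phi^{k}(c)=c$ for some $k\ge 1$, whence $\alpha^{k}\iota(\lambda^h,c)=\iota(\phi^{k}\lambda^h,\phi^{k}c)=\iota(\lambda^h,c)$ and so $\iota(\lambda^h,c)=0$, impossible since $\lambda^h$ fills $S$. By Thurston's classification, $\phi$ is pseudo-Anosov, with unstable measured lamination $\lambda^h$ and dilatation $\alpha$.

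Finally I would locate the axis using the duality \eqref{intersectionpairing}. The restriction of $\iota$ to ${\cal V}(\tau)\times{\cal V}^{*}(\tau)$ is ${\rm Mod}(S)$-equivariant, so the carrying map ${\cal V}(\phi\tau)\to{\cal V}(\tau)$ has an adjoint taking ${\cal V}^{*}(\tau)$ into ${\cal V}^{*}(\phi\tau)=\phi({\cal V}^{*}(\tau))$, described in the standard coordinates by the transpose $A^{T}$, which is again positive. Its Perron eigenvector $w$ (positive, eigenvalue $\alpha$) represents a measured lamination $\lambda^{v}\in{\cal V}^{*}(\tau)\subset{\cal M\cal L}$ whose support is carried by $\tau^{*}$, with $\phi(\lambda^{v})=\alpha^{-1}\lambda^{v}$ and $\iota(\lambda^h,\lambda^{v})=\sum_{b}v(b)w(b)>0$. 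Since $\phi$ is pseudo-Anosov and $\alpha^{-1}\neq\alpha$, the $\phi$-fixed class $[\lambda^{v}]$ is the stable lamination of $\phi$, and $(\lambda^h,\lambda^{v})$ binds $S$ (as $\lambda^h$ fills). The axis of $\phi$ is the Teichm\"uller geodesic determined by this pair; the area-one differential $q$ on it whose horizontal and vertical measured laminations are positive multiples of $\lambda^h$ and $\lambda^{v}$ has horizontal lamination in ${\cal V}(\tau)$ and vertical lamination carried by $\tau^{*}$, so $q\in{\cal Q}(\tau)$ and $q$ lies on the unit cotangent line of the axis; its horizontal measured geodesic lamination is a positive multiple of $\lambda^h$, i.e.\ a Perron--Frobenius eigenvector of $A(\phi,\tau)$.

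The step I expect to be the main obstacle is the dual statement: verifying carefully that the $\phi$-action on ${\cal V}^{*}(\tau)$ is modelled by $A(\phi,\tau)^{T}$ and, in particular, that the eigenlamination $\lambda^{v}$ so obtained is genuinely carried by the bigon track $\tau^{*}$ (and not merely hitting $\tau$ efficiently with extraneous components), which is exactly what makes the axis meet ${\cal Q}(\tau)$; the remaining ingredients are routine applications of Perron--Frobenius and of the Nielsen--Thurston classification.
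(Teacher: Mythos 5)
Your argument is structured quite differently from the paper's: where the paper simply cites Corollary~3.2 of \cite{P88} for the pseudo-Anosov conclusion and then locates the stable lamination in ${\cal V}^*(\tau)$ by a soft fixed-point argument (since $\phi^{-1}$ carries ${\cal V}^*(\tau)$ into itself and the projectivization of ${\cal V}^*(\tau)$ is a closed ball, Brouwer yields a fixed point, which north--south dynamics identifies as the stable lamination), you attempt to re-derive the pseudo-Anosov criterion from scratch via the Thurston classification and to model the dual action explicitly by $A(\phi,\tau)^T$. The transpose computation, which you flagged as the main risk, is in fact fine: for $\nu\in{\cal V}(\tau)$ and $\mu\in{\cal V}^*(\tau)$, the $\phi$-equivariance of $\iota$ and the pairing formula give $\iota(A\nu,\mu)=\iota(\phi_*\nu,\mu)=\iota(\nu,\phi^{-1}_*\mu)$, which is exactly the adjointness you need, and the positive Perron eigenvector of $A^T$ then represents a lamination genuinely carried by $\tau^*$. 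Your route buys an explicit eigenlamination on the dual side; the paper's is shorter because Brouwer replaces the linear algebra.

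The real gap is in the step you pass over in one clause: ``as $\tau$ is large and hence fills $S$, the support of $\lambda^h$ fills $S$.'' Positivity of $\lambda^h$ on every branch of a \emph{filling} train track does not by itself imply that ${\rm supp}(\lambda^h)$ fills: complementary components of $\tau$ are, up to homotopy, obtained from complementary components of ${\rm supp}(\lambda^h)$ by \emph{subdivision} (see Remark~\ref{fullnecessary}), so the complementary regions of $\lambda^h$ can be strictly larger than disks even when the carrying map is onto $\tau$; a non-filling multicurve fully carried by a filling $\tau$ is the kind of thing one has to rule out, and nothing in your argument does so. Indeed the paper is careful elsewhere (Proposition~\ref{structure} and Lemma~\ref{containedinstratum}) to treat ``support of topological type ${\cal L\cal L}(m_1,\dots,m_\ell;-p_1,p_2)$'' as a genuine hypothesis on points in the interior of ${\cal V}(\tau)$, not an automatic consequence of positivity. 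Without filling, your irreducibility step collapses, since $\iota(\lambda^h,c)=0$ is no longer a contradiction. This is precisely the content that Penner's Corollary~3.2 supplies, which is why the paper cites it. A self-contained repair should invoke the \emph{transverse}-recurrence half of largeness: produce $\lambda^v\in{\cal V}^*(\tau)$ first by your $A^T$ argument, check that $(\lambda^h,\lambda^v)$ bind $S$ (a positive transverse measure paired with a positive tangential measure on a filling track), conclude that the resulting quadratic differential has $\phi$ acting by stretching the horizontal and contracting the vertical foliation and hence that $\phi$ is pseudo-Anosov, and only then deduce that $\lambda^h$ fills because unstable laminations of pseudo-Anosov maps always do.
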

\begin{proof}
It follows from Corollary 3.2 of \cite{P88} that
$\phi$ is pseudo-Anosov and that the
attracting fixed point for
its action on ${\cal P\cal M\cal L}$ is the projectivization
of a Perron-Frobenius eigenvector $\lambda$ of 
the matrix $A(\phi,\tau)$. This eigenvector is positive
and unique up to scale. 

As $\phi(\tau)\prec \tau$, we have 
$\phi^{-1}(\tau^*)\prec \tau^*$ and hence 
${\cal V}^*(\phi^{-1}(\tau))\subset {\cal V}^*(\tau)$. 
Since $\phi$ acts with north-south dynamics on 
${\cal P\cal M\cal L}$, we conclude that 
the attracting fixed point of $\phi^{-1}$ is a measured
geodesic lamination whose support is carried by
$\tau^*$. This then implies that 
the unit cotangent line of the axis of $\phi$
intersects ${\cal Q}(\tau)$.
%
%
%
%
%
\end{proof}


The following 
lemma gives a more geometric approach to
the study of periodic orbits in a component 
${\cal Q}$ 
of a stratum. 
%
In its formulation, we assume that the surface 
$S$ is equipped with a complete finite volume hyperbolic metric. 
The notion of an $a$-long train track which $\epsilon$-follows a
geodesic lamination was introduced at the end of
Section \ref{strata}. We denote as before by
$\tilde {\cal Q}$ the preimage of 
${\cal Q}$ 
in the Teichm\"uller space of marked differentials.

\begin{lemma}\label{attracting}
If $q\in \tilde {\cal Q}$ is a point on the 
cotangent line of an axis of the pseudo-Anosov mapping class $\phi$ 
and if $q\in {\cal Q}(\tau)$ for some 
$\tau\in{\cal L\cal T}(\tilde {\cal Q})$, then 
for sufficiently large $k>0$, the train track
$\phi^k (\tau)$ is $a$-long and $\epsilon$-follows 
the support of the attracting fixed point 
$[\mu]$ for the action of $\phi$ on ${\cal P\cal M\cal L}$.
\end{lemma}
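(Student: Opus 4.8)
The plan is to use the north-south dynamics of $\phi$ on ${\cal P\cal M\cal L}$ together with the stability properties of $a$-long, $\epsilon$-following train tracks established in Section 3 of \cite{H09}. First I would fix the data: let $\mu = \lambda^h$ be the horizontal measured geodesic lamination of $q$, which by Lemma \ref{perron} (applied to a suitable power of $\phi$, or directly since $q$ lies on the axis) is the attracting fixed point for the action of $\phi$ on ${\cal P\cal M\cal L}$, and its support ${\rm supp}(\mu)$ is a minimal large geodesic lamination of the same topological type as $\tau$, carried by $\tau$. By Corollary \ref{followgood}, for sufficiently small $\epsilon_0 > 0$ there is an $a$-long train track $\eta_0$ which $\epsilon_0$-follows ${\rm supp}(\mu)$, and $\eta_0 \in {\cal L\cal T}(\tilde{\cal Q})$; moreover $\eta_0$ carries ${\rm supp}(\mu)$ and is carried by $\tau$.

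Next I would observe that the train tracks $\phi^k(\tau)$ form a nested sequence $\phi^{k+1}(\tau) \prec \phi^k(\tau) \prec \cdots \prec \tau$ (since $\phi(\tau) \prec \tau$ implies $\phi^{k+1}(\tau) \prec \phi^k(\tau)$ by applying $\phi^k$), and that $\cap_k {\cal V}(\phi^k(\tau))$ consists of measured laminations carried by every $\phi^k(\tau)$; by the north-south dynamics of $\phi$ on ${\cal P\cal M\cal L}$, the projectivization of this intersection is exactly the point $[\mu]$. The key geometric input is that carrying is transitive and that $\epsilon$-following is preserved under carrying of a train track of the same topological type: more precisely, from Section 3 of \cite{H09}, if a train track $\sigma$ carries ${\rm supp}(\mu)$ and the straightened branches of $\sigma$ are long and have small curvature and small exterior angles, then $\sigma$ is automatically $a$-long and $\epsilon$-follows ${\rm supp}(\mu)$ for the appropriate constants. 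So I would show that for $k$ large, $\phi^k(\tau)$ satisfies these hypotheses.

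To make this quantitative, I would argue as follows. Apply the mapping class $\phi^k$ to the carrying relation ${\rm supp}(\mu) \prec \eta_0 \prec \tau$, using that $\phi^k$ fixes ${\rm supp}(\mu)$ up to isotopy: this gives ${\rm supp}(\mu) \prec \phi^k(\eta_0) \prec \phi^k(\tau)$. The point is that $\phi^k(\eta_0)$ need not be $a$-long in the hyperbolic metric, but its \emph{image under} a carrying map into $\phi^k(\tau)$ pulls long trainpaths apart, and the Teichm\"uller-dynamical statement is that $\phi^k(\tau)$, straightened in the hyperbolic metric, converges (in the relevant $C^1$ sense on branches) to ${\rm supp}(\mu)$ as $k \to \infty$, because the vertical lamination of $\phi^k q = q$ composed with the flow has horizontal lamination converging to $[\mu]$. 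Concretely: a differential $q_k \in {\cal Q}(\phi^k(\tau))$ on the axis has horizontal lamination $\phi^k$-close to $\mu$, the branches of $\phi^k(\tau)$ are subarcs of leaves of a lamination Hausdorff-close to ${\rm supp}(\mu)$, hence by continuity of the straightening map (the map $\xi \mapsto $ geodesic representative is continuous in the Hausdorff topology, \cite{CEG87,L83}) the straightened branches have tangent directions in the $\epsilon$-neighborhood of the projectivized tangent bundle of ${\rm supp}(\mu)$, with long branches and small exterior angles, once $k$ is large enough. This is the statement we want.

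The main obstacle I expect is the quantitative control in the previous paragraph: turning ``$\phi^k(\tau)$ carries ${\rm supp}(\mu)$ and $k$ is large'' into ``the straightened branches of $\phi^k(\tau)$ are $a$-long with exterior angles $\le \epsilon$.'' Being $a$-long is not automatic from carrying — a short branch can be carried by a train track with only long branches — so one genuinely needs the expanding dynamics: under iteration of $\phi$, any fixed branch of $\tau$ gets refined into trainpaths that traverse more and more branches of $\phi^k(\tau)$, forcing the individual branches of $\phi^k(\tau)$ (which correspond to shorter and shorter pieces of the carried lamination ${\rm supp}(\mu)$) to straighten out toward geodesic segments of ${\rm supp}(\mu)$ of length bounded below, with curvature and breakpoint angles tending to $0$. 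I would extract this from the combination of the train track expansion formalism recalled at the start of Section \ref{sec:coding } (the linear maps $A(\phi,\tau)$ and positivity), the convergence of splitting sequences to minimal laminations in Section 2 of \cite{H09}, and Lemma 3.2 of \cite{H09}, which produces and characterizes $a$-long $\epsilon$-following train tracks. The remaining bookkeeping — that $\phi^k(\tau)$ still has the correct topological type (immediate, since $\phi$ is a homeomorphism) and still lies in ${\cal L\cal T}(\tilde{\cal Q})$ (immediate from Lemma \ref{splitinv} applied along the carrying chain, or from $\phi$-invariance of ${\cal L\cal T}(\tilde{\cal Q})$) — is routine.
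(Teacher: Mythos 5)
Your high-level strategy---use north--south dynamics of $\phi$ on ${\cal P\cal M\cal L}$ together with the carrying structure to show that the straightening of $\phi^k(\tau)$ converges to ${\rm supp}(\mu)$---is reasonable, and you correctly flag the key obstacle: carrying alone does not yield the $a$-long, $\epsilon$-following property. But the step you propose to close that gap does not hold up, and the paper in fact takes a different, more concrete route.

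The problematic step is the assertion that ``the branches of $\phi^k(\tau)$ are subarcs of leaves of a lamination Hausdorff-close to ${\rm supp}(\mu)$, hence by continuity of the straightening map \dots the straightened branches have tangent directions in the $\epsilon$-neighborhood of the projectivized tangent bundle of ${\rm supp}(\mu)$.'' Branches of a train track are not subarcs of leaves of any geodesic lamination---a train track and a lamination it carries are distinct objects---and Hausdorff-continuity of geodesic straightening is a statement about closed geodesic subsets of $S$, not about isotopy classes of embedded graphs. Knowing that $\phi^k(\tau)$ carries ${\rm supp}(\mu)$ and that $\cap_k {\cal V}(\phi^k(\tau))$ projectivizes to $[\mu]$ is purely combinatorial information; it does not by itself control the lengths, curvatures, or exterior angles of straightened branches, which is exactly what $\epsilon$-following demands. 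The phrase ``$\phi^k$-close'' is also undefined, and the direction-of-carrying bookkeeping in your ``${\rm supp}(\mu)\prec\phi^k(\eta_0)\prec\phi^k(\tau)$ pulls long trainpaths apart'' paragraph is muddled: the branches of $\phi^k(\tau)$ map to long trainpaths \emph{on} $\tau$ under a carrying map $\phi^k(\tau)\to\tau$, not the other way around.

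The paper's proof supplies a concrete geometric realization that does the quantitative work you are trying to get for free. Using the Casson--Bleiler collapse (Lemma 6.2 of \cite{CB88}) it replaces $\tau$ by an embedded graph ${\cal G}$, homotopic to $\tau$, whose vertices are precisely the singular points of the flat metric $h$ defined by $q$ and whose edges are $h$-geodesic segments with directions uniformly bounded away from vertical (here the absence of vertical saddle connections is used). Then $\phi$ is represented by the affine pseudo-Anosov stretch, smooth off the singular set, contracting vertical and expanding horizontal; applying $\phi^k$ to ${\cal G}$ produces flat-geodesic graphs whose edge directions converge to horizontal and whose edge lengths go to infinity, so $\phi^k({\cal G})$ Hausdorff-subconverges to a leaf-saturated set, which by minimality is the whole horizontal foliation. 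Passing to hyperbolic straightenings then gives convergence to $\hat\mu = {\rm supp}([\mu])$. This is where the $a$-long and $\epsilon$-following estimates come from. If you want to pursue your softer route, you would need to construct something playing the role of ${\cal G}$; the appeals to splitting-sequence convergence and to Lemma 3.2 of \cite{H09} as stated do not by themselves bridge the gap.
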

\begin{proof}
Let $\tau\in {\cal L\cal T}(\tilde {\cal Q})$
and assume that $q\in {\cal Q}(\tau)$ for a differential 
$q\in \tilde {\cal Q}$ on the contangent line of
the axis of the pseudo-Anosov mapping class $\phi$.
Denote by $\hat \mu={\rm supp}([\mu]),
\hat \nu={\rm supp}([\nu])$ the supports of the attracting
and repelling 
fixed points for the action of $\phi$ on ${\cal P\cal M\cal L}$,
respectively. 
Then $\hat \mu$ is carried by $\tau$, furthermore $\hat \mu$ is minimal
and its topological type coincides with the topological type of $\tau$.
In particular, the carrying map $\hat \mu\to \tau$ induces a bijection 
between the complementary polygons of $\hat \mu$ and the complementary
components of $\tau$. 

Represent $\hat \mu,\hat \nu$ by geodesics for the hyperbolic metric $h^\prime$
in the conformal class of $q$. Then up to homotopy, we may
assume that the switches of $\tau$ are contained in 
the intersection $S-(\hat \mu\cup \hat \nu)$ of the complementary
components of $\hat \mu,\hat \nu$. Furthermore, the weight 
deposited by $\hat \nu$ on any branch of $\tau$ is positive.

Following the proof of Lemma 6.2 of \cite{CB88},
define an equivalence relation $\sim$ on $S$ by
$x\sim y$ if either
\begin{enumerate}
\item[(i)] $x,y$ are in the closure of the same component of
$S-(\hat \mu\cup \hat \nu)$ or
\item[(ii)] $x,y$ are in the closure of the same component of
  $\hat \mu-\hat \nu$ or 
\item[(iii)] $x,y$ are in the closure of the same component of
  $\hat \nu-\hat \mu$ or
\item[(iv)] $x=y$.
\end{enumerate}
Lemma 6.2 of \cite{CB88} shows that $S/\sim$ is a
finite type suface homeomorphic to $S$, and the 
measured laminations
$\mu,\nu$ project to transverse singular measured foliations
${\cal F}^s,{\cal F}^u$ on $S$ which up to isotopy equal the
vertical and the horizontal measured foliations
of the differential $q$.

It follows from the construction that the switches of $\tau$ project
to singular points of $q$, and any singular point is the image of 
precisely one switch.
Let $b$ be a branch of $\tau$. Then $b$ defines the 
homotopy class with fixed endpoints 
of a geodesic arc $\beta$ for the singular euclidean 
metric $h$ defined by $q$ which connects $x$ to $y$. 
The arc $\beta$ is a concatenation of saddle connections. Since there are
no vertical saddle connection for $q$, 
the geodesic arc $\beta$ is transverse to the
vertical foliation of $q$. Doing this construction with each of 
the branches of $\tau$ yields 
an embedded graph ${\cal G}\subset S$ with the following 
properties.
\begin{enumerate}
\item ${\cal G}$ is homotopic to $\tau$. 
\item The vertices of ${\cal G}$ are precisely the singular points of $q$.
\item The edges of ${\cal G}$ are
geodesic segments for $h$ whose directions are
uniformly bounded away from the vertical direction. 
\end{enumerate}
Note that ${\cal G}$ may not admit
a tangent at its vertices, and in general, ${\cal G}$, is
not a train track.

Now $\phi$ can be represented by a
homeomorphism of $S$
which is smooth outside the singular set of $q$ and 
which expands the 
horizontal foliation of $q$ and contracts the vertical one. Thus 
the image of ${\cal G}$ under $\phi$ is a piecewise geodesic graph
in the singular euclidean surface $(S,h)$ whose edges have directions which are
closer to the horizontal direction than 
the directions of the edges of ${\cal G}$ with respect to the
conformal structure on $S$ defined by $q$ or $h$.
Iteration then yields that as $k\to \infty$, 
up to passing to a subsequence the graphs
$\phi^k({\cal G})$ converge in the Hausdorff topology 
to a closed 
subset of the horizontal foliation for the flat metric $h$.
 Since the edges of $\phi^k{\cal G}$ are geodesics for
$h$ whose lengths tend to infinity with $k$, this set is 
leaf saturated, that is, it is a union of leaves. 
By minimality of the horizontal foliation of $q$, this limit equals
the horizontal foliation of $q$.

The horizontal geodesic lamination $\hat \mu$ of $q$ is obtained
from the horizontal foliation by cutting $S$ open along the
separatrices of the foliation and straightening the complement with
respect to the hyperbolic metric $h^\prime$ on $S$. 
This implies that up to isotopy,  
the hyperbolic straightening of 
$\phi^k({\cal G})$, obtained by replacing
each edge by a geodesic segment for the 
hyperbolic metric $h^\prime$,  
converges as $k\to \infty$ in the Hausdorff topology 
to $\hat \mu$. This shows the lemma.
\end{proof}

The proof of Lemma \ref{attracting} relied on the fact that
given a pseudo-Anosov mapping class $\phi$ and
an abelian or quadratic differential $q$ on the cotangent line
of an axis of $\phi$, the mapping class
$\phi$ can be represented by a homeomorphism
of $S$ which is smooth outside
the singular points of $q$ and preserves the vertical 
and the horizontal measured foliation of the singular euclidean metric
defined by $q$. However, $\phi$ may permute zeros
of $q$ of the same order, it
may permute poles, and if $\phi$ fixes a zero, then is may permute
vertical and horizontal separatrices coming out of the zero and only 
preserve their cyclic order.

We say that $\phi$ \emph{preserves the combinatorics} if
it fixes each singular point of $q$ and preserves each horizontal and
vertical separatrix coming out of a zero. 
This property does not depend on the choice of the differential 
$q$ on the cotangent line of an axis of $\phi$.
As the
number of singular points of $q$, counted with multiplicity, is a
topological invariant, each pseudo-Anosov mapping class
$\phi$ has a positive power which preserves the combinatorics, and the 
degree of this power is bounded from above
by a constant only depending on the Euler characteristic
of $S$. 
Note that similar constraints arise in the 
context of zippered rectangles 
which are used by Veech \cite{V86} to study the 
Teichm\"uller flow.

For a pseudo-Anosov mapping class
$\phi\in {\rm Mod}(S)$
and a differential $q$ on the cotangent line of its axis, 
contained in the preimage $\tilde {\cal Q}$ 
in $\tilde {\cal Q}(S)$ of a component 
${\cal Q}$ of a stratum of quadratic or abelian differentials,
define 
\[ {\cal L\cal T}(\phi)=\{\tau\in
  {\cal L\cal T}(\tilde {\cal Q})\mid q\in {\cal Q}(\tau)\}.\]
Note that this only depends on $\phi$ but not on the choice
of $q$.

The following observation is the main technical result of this section. 
For the purpose of its proof and later use,
define a \emph{splitting and shifting sequence} of a
train track $\tau$ to be a sequence of modifications of $\tau$ 
by splitting or shifting moves. If $\eta$ is carried
by $\tau$ then $\tau$ can be connected to $\eta$ by a 
splitting and shifting sequence \cite{PH92}.

\begin{lemma}\label{traintrackexpansion}
  If $\phi$ is a pseudo-Anosov mapping class which preserves the
  combinatorics, with cotangent line in $\tilde {\cal Q}$,
  then 
\[{\cal L\cal T}(\phi)=\{\tau\in {\cal L\cal T}(\tilde {\cal Q})\mid
\phi(\tau)\prec \tau\}.\] 
\end{lemma}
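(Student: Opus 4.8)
The plan is to show the two inclusions separately. Write $\mu,\nu\in{\cal M\cal L}$ for the supports of the horizontal and vertical measured geodesic laminations of $q$, so that $[\mu],[\nu]$ are the attracting and repelling fixed points of $\phi$ on ${\cal P\cal M\cal L}$, with $\phi(\mu)=\kappa\mu$, $\phi(\nu)=\kappa^{-1}\nu$ for $\kappa>1$ the dilatation, and both $\mu,\nu$ are minimal large geodesic laminations of the same topological type as $\tilde{\cal Q}$. By definition $\tau\in{\cal L\cal T}(\phi)$ means exactly that $\mu$ is carried by $\tau$ and $\nu$ by $\tau^*$; since these conditions only involve $[\mu]$ and $[\nu]$, they do not depend on the chosen point of the cotangent line of the axis (as already noted in the text). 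The inclusion $\{\tau:\phi(\tau)\prec\tau\}\subseteq{\cal L\cal T}(\phi)$ is the easy one: from $\phi(\tau)\prec\tau$ I would deduce $\phi^k(\tau)\prec\tau$ and $\phi^k({\cal V}(\tau))\subseteq{\cal V}(\tau)$ for all $k\ge1$, apply $\phi^k$ to a vertex cycle of $\tau$, and use north--south dynamics of $\phi$ on ${\cal P\cal M\cal L}$ together with the closedness of the cone ${\cal V}(\tau)$ to get $\mu\in{\cal V}(\tau)$; dually $\phi^{-1}(\tau^*)\prec\tau^*$, and the same argument for $\phi^{-1}$ (whose attracting lamination is $[\nu]$) gives that $\nu$ is carried by $\tau^*$, so $q\in{\cal Q}(\tau)$ and $\tau\in{\cal L\cal T}(\phi)$. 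Alternatively this follows from Lemma~\ref{perron} applied to a power $\phi^k$ with $A(\phi^k,\tau)=A(\phi,\tau)^k$ positive, since $\phi^k$ and $\phi$ share an axis.

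For the reverse inclusion, fix $\tau\in{\cal L\cal T}(\phi)$. First I would record that, $\mu$ being minimal of the type of $\tau$, a carrying map $\mu\to\tau$ is onto (Remark~\ref{fullnecessary}), so the transverse measure of $\mu$ is positive on every branch of $\tau$; since $A(\phi,\tau)$ sends the weight vector of $\mu$ to that of $\phi(\mu)=\kappa\mu$, this vector is a strictly positive eigenvector of the nonnegative integral matrix $A(\phi,\tau)$ with eigenvalue $\kappa$. Next I would build a two--sided infinite sequence $(\tau_i)_{i\in\mathbb Z}$ inside ${\cal L\cal T}(\phi)$ through $\tau=\tau_0$: forward, using the canonical splitting and shifting sequence of $\tau$ directed by $\mu$ --- each $\tau_i$ still carries $\mu$, and since a split enlarges the cone ${\cal V}^*$ of the dual bigon track \cite{PH92}, each $\tau_i^*$ still carries $\nu$, so $q\in{\cal Q}(\tau_i)$ and $\tau_i\in{\cal L\cal T}(\tilde{\cal Q})$ by Lemma~\ref{splitinv}; backward, using the canonical split directed by $\nu$ applied to the dual bigon tracks, which produces $\tau_{-1}\succ\tau_0$ with $\tau_{-1}$ still carrying $\mu$ and $\tau_{-1}^*$ still carrying $\nu$, and where $\tau_{-1}\in{\cal L\cal T}(\tilde{\cal Q})$ follows because $q$ then lies in the open subset ${\cal Q}(\tau_{-1})\cap\tilde{\cal Q}(m_1,\dots,m_\ell;-p_1,p_2)$ identified in the proof of Proposition~\ref{structure} (using that $\mu\in{\cal L\cal L}(m_1,\dots,m_\ell;-p_1,p_2)$ and Lemma~\ref{containedinstratum}). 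Iterating, one gets $(\tau_i)_{i\in\mathbb Z}\subset{\cal L\cal T}(\phi)$ with $\tau_{i+1}$ the $\mu$-directed split of $\tau_i$.

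Then, since $[\phi(\mu)]=[\mu]$ and $[\phi(\nu)]=[\nu]$, the class $\phi$ carries this canonical two--sided sequence to the analogous sequence through $\phi(\tau)$; because $\mu$ and $\nu$ are uniquely ergodic, the forward $\mu$-directed splitting sequences through $\tau$ and $\phi(\tau)$ eventually agree, and so do the backward ($\nu$-directed, dual) ones, whence the two sequences coincide. So $\phi$ acts on $(\tau_i)_{i\in\mathbb Z}$ as a shift $\tau_i\mapsto\tau_{i+N}$ for a fixed $N\in\mathbb Z$. I would then rule out $N=0$ (else $\phi$ fixes $\tau$ and $A(\phi,\tau)$ is an invertible nonnegative integral matrix, i.e.\ a permutation matrix, which cannot have eigenvalue $\kappa>1$) and $N\le-1$ (else $\phi^{-1}(\tau_i)=\tau_{i+|N|}\prec\tau_i$, so $A(\phi^{-1},\tau_i)$ is a nonnegative integral matrix with the strictly positive eigenvector $\mu|_{\tau_i}$ of eigenvalue $\kappa^{-1}<1$; but a nonnegative integral matrix with a strictly positive eigenvector has no zero row, hence spectral radius $\ge1$, while by Perron--Frobenius the eigenvalue of a strictly positive eigenvector equals the spectral radius --- a contradiction). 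Hence $N\ge1$ and $\phi(\tau)=\phi(\tau_0)=\tau_N\prec\tau_0=\tau$.

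The hardest part, where I expect to spend the most care, is the claim that the canonical $\mu$-directed splitting sequence is two--sided infinite and, together with its $\nu$-directed dual continuation, is precisely the sequence canonically attached to the binding pair $(\mu,\nu)$ --- so that $\phi$ acts on it as a shift with \emph{no} transient part. This is exactly what lets the conclusion $\phi(\tau_i)\prec\tau_i$ be pulled back to $i=0$ rather than only to large $i$; establishing it cleanly will require the combinatorics of splitting versus collapsing of dual bigon tracks from \cite{PH92,H09} and the unique ergodicity of pseudo-Anosov laminations. The remaining ingredients --- stability of ${\cal L\cal T}(\tilde{\cal Q})$ under the relevant moves (Lemma~\ref{splitinv}, Proposition~\ref{structure}, Lemma~\ref{containedinstratum}), the Perron--Frobenius sign argument, and the easy inclusion --- are routine.
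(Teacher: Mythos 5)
Your easy inclusion (north–south dynamics on ${\cal P\cal M\cal L}$, closedness of the cone, dually for $\phi^{-1}$ and $\tau^*$) matches the paper's argument, which invokes the Brouwer fixed point theorem on $P{\cal V}(\tau)$ to the same effect. The hard inclusion, however, is where your proposal diverges, and it has several genuine gaps.

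The pivotal step in your proposal is the existence of a ``canonical'' bi-infinite $\mu$-directed splitting sequence $(\tau_i)_{i\in\mathbb{Z}}$ through $\tau$, on which $\phi$ acts as a shift. This is not established, and it is not a minor detail. First, the forward $\mu$-split is not canonical when $\tau$ has several large branches: one must either specify an order or do a full split, and you specify neither. Second, and more seriously, even granting a canonical forward sequence, the claim that the sequences through $\tau$ and $\phi(\tau)$ ``eventually agree'' is much stronger than unique ergodicity gives you. Unique ergodicity of $\mu$ yields that two tracks carrying $\mu$ have a common \emph{descendant} carrying $\mu$ (both carry a common track after enough splits), but it does not say the two splitting sequences literally merge into a single sequence of marked train tracks. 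Third, the backward extension via ``$\nu$-directed duals'' is even less canonical — collapsing is a many-to-one operation and there is no preferred preimage. Finally, your paragraph invoking $A(\phi,\tau)$ and its Perron--Frobenius eigenvector at the start of the hard direction presupposes $\phi(\tau)\prec\tau$, which is exactly the conclusion being sought.

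There is a telling symptom: your argument for the hard inclusion never uses the hypothesis that $\phi$ \emph{preserves the combinatorics}, yet this hypothesis is essential. If $\phi$ permutes zeros of the same order or permutes separatrices, the equality ${\cal L\cal T}(\phi)=\{\tau\mid\phi(\tau)\prec\tau\}$ can fail, and the paper is explicit that the general inclusion $\{\tau\mid\phi(\tau)\prec\tau\}\subseteq{\cal L\cal T}(\phi)$ holds for all pseudo-Anosovs while the reverse needs the combinatorial hypothesis. The paper's own proof uses the hypothesis in a concrete way: it defines ${\cal E}$ to be the set of train track expansions of $\phi$ in ${\cal L\cal T}(\tilde{\cal Q})$, shows ${\cal E}$ is nonempty, $\langle\phi\rangle$-invariant, and closed under shifts, and then proves that ${\cal E}$ is closed under $\mu$-splits by a cutting-arc analysis inside a foliated neighborhood of $\tau$. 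That analysis needs $\phi$ to fix each singular point and each separatrix in order to identify the image of a large branch under the carrying map $\phi(\tau)\to\tau$ and derive a contradiction (via a collision that would drop the combinatorial type). Once ${\cal E}$ is closed under $\mu$-splits, the proof finishes by showing that any $\eta\in{\cal L\cal T}(\phi)$ satisfies $\eta\prec\phi^{-k}\tau$ for some $k$ and some $\tau\in{\cal E}$ — using Lemma \ref{attracting} and the $a$-long/$\epsilon$-follows criterion from \cite{H09} — so that $\eta$ is reached from an element of ${\cal E}$ by a $\mu$-splitting and shifting sequence and hence lies in ${\cal E}$. This sidesteps any need for a single canonical bi-infinite sequence and places the combinatorics hypothesis exactly where it is used. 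To salvage your approach, you would need to both prove the bi-infinite shift claim and explain how the combinatorics hypothesis enters it; I would instead recommend adopting the paper's propagation argument for ${\cal E}$ along $\mu$-splits.
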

\begin{proof} It is shown in 
\cite{P88} that every pseudo-Anosov mapping class $\phi$ has a train track 
expansion $\tau$. Furthermore, we may assume that 
$\tau\in {\cal L\cal T}(\tilde {\cal Q})$ where $\tilde {\cal Q}$ is 
a component of a stratum in the Teichm\"uller space of 
abelian or quadratic differentials which contains the cotangent
line of the axis of $\phi$.

We show first that such a train track $\tau$ is contained in 
${\cal L\cal T}(\phi)$. To see this note that by 
induction, we have $\phi^k(\tau)\prec \tau$ and hence 
$\phi^k {\cal V}(\tau)\subset {\cal V}(\tau)$
for all $k\geq 1$. Since 
${\cal V}(\tau)$ is a cone over a compact convex polyhedron and hence
its projectivization $P{\cal V}(\tau)$ 
is homeomorphic to a closed topological ball, and since
the action of $\phi$ on ${\cal V}(\tau)$ commutes with rescaling
and hence descends to an action on $P{\cal V}(\tau)$, 
the
Brouwer fixed point theorem yields that $\phi$ has a fixed point in  
$P{\cal V}(\tau)$. 
But $\phi$ acts on 
${\cal P\cal M\cal L}\supset P{\cal V}(\tau)$ with north-south dynamics and therefore this
fixed point has to be the attracting fixed point of $\phi$. 
As a consequence, the support $\mu$ of the 
horizontal measured geodesic lamination of 
a point $q\in \tilde {\cal Q}$ on the
cotangent line of the axis of $\phi$ is carried by 
$\tau$.

Reversing the role of $\tau$ and its dual bigon track  and 
replacing $\phi$ to $\phi^{-1}$, this argument also shows that
the vertical measured geodesic lamination 
of $q$ is carried by the dual 
bigon track $\tau^*$
of $\tau$ and hence $\tau\in {\cal L\cal T}(\phi)$ as claimed.

The inclusion of the set of train track expansions of $\phi$ into 
${\cal L\cal T}(\phi)$ holds true for all pseudo-Anosov mapping classes.
To show the reverse inclusion we assume from now on that
the pseudo-Anosov mapping class $\phi$ preserves the combinatorics.
Our goal is 
to show that $\phi(\tau)\prec \tau$ for every
$\tau\in {\cal L\cal T}(\phi)$.

To this end let as before $\mu$ be the support of the horizontal
measured geodesic lamination of the differential $q$.  
Then $\mu$ is minimal and filling, and its combinatorial type
coincides with the combinatorial type of any 
$\tau\in {\cal L\cal T}(\phi)$. If
$\eta$ is obtained from 
$\tau\in {\cal L\cal T}(\phi)$
by a \emph{$\mu$-split}, that is, by a split
with the 
 property that $\eta$ carries $\mu$, then
 $\eta\in {\cal L\cal T}(\phi)$. 

 The infinite cyclic subgroup $\Gamma$ of ${\rm Mod}(S)$ generated by 
$\phi$ acts on 
the set ${\cal L\cal T}(\phi)\subset {\cal L\cal T}(\tilde {\cal Q})$
as a group of permutations. 
 Let ${\cal E}\subset {\cal L\cal T}(\phi)$ be the subset of all train track
 expansions of $\phi$. We noted above that 
 this set is non-empty. Moreover, it is clearly 
 $\Gamma$-invariant and 
 invariant under the shift operation on train tracks.
 We claim that if $\tau\in {\cal E}$ and if 
 $\eta$ is obtained from $\tau$ by a $\mu$-split, 
 then $\eta\in {\cal E}$. 

Thus let $\tau\in {\cal E}$ and let $e$ be a large branch of $\tau$.
We know that 
$\phi(\tau)\prec\tau$. As a consequence,
if $\phi(\tau)$ is carried by the $\mu$-split 
$\sigma$ of $\tau$ at $e$,
then since the $\mu$-split $\phi(\sigma)$
of $\phi(\tau)$ at the branch
$\phi(e)$ is 
carried by $\phi(\tau)$,
we have $\phi(\sigma)\prec \sigma$ and we are done. So assume that 
$\phi(\tau)$ is not carried by the $\mu$-split
of $\tau$ at $e$. 

Let 
$A$ be a \emph{foliated neighborhood} of $\tau$ in $S$; this is 
a neighborhood of $\tau$ which is foliated by compact arcs, called 
\emph{ties}, which are 
transverse to $\tau$ and intersect
$\tau$ in precisely one point. There is a collapsing map  
$F:A\to \tau$ which collapses each of these ties to a point. 
Since $\phi(\tau)\prec \tau$, the train track $\phi(\tau)$ can be isotoped 
to be embedded in $A$ and transverse to the ties. 
The restriction of the collapsing map $F:A\to \tau$ to $\phi(\tau)$ is a 
carrying map. We may assume that no switch of $\phi(\tau)$ is mapped
by $F$ to a switch of $\tau$. 

Let $v$ be an endpoint of the large branch $e$. A \emph{cutting arc} 
for $\phi(\tau)$ and $v$ is an embedded arc 
$\gamma:[0,d]\to F^{-1}(e)$ beginning at $\gamma(0)=v$
which is transverse to the ties of $A$, which is disjoint from $\phi(\tau)$
except possibly at its endpoint and such that the length of 
$F(\gamma)\subset e$ is maximal among arcs with these properties. 
The maximality condition implies that either $F(\gamma[0,d])=e$, that is,
$\gamma$ crosses through the foliated rectangle $F^{-1}(e)$, or 
that $\gamma(d)$ is a switch of $\phi(\tau)$ contained in the interior of 
$F^{-1}(e)$, and the component of $S-\phi(\tau)$ which contains 
$v$ has a cusp at $\gamma(d)$. In particular, since 
$\phi$ preserves the combinatorics by assumption, if 
$\gamma(d)$ is a switch contained in the interior of 
$F^{-1}(e)$, then we have $\gamma(d)=\phi(v)$ and 
$\gamma(d)$ is an endpoint of the large branch $\phi(e)$. 

By Lemma A.2 of \cite{H09}, if $\phi(\tau)$ is not carried by a split of 
$\tau$ at $e$, then the following holds true. Let $v,v^\prime$ be the 
endpoints of $e$ and let $\gamma:[0,d]\to F^{-1}(e)$,
$\gamma^\prime:[0,d^\prime]\to F^{-1}(e)$ be the cutting arcs for 
$\phi(\tau)$ and $v,v^\prime$. Then $\gamma(d),\gamma^\prime(d^\prime)$
are switches of $\phi(\tau)$ contained in the interior of 
$F^{-1}(e)$, and there is a trainpath
$\rho:[0,m]\to \phi(\tau)\cap F^{-1}(e)$ connecting 
$\rho(0)=\gamma(d)$ to $\rho(1)=\gamma^\prime(d^\prime)$.

Since $\phi$ preserves the combinatorics, it follows from the above 
discussion that the trainpath $\rho[0,m]$ consists of the single 
large branch $\phi(e)$. Then
a collision $\beta$ of $\tau$ at $e$, that is, a split followed
by removal of the diagonal, carries $\phi(\beta)$. 
As a consequence, this collision is a train track expansion of
$\phi$. 

On the other hand, 
the combinatorial type of $\beta$ does not coincide with the
combinatorial type of $\tau$. More precisely, $\beta$ does not carry
any geodesic lamination whose topological type equals the 
topological type of the support of the horizontal measured geodesic
lamination of $q$. Together this is 
a contradiction to the beginning of this proof.

To summarize, we showed so far that if $\tau\in {\cal E}$ then so is any 
$\mu$-split of $\tau$. 
To complete the proof of the lemma, it now suffices to 
show the following. Let $\eta\in {\cal L\cal T}(\phi)$ 
be arbitrary; then there exists $\tau\in {\cal E}$ such that
$\eta\prec \tau$. Namely, by \cite{PH92}, in this case 
$\tau$ can be connected to $\eta$ by a splitting and shifting sequence,
and since $\eta$ carries $\mu$, 
by induction and what we have established so far, 
any train track in the sequence is contained in ${\cal E}$.

For $\tau\in {\cal E}$ we have 
$\phi^{-k+1}(\tau)\prec \phi^{-k}(\tau)$ 
and hence
$\phi^{-k}(\tau)\in {\cal E}$ for all $k$. By invariance under the 
action of $\Gamma$, it thus suffices to show that there is some 
$k>0$ such that $\phi^k\eta\prec \tau$. By Lemma 3.2 of 
\cite{H09}, this follows if for a complete 
finite volume hyperbolic metric on $S$, a given number $\epsilon >0$
and all sufficiently large 
$k$, the train track $\phi^k\eta$ is $a$-long and $\epsilon$-follows 
$\mu$ 
where $a>0$ only depends on the hyperbolic metric.
That this is indeed the case was established in Lemma \ref{attracting}. 
\end{proof}

For a component
$\tilde{\cal Q}$ of a stratum 
$\tilde{\cal Q}(m_1,\dots,m_\ell;-n,p)$ we write as before 
$\tau\in {\cal L\cal T}(\tilde {\cal Q})$ if 
$\tau\in {\cal L\cal T}(m_1,\dots,m_\ell;-n,p)$
and if moreover the set ${\cal Q}(\tau)\subset \tilde {\cal Q}(S)$ 
is contained in the
closure of $\tilde{\cal Q}$. 
Denote by ${\rm Stab}(\tilde {\cal Q})$
the stabilizer 
of $\tilde {\cal Q}$ in ${\rm Mod}(S)$.


\begin{lemma}\label{carryingcontrol}
  For every component ${\cal Q}$
  of a stratum there exists a number 
$\kappa=\kappa({\cal Q})>0$ with the following property. 
If $\tau,\sigma\in {\cal L\cal T}(\tilde {\cal Q})$ and
if $\sigma\prec \tau$ then there exists 
some $\phi\in {\rm Stab}(\tilde {\cal Q})$ such that 
$\phi(\tau)\prec \sigma$ and that $\sigma$ can 
be connected to $\phi(\tau)$ by a shifting and splitting 
sequence of length at most $\kappa$.
\end{lemma}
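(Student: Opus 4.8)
The plan is to deduce the lemma from two facts: that ${\cal L\cal T}(\tilde{\cal Q})$ decomposes into finitely many orbits under ${\rm Stab}(\tilde{\cal Q})$ (this is the finiteness of the set ${\cal L\cal T}({\cal Q})$ of combinatorial models established in Section~\ref{strata}), and that whenever $\sigma\prec\tau$ with $\sigma,\tau\in{\cal L\cal T}(\tilde{\cal Q})$ there is \emph{some} $\phi\in{\rm Stab}(\tilde{\cal Q})$ with $\phi(\tau)\prec\sigma$. Given the latter, $\phi(\tau)$ lies in the ${\rm Stab}(\tilde{\cal Q})$-orbit of $\tau$ and is carried by $\sigma$, so by \cite{PH92} it is joined to $\sigma$ by a splitting and shifting sequence; the only remaining issue is to bound its length independently of $\tau$ and $\sigma$, which will come from finiteness. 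The substantive step is therefore the existence of $\phi$.

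For existence I would produce a pseudo-Anosov $\phi\in{\rm Stab}(\tilde{\cal Q})$ for which \emph{both} $\tau$ and $\sigma$ are train track expansions, i.e. for which a differential $q_\phi$ on the cotangent line of the axis of $\phi$ satisfies $q_\phi\in{\cal Q}(\tau)\cap{\cal Q}(\sigma)$. Here one uses the asymmetry of carrying: as noted in the proof of Lemma~\ref{perron}, $\sigma\prec\tau$ forces $\tau^*\prec\sigma^*$, hence ${\cal V}(\sigma)\subseteq{\cal V}(\tau)$ and ${\cal V}^*(\tau)\subseteq{\cal V}^*(\sigma)$. Choose a minimal large geodesic lamination $\mu$ of the topological type of $\tau$ carried by $\sigma$; being of the full type it lies in the interior of ${\cal V}(\sigma)$ and also of ${\cal V}(\tau)$. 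Dually choose a minimal large $\nu$ of the same type with support carried by $\tau^*$, lying in the interior of ${\cal V}^*(\tau)$ and of ${\cal V}^*(\sigma)$, with $\mu,\nu$ binding $S$ (possible by the density of such laminations in ${\cal V}^*(\tau)$ used in the proof of Proposition~\ref{structure} together with the minimality and filling of $\mu$). By Lemma~\ref{containedinstratum} the differential $q(\mu,\nu)$ lies in the stratum, hence in $\tilde{\cal Q}$, and by the inclusions above it lies in ${\cal Q}(\tau)\cap{\cal Q}(\sigma)$; using the structural description in Propositions~\ref{structure} and~\ref{all} one checks that as $\mu,\nu$ range over the interiors of the two cones these differentials sweep out a nonempty open subset of $\tilde{\cal Q}$ contained in ${\cal Q}(\tau)\cap{\cal Q}(\sigma)$. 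Since periodic orbits of $\Phi^t$ are dense in ${\cal Q}$ (\cite{H13}), there is a pseudo-Anosov $\phi\in{\rm Stab}(\tilde{\cal Q})$ whose axis meets this open set, so that $\tau,\sigma\in{\cal L\cal T}(\phi)$. Then the attracting lamination $\mu^+$ of $\phi$ is carried by $\sigma$ and is minimal large of its type, while $\tau\in{\cal L\cal T}(\phi)$ lets us invoke Lemma~\ref{attracting}: for $k$ large, $\phi^k(\tau)$ is $a$-long and $\epsilon$-follows $\mu^+$, so by Corollary~\ref{followgood} (as in the last paragraph of the proof of Lemma~\ref{traintrackexpansion}) $\phi^k(\tau)\prec\sigma$. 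Replacing $\phi$ by $\phi^k$ gives the required element of ${\rm Stab}(\tilde{\cal Q})$.

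For the uniform bound, for $\sigma,\tau\in{\cal L\cal T}(\tilde{\cal Q})$ let $d(\tau,\sigma)$ denote the minimal length of a splitting and shifting sequence from $\sigma$ to an element of ${\rm Stab}(\tilde{\cal Q})\cdot\tau$; by \cite{PH92} this is finite exactly when some translate of $\tau$ is carried by $\sigma$, which by the existence step holds whenever $\sigma\prec\tau$. Since carrying, splitting and shifting moves, and the orbit ${\rm Stab}(\tilde{\cal Q})\cdot\tau$ are all equivariant under ${\rm Stab}(\tilde{\cal Q})$, and since ${\rm Stab}(\tilde{\cal Q})\cdot(g\tau)={\rm Stab}(\tilde{\cal Q})\cdot\tau$ for $g\in{\rm Stab}(\tilde{\cal Q})$, the number $d(\tau,\sigma)$ depends only on the ${\rm Stab}(\tilde{\cal Q})$-orbits of $\tau$ and of $\sigma$. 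As there are finitely many such orbits, I would set $\kappa=\kappa({\cal Q})$ to be the maximum of $d(\tau,\sigma)$ over the finitely many pairs of orbits for which it is finite; then for any $\sigma\prec\tau$ the existence step yields $\phi\in{\rm Stab}(\tilde{\cal Q})$ with $\phi(\tau)\prec\sigma$, and since $d(\tau,\sigma)\le\kappa$ one can choose such $\phi$ so that $\sigma$ is joined to $\phi(\tau)$ by a splitting and shifting sequence of length at most $\kappa$.

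The main obstacle is the existence step, and specifically the point that a \emph{single} pseudo-Anosov can be arranged to have both $\tau$ and $\sigma$ among its train track expansions; this is where the inclusions ${\cal V}(\sigma)\subseteq{\cal V}(\tau)$ and ${\cal V}^*(\tau)\subseteq{\cal V}^*(\sigma)$, Lemma~\ref{containedinstratum}, and the density of periodic orbits in ${\cal Q}$ all enter. Checking that the differentials $q(\mu,\nu)$ really lie in the component $\tilde{\cal Q}$ and fill out an open subset of it will require the openness statements implicit in the proofs of Propositions~\ref{structure} and~\ref{all}; once this is in place, the passage to $\phi^k(\tau)\prec\sigma$ via Lemma~\ref{attracting} and Corollary~\ref{followgood} and the final orbit-counting argument are routine.
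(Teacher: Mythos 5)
Your proof is correct and follows essentially the same route as the paper's: find a pseudo-Anosov $\phi\in{\rm Stab}(\tilde{\cal Q})$ whose axis passes through ${\cal Q}(\tau)\cap{\cal Q}(\sigma)$ (using that this set contains an open subset of $\tilde{\cal Q}$ together with density of periodic orbits), apply Lemma~\ref{attracting} and Lemma~3.2 of~\cite{H09} to get $\phi^k(\tau)\prec\sigma$ for large $k$, and then use finiteness of ${\rm Mod}(S)$-orbits in ${\cal L\cal T}(\tilde{\cal Q})$ together with equivariance to obtain the uniform bound $\kappa$. You spell out the openness of ${\cal Q}(\tau)\cap{\cal Q}(\sigma)$ and the orbit-counting more carefully than the paper, which states these steps tersely, but the underlying argument is the same.
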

\begin{proof}
  Let $\tau,\sigma
  \in {\cal L\cal T}(\tilde {\cal Q})$ with
  $\sigma\prec\tau$. Then ${\cal Q}(\tau)\cap {\cal Q}(\sigma)$ 
  contains an open subset of $\tilde {\cal Q}$.
Since cotangent lines of axes of pseudo-Anosov
mapping classes are dense in $\tilde {\cal Q}$, there exists a pseudo-Anosov
mapping class $\phi\in {\rm Stab}(\tilde {\cal Q})$
and a differential $q\in {\cal Q}(\tau)\cap {\cal Q}(\sigma)$
on the contangent line of an axis of $\phi$.
Lemma \ref{attracting} and Lemma 3.2 of \cite{H09} together show
that there exists some $k\geq 1$ such that
$\phi^k(\sigma)\prec\tau$. Since the number of
${\rm Mod}(S)$-orbits of train tracks in
${\cal L\cal T}(\tilde {\cal Q})$ is finite, by invariance this suffices for the
proof of the lemma.
\end{proof}

Let ${\cal T}(S)$ be the Teichm\"uller space of $S$. 
The \emph{translation length} of a pseudo-Anosov element
$\phi\in {\rm Mod}(S)$ is the minimal 
Teichm\"uller distance
between a point $x\in {\cal T}(S)$ and its image under $\phi$. 
The translation length of $\phi$ only depends on the
conjugacy class of $\phi$, and it equals 
the length of the periodic orbit of the Teichm\"uller flow 
which is the projection of the cotangent line of an axis of $\phi$
to the moduli space of quadratic or abelian differentials. 

Our goal is to construct periodic orbits in the thin part of 
a stratum of abelian or quadratic differentials by deforming 
periodic orbits in components of the principal 
boundary of the stratum.
To implement this idea we evoke 
counting results in components of the principal boundary. 
We begin with the setup suitable for our goal. 

The moduli space of abelian or quadratic differentials is the
quotient of the Teichm\"uller space of abelian or quadratic 
differentials by the action of the mapping class group
${\rm Mod}(S)$ of $S$.
To avoid some technical difficulties, we replace ${\rm Mod}(S)$
by a torsion free finite index subgroup and replace 
each component ${\cal Q}$ 
of a stratum by the corresponding finite
cover in the sense of orbifolds. Such a component is a smooth manifold, 
and every 
periodic orbit for the Teichm\"uller flow in such a cover, again 
denoted by ${\cal Q}$, 
defines a nontrivial free homotopy class in ${\cal Q}$. 
No two distinct such orbits defined the same free homotopy class.

There is a further finite cover of a component ${\cal Q}$ defined as follows. 
Number all zeros or poles for a differential $q\in {\cal Q}$ 
which occur with 
multiplicity greater than one in an arbitrary way. Number furthermore the
horizontal separatrices of a differential $q$ at each singular point in 
a counterclockwise order. 
%
This construction defined a fiber bundle over
${\cal Q}$ with finite fiber.
A point in the fiber over $q$ determines a numbering of the zeros and poles
of $q$ and a horizontal separatrix coming out of each zero. 
If we denote by $\hat {\cal Q}$ a connected component of this fiber bundle, 
then $\hat {\cal Q}\to {\cal Q}$ is a finite connected covering. 
The Teichm\"uller flow $\Phi^t$ 
naturally lifts to a flow on this covering, 
and the flow preserves the lift 
of the Masur Veech measure. Furthermore, a periodic orbit
for $\Phi^t$ in $\hat {\cal Q}$ is defined by a pseudo-Anosov 
mapping class which preserves the combinatorics and, in 
particular, fixes singular points. From now on we 
always denote by $\hat {\cal Q}$ this finite covering (in the orbifold
sense) of the component ${\cal Q}$.

Call a point $q\in 
\hat {\cal Q}$ \emph{recurrent} if 
for every neighborhood $U$ of $q$ there is a sequence of 
times $t_i\to \infty, s_j\to -\infty$ such that
$\Phi^{t_i}q\in U, \Phi^{s_j}q\in U$.
The set of such points has full measure for every 
$\Phi^t$-invariant 
probability measure on $\hat {\cal Q}$. 

Denote by $\lambda$ the normalized lift of the Masur Veech measure
on $\hat {\cal Q}$. 
Given a contractible set $U\subset \hat {\cal Q}$, a point 
$u\in U$ and a number $T>>0$ such that 
$\Phi^Tu\in U$, a \emph{characteristic curve}
of the \emph{pseudo-orbit}
$(u,\Phi^Tu)$ is a closed curve containing the orbit segment
$\cup_{0\leq t\leq T}\Phi^tu$ which is obtained by connecting 
$\Phi^Tu$ to $u$ with an arc which is entirely contained in $U$. 
The following
result is Proposition 4.5 of \cite{H22}.

\begin{proposition}\label{prop45}
Let $q\in \hat {\cal Q}$  be a recurrent point. 
Then for every neighborhood $U$ of $q$ 
and for all $\delta >0,\eta>0$ there are contractible closed 
neighborhoods $Z_1\subset Z_2\subset U$ of $q$ and 
there is a Borel set $Z_0\subset Z_1$ 
and a number $R_0>0$ with the following properties.
\begin{enumerate}
\item $\lambda(Z_2)\leq (1-\delta)^{-1}\lambda(Z_0)$.
\item For some integer $m>1/\delta$,  
a $\Phi^t$-orbit intersects $Z_1,Z_2$ in arcs of length 
$2t_0<\eta/m$. 
\item The set 
\[Z_3=\cup_{-t_0(m-2)\leq t\leq t_0(m-2)}\Phi^tZ_1\]
is contained in the interior of 
\[Z_4=\cup_{-t_0m\leq t\leq t_0m}\Phi^tZ_2\subset U.\] 
\item Let $z\in Z_0$ and let $T>R_0$ be such that
$\Phi^Tz\in Z_3$. Then there 
exists a path connected set $B(z)\subset Z_2$ containing $z$, 
with 
\[\quad \quad\Phi^TB(z)\subset Z_4\text{ and } 
\lambda(B(z))\in [(1-\delta)e^{-hT}\lambda(Z_1),
(1-\delta)^{-1}e^{-hT}\lambda(Z_1)].\] 
There is a periodic orbit $\gamma$ for $\Phi^t$ of length contained in
$[T-mt_0,T+mt_0]$ such that for each $u\in B(z)$, 
the characteristic curve of the pseudo-orbit 
$(u,\Phi^Tu)$ with endpoints in $Z_4$ 
determines the same component $\gamma(z,T)$ of 
$\gamma\cap Z_4$.
If $u\in Z_0-B(z)$ and if $\Phi^Tu\in Z_3$, then the
arc $\gamma(u,T)$ is disjoint from $\gamma(z,T)$. 
\end{enumerate}
\end{proposition}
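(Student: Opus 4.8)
The plan is to obtain Proposition \ref{prop45} from the non-uniform hyperbolicity of the Teichm\"uller flow on $\hat{\cal Q}$ established in \cite{H22}, together with the fact that the normalized Masur--Veech measure $\lambda$ is the measure of maximal entropy $h=h({\cal Q})$ and carries a local product structure with absolutely continuous holonomies between the strong stable and strong unstable foliations; indeed the statement is quoted verbatim as Proposition 4.5 of \cite{H22}, so concretely the proof is a reference to that work, and what follows is a description of the mechanism. The key analytic input is that, for $\lambda$, the unstable Jacobian of $\Phi^T$ equals $e^{hT}$.

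First, using the recurrence of $q$, I would fix a relatively compact neighborhood $U$ of $q$ on which the flow admits Pesin-type charts: in these coordinates the local strong stable and strong unstable manifolds through points near $q$ are uniformly transverse Lipschitz graphs, the stable and unstable holonomies along local transversals are absolutely continuous, and the conditional measures of $\lambda$ along strong unstable plaques are transformed under $\Phi^T$ by the multiplicative factor $e^{hT}$, with an error that can be pushed into the interval $[(1-\delta)^{1/2},(1-\delta)^{-1/2}]$ once the chart is small and $T$ is large. Inside such a chart I would take nested flow boxes $Z_1\subset Z_2\subset U$ subordinate to the product structure, with $Z_2$ only slightly fatter than $Z_1$ in the stable and unstable directions, and let $Z_0\subset Z_1$ be the subset on which the Fubini decomposition of $\lambda$ into unstable plaque measures and a transverse measure behaves well; then the inequality $\lambda(Z_2)\le (1-\delta)^{-1}\lambda(Z_0)$ is arranged by choosing the relative thickness of the boxes. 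Properties (2) and (3) are immediate once the flow-direction heights of the boxes and the numbers $t_0$ and $m$ are chosen so that $2mt_0<\eta$ and the flow-out $Z_3$ of $Z_1$ sits in the interior of the flow-out $Z_4$ of $Z_2$.

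The core of the argument is the quantitative closing lemma of part (4). Given $z\in Z_0$ with $\Phi^Tz\in Z_3$ and $T>R_0$, the orbit arc $\{\Phi^tz\mid 0\le t\le T\}$ is an almost-closed pseudo-orbit inside $U$. By the uniform transversality of the local stable and unstable manifolds in the chart and the exponential contraction along the stable direction, the assignment sending a point $w$ in the local strong unstable plaque of $z$ to the point where the forward orbit of $w$ first meets the local strong stable plaque of $z$ is a well-defined contraction of that plaque into itself; its unique fixed point lies, after an adjustment of the flow parameter by at most $mt_0$, on a genuine periodic orbit $\gamma$ of period in $[T-mt_0,T+mt_0]$ that shadows the pseudo-orbit and crosses $Z_4$ in a well-defined component $\gamma(z,T)$. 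The shadowing set $B(z)\subset Z_2$ is the set of $u$ near $z$ whose orbit over the same time produces the same component $\gamma(z,T)$; in the chart it is a flow box whose extent in the unstable direction is compressed to size comparable to $e^{-hT}$ by the requirement $\Phi^TB(z)\subset Z_4$, whence $\lambda(B(z))$ equals $e^{-hT}\lambda(Z_1)$ up to the factor $(1-\delta)^{\pm1}$, using exactly that the unstable Jacobian of $\Phi^T$ for $\lambda$ is $e^{hT}$. Finally, the local strong unstable plaques attached to the periodic orbits crossing $Z_4$ partition the relevant part of $Z_0$, and two such plaques belonging to different crossings are disjoint; hence for $u\in Z_0\setminus B(z)$ with $\Phi^Tu\in Z_3$ the component $\gamma(u,T)$ is disjoint from $\gamma(z,T)$.

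The main obstacle is making all of this quantitative in the non-uniformly hyperbolic setting: one must control the size of the Pesin chart and the distortion of the unstable Jacobian of $\Phi^T$ along the long orbit segment from $z$ to $\Phi^Tz$, which is not uniform but depends on the recurrence rate of $q$. This forces one to work at a recurrent point and to track how often the orbit visits the good set where the hyperbolicity estimates are in force, exactly as carried out in the proof of Proposition 4.5 of \cite{H22}, from which the present statement is taken.
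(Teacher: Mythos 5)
The paper does not prove Proposition \ref{prop45} at all: it is stated with the single sentence ``The following result is Proposition~4.5 of \cite{H22}'' and no further argument, so your decision to treat the proposition as a citation to \cite{H22} is exactly what the paper does. Your accompanying sketch of the mechanism (Pesin charts at a recurrent point, the local product structure of the Masur--Veech measure with unstable Jacobian $e^{hT}$, and a quantitative closing/shadowing argument controlling distortion via returns to a good set) is a fair heuristic description of what is involved and correctly flags the non-uniform hyperbolicity as the source of the technical difficulty, but since the paper itself gives no proof there is nothing further here to compare against.
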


\begin{remark}\label{good}
In the formulation of Proposition 4.5 of \cite{H22}, 
the point $q$ is required to be a good recurrent point.
The additional constraint comes from the difficulty that
a component of a stratum is not a manifold. 
In the finite manifold cover  $\hat {\cal Q}$ of ${\cal Q}$,
every point is good. 
\end{remark}

For a set 
${\cal P\cal A}$ of conjugacy classes of pseudo-Anosov elements
in ${\rm Mod}(S)$ and for 
$R>0$ let
$n({\cal P\cal A},R)$ be the number of elements in 
${\cal P\cal A}$ consisting of
conjugacy classes of translation length at most $R$.
We use Proposition \ref{prop45} to establish 
a counting result for periodic orbits in components of strata
with combinatorial constraints.

\begin{proposition}\label{countone} 
For any component $\tilde {\cal Q}$
of the stratum 
$\tilde{\cal Q}(m_1,\dots,m_\ell;-m,p)$ there are numbers 
$d=d(\tilde {\cal Q})>0$, $R_0=R_0(\tilde {\cal Q})>0$ with the following
properties. 
Let $\tau\in {\cal L\cal T}(\tilde {\cal Q})$ and let 
${\cal P\cal A}(\tau,d)$ be the set 
of all conjugacy classes of pseudo-Anosov elements 
$\phi\in {\rm Stab}(\tilde {\cal Q})$ 
with the following properties.
\begin{enumerate}
\item[a)] $\phi(\tau)\prec\tau$.
\item[b)] The matrix $A(\phi,\tau)$ is positive, 
and the ratios of the entries
of $A(\phi,\tau)$ are bounded from above by $d$.
\item[c)] $\phi$ preserves the combinatorics. 
\end{enumerate}
Then we have  
\[n({\cal P\cal A}(\tau,c),R)\geq \frac{1}{dh(\tilde {\cal Q})R}e^{h(\tilde {\cal Q})R}\]
for all $R\geq R_0$.
\end{proposition}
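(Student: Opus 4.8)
The plan is to obtain the elements of ${\cal P\cal A}(\tau,d)$ as the periodic orbits manufactured by Proposition~\ref{prop45}, applied at a recurrent point lying deep inside ${\cal Q}(\tau)$, and then to count them by a Markov-type estimate. We work throughout in the finite manifold cover $\hat {\cal Q}$ of ${\cal Q}$ described before Proposition~\ref{prop45}, in which every periodic orbit is defined by a pseudo-Anosov class preserving the combinatorics and $\lambda$ is the normalized lift of the Masur--Veech measure; each $\hat {\cal Q}$-orbit of the kind we build will determine an element of ${\cal P\cal A}(\tau,d)$, and passing from $\hat {\cal Q}$-orbits to conjugacy classes in ${\rm Stab}(\tilde {\cal Q})$ only costs a fixed finite index factor $k$, which we absorb into $d$. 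Since ${\cal L\cal T}(\tilde {\cal Q})$ is a union of finitely many ${\rm Mod}(S)$-orbits and all relevant quantities are ${\rm Mod}(S)$-invariant, it suffices to produce $d$ and $R_0$ for one representative $\tau$ of each orbit and take the maximum.

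Fix such a $\tau$. Since ${\cal Q}(\tau)$ contains an open subset of $\tilde {\cal Q}$ (by the summary following Proposition~\ref{all}), the corresponding open subset of $\hat {\cal Q}$ is nonempty and meets the full-measure set of recurrent points; fix a recurrent point $q$ in its interior together with a neighbourhood $U\ni q$ still projecting into ${\cal Q}(\tau)$. Apply Proposition~\ref{prop45} to $q$ and $U$ with $\delta,\eta$ small, obtaining $Z_0\subset Z_1\subset Z_2\subset Z_3\subset Z_4\subset U$ and $R_0$. For $z\in Z_0$ and $T>R_0$ with $\Phi^T z\in Z_3$, part~(4) gives a path connected set $B(z)\subset Z_2$ with $z\in B(z)$ and $\lambda(B(z))$ between $(1-\delta)e^{-hT}\lambda(Z_1)$ and $(1-\delta)^{-1}e^{-hT}\lambda(Z_1)$, where $h=h(\tilde {\cal Q})$, together with a periodic orbit $\gamma$ of length in $[T-mt_0,T+mt_0]$ all of whose returns to $Z_4$ lie in $Z_4\subset {\cal Q}(\tau)$. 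Let $\phi$ be a pseudo-Anosov class defining $\gamma$. A differential on an axis of $\phi$ lies in $Z_4\subset {\cal Q}(\tau)$, and $\phi$ preserves the combinatorics, so Lemma~\ref{traintrackexpansion} yields $\phi(\tau)\prec\tau$; this is condition~(a), condition~(c) holds by construction, and since $\phi$ fixes a point of $\tilde {\cal Q}$ we get $\phi\in{\rm Stab}(\tilde {\cal Q})$.

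Condition~(b) is the delicate point. For positivity of $A(\phi,\tau)$: the carrying $\phi(\tau)\prec\tau$ is realized by a splitting and shifting sequence from $\tau$ to $\phi(\tau)$, and since an axis of $\phi$ meets the fixed compact set $U$ in the interior of ${\cal Q}(\tau)$ while the translation length of $\phi$ is at least $R_0-mt_0$, this sequence is long, and it proceeds along a minimal uniquely ergodic lamination close to the horizontal lamination of $q$; a sufficiently long such sequence has positive total transition matrix, by the mechanism underlying Lemma~\ref{attracting} together with \cite{PH92}, so after enlarging $R_0$ we may assume $A(\phi,\tau)>0$. This is the one place where largeness of the translation length is genuinely used. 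For the bound on the ratios of the entries of $A(\phi,\tau)$: by part~(4) we have $B(z)\subset Z_2$ and $\Phi^T B(z)\subset Z_4$ for the fixed compact sets $Z_2\subset Z_4$, with the two-sided estimate on $\lambda(B(z))$; after identifying ${\cal V}(\tau)$ with a cone in $\mathbb{R}^m$ (Lemma~\ref{polyhedron}), under which $A(\phi,\tau)$ describes the first return to ${\cal Q}(\tau)$, this says that $A(\phi,\tau)$ maps the fixed cross-section corresponding to $Z_2$ into the fixed region corresponding to $Z_4$, which forces the ratios of the entries of $A(\phi,\tau)$ to be at most a constant $d_0$ depending only on $Z_2,Z_4$, hence only on $\tilde {\cal Q}$. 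I expect this step --- converting the metric control of the boxes in Proposition~\ref{prop45} into uniformly bounded distortion of the combinatorial maps $A(\phi,\tau)$ --- to be the main obstacle.

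Finally, the count. By mixing of the Masur--Veech measure, $\lambda(Z_0\cap\Phi^{-T}Z_3)\ge\tfrac12\lambda(Z_0)\lambda(Z_3)>0$ for all large $T$. The sets $B(z)\cap(Z_0\cap\Phi^{-T}Z_3)$ partition $Z_0\cap\Phi^{-T}Z_3$ (if $z'\in B(z)$ then $B(z')=B(z)$), each piece has measure at most $(1-\delta)^{-1}e^{-hT}\lambda(Z_1)$, so the number of distinct boxes arising at time $T$ is at least $c_1e^{hT}$ with $c_1=c_1(\tilde {\cal Q},\delta)>0$; by part~(4) distinct boxes yield disjoint subarcs $\gamma(z,T)$ of their orbits in $Z_4$. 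A single periodic orbit of length at most $T+mt_0$ meets the fixed set $Z_4$ in at most $c_2T$ arcs (each such arc has length bounded below by a fixed multiple of $t_0$), hence is produced by at most $c_2T$ of the boxes, so at least $c_1e^{hT}/(c_2T)$ distinct periodic orbits have length in $[T-mt_0,T+mt_0]$, and for $d\ge d_0$ all of them lie in ${\cal P\cal A}(\tau,d)$. Taking $T=R-mt_0$, and enlarging $R_0$ so that $R_0>mt_0$, produces at least $c_3e^{hR}/R$ periodic orbits of length at most $R$ with all required properties, where $c_3=c_3(\tilde {\cal Q})>0$. Choosing $d=\max\bigl(d_0,\,k/(c_3\,h(\tilde {\cal Q}))\bigr)$, where $k$ is the finite index factor incurred in passing from $\hat {\cal Q}$-orbits to ${\rm Stab}(\tilde {\cal Q})$-conjugacy classes, yields $n({\cal P\cal A}(\tau,d),R)\ge \tfrac{1}{d\,h(\tilde {\cal Q})R}\,e^{h(\tilde {\cal Q})R}$ for all $R\ge R_0$, as desired.
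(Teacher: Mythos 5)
Your proposal has the right skeleton (use Proposition~\ref{prop45} to manufacture periodic orbits, count the boxes $B(z)$, identify the mapping classes via Lemma~\ref{traintrackexpansion}), but there is a genuine gap precisely at the step you flagged as "the main obstacle", and it is not just a matter of more work --- the route you propose does not obviously close.

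You want to show that for the pseudo-Anosov $\psi$ coming from the pseudo-orbit, $A(\psi,\tau)$ is positive and has entry ratio bounded by a constant $d_0$ depending only on $\tilde{\cal Q}$. Your argument for positivity ("a sufficiently long splitting sequence has positive total transition matrix") is not correct as stated: a long splitting sequence along a minimal lamination need not have positive transition matrix without further structure. Your argument for the entry-ratio bound ("$A(\psi,\tau)$ maps the fixed cross-section corresponding to $Z_2$ into the fixed region corresponding to $Z_4$, which forces bounded ratio of entries") is also not justified: the constraint that $B(z)\subset Z_2$ and $\Phi^T B(z)\subset Z_4$ for fixed compact $Z_2,Z_4$ controls the projective location of the horizontal laminations (and in particular the Perron--Frobenius direction), but bounded ratio of coordinates of the eigenvector does \emph{not} imply bounded ratio of entries of the matrix. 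There are positive matrices with nearly uniform leading eigenvector and arbitrarily unbalanced entries. Bounded entry ratio is equivalent to the map being a uniform contraction of the entire cone $\mathcal{V}(\tau)$ in the Hilbert projective metric; your geometric control only constrains a thin slice of that cone and gives no uniformity as the return time $T\to\infty$.

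The paper's proof sidesteps both difficulties by an algebraic sandwich. It first fixes, once and for all, a pseudo-Anosov $\phi$ preserving the combinatorics with $\phi(\tau)\prec\tau$ and with $A(\phi,\tau)$ \emph{positive}; let $\ell$ be the maximal ratio of entries of $A(\phi,\tau)$. It then applies Proposition~\ref{prop45} not at a generic recurrent point but at a point $q$ on the periodic orbit of $\phi$, inside a cross-section built from $\phi(\tau)$ and $\phi^{-1}(\tau^*)$, and takes the counted mapping classes to be $\phi\circ\psi\circ\phi$ rather than $\psi$. The decisive observation is then purely linear-algebraic: for any nonnegative $A(\psi,\tau)$ with no zero row, the product $A(\phi,\tau)A(\psi,\tau)A(\phi,\tau)$ is positive and has entry ratio at most $\ell^2$, because post-multiplication by $A(\phi,\tau)$ makes the entries in each fixed row differ by a factor $\leq\ell$, and pre-multiplication by $A(\phi,\tau)$ then does the same for columns. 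This gives condition~(b) with $d=\ell^2$ uniformly in $\psi$, and positivity comes for free; the translation-length overhead $2T(\phi)+\chi$ is absorbed into the constant $d$. You should adopt this sandwich rather than try to extract entry-ratio control directly from the geometry of the boxes.

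A secondary inaccuracy: in your last paragraph you invoke "mixing" to get $\lambda(Z_0\cap\Phi^{-T}Z_3)\geq \tfrac12\lambda(Z_0)\lambda(Z_3)$ for all large $T$. That estimate is fine, but note that you also need $\Phi^T z\in Z_3$ for the hypotheses of Proposition~\ref{prop45}(4) applied at time $T$, not merely at some nearby time, so the "$T=R-mt_0$" adjustment at the end needs a short additional sentence; the paper handles this with the same mixing estimate and the same $R/(2t_0)$ multiplicity bound you use, so this part of your argument is essentially the paper's.
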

\begin{proof}
Let $\tau\in {\cal L\cal T}(\tilde {\cal Q})$. We claim that 
we can find
a pseudo-Anosov mapping class $\phi\in {\rm Mod}(S)$
with the following properties.
\begin{enumerate}
\item $\phi (\tau)\prec \tau$.
\item The image of any branch of 
$\phi (\tau)$ under a carrying map $\phi(\tau)\to \tau$ is 
surjective. 
\item $\phi$ preserves the combinatorics.
\end{enumerate}

To see that such a mapping class exists, note that
we can always find a splitting sequence starting at 
$\tau$ whose endpoint $\eta$ has the property that 
the restriction of the carrying map to each branch of 
$\eta$ is onto $\tau$ (see \cite{H09} for more information). 
By Lemma \ref{carryingcontrol}, there is 
some $\phi\in {\rm Mod}(S)$ such that
$\phi(\tau)\prec \eta$, and this mapping class $\phi$ has the required properties.
Note that $\phi$ is pseudo-Anosov by Lemma \ref{perron} since 
$A(\phi,\tau)$ is a positive matrix. 
By perhaps passing to a finite power, we may assume that 
$\phi$ preserves the combinatorics.

Denote by ${\cal Q}_0(\tau)$ the set of quadratic differentials 
$q\in {\cal Q}(\tau)$ whose horizontal measured geodesic 
lamination deposits the weight one on $\tau$; then 
${\cal Q}_0(\tau)$ is a local cross section for the
Teichm\"uller flow on $\tilde {\cal Q}$.
Let $\hat {\cal U}\subset {\cal Q}_0(\tau)$ be the set of 
all differentials $v\in {\cal Q}_0(\tau)$ whose horizontal measured geodesic lamination
is carried by $\phi(\tau)$ and  whose vertical measured geodesic 
lamination is carried by the dual $\phi^{-1}(\tau^*)$ 
of $\phi^{-1}(\tau)$. 
By possibly replacing $\phi$ by a positive power, we may assume
that $\hat {\cal U}\subset \tilde {\cal Q}$, that is, no point in 
$\hat {\cal U}$ is contained in a stratum of the boundary of $\tilde {\cal Q}$. 

To be more precise, 
let $\mu,\nu$ be the attracting and repelling fixed point for the 
action of $\phi$ on ${\cal P\cal M\cal L}$, respectively. If 
$v\in \hat {\cal U}$ then the horizontal measured geodesic lamination
of $v$ is contained in $\phi{\cal V}(\tau)$, and the vertical measured geodesic
lamination is contained in $\phi^{-1}{\cal V}^*(\tau)$. 
As $\cap_{k\geq 0}\phi^k{\cal V}(\tau)$ equals the line whose projective class equals
$\mu$, and $\cap_{k\geq 0}\phi^{-k}{\cal V}^*(\tau)$ equals the line whose 
projective class equals $\nu$, and as $\tilde {\cal Q}$ is an open subset of 
${\cal Q}(\tau)$, we obtain that by perhaps replacing $\phi$ by $\phi^k$ for 
a suitably chosen $k>0$, we may assume that $\hat {\cal U}\subset \tilde {\cal Q}$.

For every $\epsilon >0$, 
$\tilde {\cal U}(\epsilon)=\cup_{t\in (-\epsilon,\epsilon)}\Phi^t \hat {\cal U}\cap \tilde {\cal Q}$ 
is an open subset of 
$\tilde {\cal Q}$. By perhaps decreasing $\tilde {\cal U}(\epsilon)$ we may assume that
$\tilde {\cal U}(\epsilon)$ 
projects homeomorphically to 
an open subset ${\cal U}(\epsilon)$ of the finite cover 
$\hat {\cal Q}$ of ${\cal Q}$. 
The unit cotangent line of the axis of the mapping class $\phi$ descends to 
a periodic orbit of $\Phi^t$ in $\hat {\cal Q}$. 

Let $q\in {\cal U}(\epsilon)$ be a point on the periodic orbit defined by $\phi$.
Choose an open neighborhood $V\subset {\cal U}(\epsilon)$ of $q$ which is
sufficiently small that the following holds true. 
Let $T>0$ be the period of the periodic point $q$.
%
%
If $w\in V$ then the $\Phi^t$-orbit of $w$ intersects 
${\cal U}(\epsilon)$ in $\Phi^Tw, \Phi^{2T}w$, $\Phi^{-T}w$ and 
$\Phi^{-2T}w$. Furthermore, there is an open tubular neighborhood 
$N\subset \hat {\cal Q}$ of the periodic orbit through $q$ which is homeomorphic to a ball 
bundle over a circle and such that for every $w\in V$, we have
$\cup_{0\leq t\leq T}\Phi^Tw\in N,\cup_{-T\leq t\leq 0}\Phi^tw\in N$.
Let $U=V\cap \Phi^TV\cap \Phi^{-T}V\subset {\cal U}(\epsilon)$.
Then $U$ is an open neighborhood of $q$ in ${\cal U}(\epsilon)$.



Let $Z_0\subset Z_1\subset Z_2\subset U$ 
and $Z_3\subset Z_4\subset U$ and $R_0>0$ 
be as in Proposition \ref{prop45}. 
If $w\in Z_0$ and if 
$\Phi^R w\in Z_3$ for some $R>\max\{4T,R_0\}$ then 
Proposition \ref{prop45} shows that the pseudo-orbit 
$(w,\Phi^Rw)$ determines a periodic orbit for 
$\Phi^t$, and this orbit defines up to conjugation 
a pseudo-Anosov mapping class $\psi$. 
We may choose a point 
$w(\psi)\in {\cal Q}_0(\tau)$ on the cotangent line of the axis of 
$\psi$. By the construction of $\hat {\cal Q}$, the mapping class
$\psi$ preserves the combinatorics. Thus 
by Lemma \ref{traintrackexpansion}, 
we have $\psi(\tau)\prec \tau$.

Consider the pseudo-orbit $(\Phi^{-T}w,\Phi^{T+R}w)$ with endpoints in 
$V$. Let $\beta\subset \tilde {\cal Q}$ be a lift of this pseudo-orbit 
to $\tilde {\cal Q}$ and let as before $P:\tilde {\cal Q}(S)\to {\cal T}(S)$ be the 
canonical projection. Up to replacing all mapping classes by conjugates
and renaming, by the choice of $V$, the mapping class
$\phi \circ \psi \circ \phi$ maps $P\beta(-T)$ into a small neighborhood of 
$P\beta(R+T)$. Up to adjusting the set $N$ which entered the definition of 
the open set $U$ and taking into account that a torsion free finite index subgroup
of ${\rm Mod}(S)$ 
acts properly discontinuously and freely on ${\cal T}(S)$, 
the mapping class $\phi\circ \psi \circ  \phi$ is unique with this property. 
Moreover, the translation length of $\phi\circ \psi \circ \phi$ does not exceed 
$R+2T+\chi$ where $\chi>0$ is a constant only depending on 
the choice of $U$. Namely, the translation length of $\phi \circ \psi \circ \phi$
is the infimum of the Teichm\"uller distance between a point in 
${\cal T}(S)$ and its image under $\phi\circ \psi\circ \phi$, and this 
distance does not exceed $2T+R+\chi$ for a universal constant 
$\chi$ by the above discussion.

The map $\psi$ has $\tau$ as a train track expansion, and the same holds true for 
$\phi$. 
Concatenation shows that $\phi\circ \psi \circ \phi$ has 
$\tau$ as a train track expansion as well. 
We claim that
there is a number $d>0$ not depending on $\psi$
such that the ratios of the entries of the matrix 
\[A=A(\phi,\tau)A(\psi,\tau)A(\phi,\tau)\]
are bounded from above by $d$.

Namely, let $\ell>0$ be the maximum of the ratios of the entries
of the matrix $A(\phi,\tau)$. 
Then up to a factor of at most $\ell$, the entries in 
each line of the matrix $A(\psi,\tau)A(\phi,\tau)$ 
coincide with a fixed multiple of 
the sum of the entries
of the matrix $A(\psi,\tau)$ in the same line. 
In particular,
since $A(\psi,\tau)$ can not have a line all of whose entries
vanish, the matrix $A(\psi,\tau)A(\phi,\tau)$ is positive, and
the ratios of its entries in a fixed line are bounded from
above by $\ell$.

Similarly, up to a factor of at most $\ell$, the 
entries in each row of the matrix 
$A$ coincide  with 
a fixed multiple of the sum of the entries
of $A(\psi,\tau)A(\phi,\tau)$ in the same row.  
By the discussion in the previous paragraph, this implies
that the ratios of the entries of the positive matrix $A$ are
bounded from above by $\ell^2$. Lemma \ref{perron} 
shows that $\phi\circ \psi \circ \phi$ is pseudo-Anosov. 
Furthermore, we have $\phi\circ \psi\circ \phi({\cal V}(\tau))\subset 
\phi({\cal V}(\tau))$ and $\phi^{-1}\psi^{-1}\phi^{-1}({\cal V}^*(\tau))\subset 
\phi^{-1}({\cal V}^*(\tau))$. Using once more the Brouwer fixed point theorem, this 
implies that the attracting fixed point for the action of 
$\phi \circ \psi\circ \phi$ on ${\cal P\cal M\cal L}$ 
is contained in the projectivization of $\phi({\cal V}(\tau))$, and the repelling 
fixed point is contained in the projectivization of 
$\phi^{-1}({\cal V}^*(\tau))$. As a consequence, the 
cotangent line of an axis of $\phi\circ \psi \circ \phi$ is contained in 
$\tilde {\cal Q}$.
Thus any mapping class of the form $\phi\circ \psi \circ \phi$ 
constructed from a pseudo-orbit $(u,\Phi^Ru)$ with $R>R_0$, 
$u\in Z_0$ and $\Phi^Ru\in Z_3$ 
has properties (a) -(c) stated in the proposition.

We are left with counting the number of such periodic orbits. 
Put $h=h(\tilde {\cal Q})$. Note that by Proposition \ref{prop45},
a point $w\in Z_0$ with $\Phi^Rw\in Z_3$ determines a subsegment of 
a periodic orbit passing through $Z_4$ of uniform length $2t_0$, and
points which determine the same segment are contained in  
a subset $B\subset Z_2$ with $\Phi^R(B)\subset Z_4$ 
whose measure is bounded from 
above by $(1-\delta)^{-1}e^{-hR}\lambda(Z_1)$. 
Thus each such set accounts for at most the volume
$(1-\delta)^{-1}e^{-hR}\lambda(Z_1)$ from 
$\lambda(\Phi^RZ_0\cap Z_3)$ and hence 
there are at least $(1-\delta)e^{hR}\lambda(\Phi^RZ_0\cap Z_3)/
\lambda(Z_1)$ such orbits. 
As a consequence, to complete the 
proof of the proposition it suffices to show that for sufficiently large $R>>T$, 
the Masur Veech measure of $\Phi^RZ_0\cap Z_3$ is bounded from below by
a universal constant. 
Namely, as the length of an intersection component of an orbit with 
the set $Z_3$ equals 
a fixed number $2t_0>0$, a periodic orbit of length $R$ can not intersect 
$Z_3$ in more than
$R/2t_0$ components.

Now the normalized Masur Veech measure $\lambda$ is mixing for the 
Teichm\"uller flow $\Phi^t$ on $\hat {\cal Q}$ 
and hence for a given number $\epsilon >0$ 
we have 
\[\lambda(\Phi^R Z_0\cap Z_3)\geq (1-\epsilon)\lambda(Z_0)\lambda(Z_3)\] 
for all sufficiently large $R$, say for all $R\geq R_0$. Then the statement of the 
proposition holds true for all $R\geq R_0$ and for a number $d>0$ depending on 
the component $\hat {\cal Q}$, 
the choice of the pseudo-Anosov mapping class $\phi$
and the choice of the sets 
$Z_0,Z_3$.
\end{proof}

\section{Periodic orbits in the thin part of strata}
\label{periodicthin}

In this section we only consider strata of differentials 
without marked regular points. This means that 
strata of abelian differentials are
defined on surfaces without punctures, and 
a puncture of a surface $S$ corresponds to a simple pole of 
a quadratic differential. 
Our main goal is to prove Theorem \ref{main} from 
the introduction. Strata of differentials with marked regular points
will be used in the proof. 

Let $S$ be a surface of genus $g\geq 0$ with $n\geq 0$
punctures and $3g-3+n\geq 5$. 
Recall that a marked quadratic (or abelian) 
differential $q\in \tilde{\cal Q}(S)$ 
defines a marked singular euclidean metric of area one 
on the surface $S$
with singularities at the zeros and at the poles of 
the differential. There is a unique 
finite volume complete hyperbolic
metric on $S$ for the underlying conformal structure.

Define the \emph{systole} of this hyperbolic metric to be
the smallest length of a simple closed geodesic.
We write ${\cal T}(S)_\epsilon\subset {\cal T}(S)$ 
for the space of all 
marked complete hyperbolic metrics on $S$ 
of finite volume whose systole is at least
$\epsilon$. The mapping class group ${\rm Mod}(S)$ 
acts properly and
cocompactly on ${\cal T}(S)_\epsilon$.

Given a marked abelian or quadratic differential 
$q\in \tilde {\cal Q}(S)$, 
the \emph{$q$-length} of an \emph{essential} 
simple closed curve $c$, that is, 
a simple closed curve which is not contractible
and not freely homotopic into a puncture,
is defined to be the infimum of the lengths 
with respect to the singular euclidean metric 
of any curve which is freely homotopic to $c$.

The following observation is a fairly easy consequence
of invariance under the action of the mapping class
group and cocompactness. 
A much stronger and more precise version is due 
to Rafi \cite{R14}.
For its formulation, let 
\[P:\tilde{\cal Q}(S)\to {\cal T}(S)\] 
be the canonical projection which
associates to a marked area one
quadratic (or abelian) differential its underlying 
marked hyperbolic metric.

\begin{lemma}\label{systole}
For every $\chi >0$ there is a number 
$\delta=\delta(S,\chi) >0$
with the following property. Let $q\in \tilde{\cal Q}(S)$
and assume that there is an essential simple closed
curve on $S$ of $q$-length at most $\delta$;
then $Pq\not\in {\cal T}(S)_\chi$.
\end{lemma}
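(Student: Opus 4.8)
For $q\in\tilde{\cal Q}(S)$ write $m(q)=\inf\{\ell_q(c):c\text{ an essential simple closed curve on }S\}$ for the flat systole, where $\ell_q$ denotes $q$-length. The assertion of the lemma is the contrapositive of: $Pq\in{\cal T}(S)_\chi$ implies $m(q)>\delta$. So it suffices to produce a number $\delta_0=\delta_0(S,\chi)>0$ with $m(q)\ge\delta_0$ for every $q\in P^{-1}({\cal T}(S)_\chi)$; then any $\delta<\delta_0$ works. The plan is the one indicated in the statement: exploit that $m$ is invariant under ${\rm Mod}(S)$, that ${\rm Mod}(S)$ acts cocompactly on ${\cal T}(S)_\chi$ and hence (the fibres of $P$ being compact spheres) on $P^{-1}({\cal T}(S)_\chi)$, and that $m$ is positive and lower semicontinuous.

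\textbf{The two ingredients.} \emph{(i) $m(q)>0$ for each $q$.} The singular euclidean metric $|q|$ makes $S$ a \emph{compact} length space: the zeros are cone points of angle $\ge 3\pi$ and the simple poles are cone points of angle $\pi$, so there are no cusps. If $m(q)$ were $0$ there would be essential simple closed geodesics $\gamma_k$ for $|q|$ with $\ell_q(\gamma_k)\to 0$; since each has diameter at most $\ell_q(\gamma_k)/2$, for large $k$ the curve $\gamma_k$ lies in a small metric ball of $|q|$, which is either contractible (if it meets at most one cone point, a metric ball about a cone point being a cone) or, more generally, a topological disk containing no puncture — in either case $\gamma_k$ bounds a disk in $S$, contradicting essentiality. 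Hence $m(q)>0$. \emph{(ii) $m$ is lower semicontinuous on $\tilde{\cal Q}(S)$.} If $q_j\to q$ then the flat metrics $|q_j|$ converge to $|q|$ uniformly, so the distance functions converge uniformly and the surfaces stay uniformly compact. Given essential simple closed curves $c_j$ with $\ell_{q_j}(c_j)\to L$, their $|q_j|$-geodesic representatives have bounded diameter, so after passing to a subsequence they converge (Arzelà--Ascoli) to a closed $|q|$-geodesic $\gamma_\infty$ of length $L$. One checks $\gamma_\infty$ is neither null-homotopic nor peripheral: a null-homotopic closed geodesic would bound a disk of total cone deficit $2\pi$, impossible for a disk in $S$ which contains only zeros (each of non-positive deficit) and no poles, and a peripheral loop around a single pole cannot be a geodesic by Gauss--Bonnet. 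Thus $\gamma_\infty$ is an essential closed geodesic of $|q|$ of length $L>0$, and since the shortest essential closed geodesic is always simple, $m(q)\le L$. Therefore $m(q)\le\liminf_j m(q_j)$.

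\textbf{Conclusion and the main obstacle.} If $m$ were not bounded below on $P^{-1}({\cal T}(S)_\chi)$, pick $q_j$ there with $m(q_j)\to 0$; by cocompactness choose $\phi_j\in{\rm Mod}(S)$ with $\phi_jq_j\to q_\infty\in P^{-1}({\cal T}(S)_\chi)$ along a subsequence, and note $m(\phi_jq_j)=m(q_j)\to 0$ by invariance. Lower semicontinuity then gives $m(q_\infty)\le\liminf_j m(\phi_jq_j)=0$, contradicting $m(q_\infty)>0$. So $\delta_0:=\inf_{P^{-1}({\cal T}(S)_\chi)}m>0$, and any $\delta<\delta_0$ has the required property. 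The only genuinely delicate point is ingredient (ii), namely showing the short essential curves $c_j$ do not disappear in the limit: one must rule out that $c_j$ shrinks to a point (which can only happen when it encircles colliding zeros, or a zero converging to a pole, in which case $c_j$ is already inessential or peripheral in $S$) and that its geodesic limit is inessential — here the fact that the poles create positive (angle $\pi$) cone points, together with the topology of $S$ in which zeros are not punctures, is exactly what is needed.
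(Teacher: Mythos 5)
Your argument takes a genuinely different route from the paper's. The paper's proof is a short quantitative argument: it quotes Lemma~3.3 of Minsky \cite{M94}, which gives a bilipschitz constant $L=L(\chi)$ between the flat metric $|q|$ and the hyperbolic metric $Pq$ on the complement $K$ of standard cusp neighborhoods whenever $Pq\in{\cal T}(S)_\chi$; an essential curve either lies in $K$ (so its hyperbolic length is $\geq\chi$) or crosses $K$ in arcs of hyperbolic length $\geq c$, so its $q$-length is $\geq c/L$, and one sets $\delta<c/L$. Your approach replaces this by a soft compactness argument (flat systole is positive, lower semicontinuous, and ${\rm Mod}(S)$-invariant on a cocompact set), which is a perfectly reasonable strategy and, if completed, would prove the same statement.

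However, ingredient (ii) as written has a genuine gap precisely in the case you yourself flag as delicate, namely $L=0$, which is the only case that actually matters in the concluding contradiction. When $m(q_j)\to 0$ (so $L=0$), your ``limit geodesic $\gamma_\infty$ of length $L$'' is a point, and the subsequent sentences (``$\gamma_\infty$ is neither null-homotopic nor peripheral'', ``$\gamma_\infty$ is an essential closed geodesic of length $L>0$'') are vacuous or assert the conclusion rather than proving it. What is really needed is a direct argument that a sequence of essential simple $|q_j|$-geodesics $\gamma_j$ with $\ell_{q_j}(\gamma_j)\to 0$ cannot exist when $q_j\to q_\infty$ in $\tilde{\cal Q}(S)$: one must show the $\gamma_j$ eventually lie in a small metric ball of $|q_\infty|$ about a single point of $\bar S$ (a regular point, a zero, a simple pole, or a marked regular point), and that such a ball is a disk whose simple loops are null-homotopic or peripheral in $S$. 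This in turn rests on the other unjustified step, the claim that ``the flat metrics $|q_j|$ converge to $|q|$ uniformly, so the distance functions converge uniformly.'' This is not automatic: the cone points move as $j\to\infty$, the densities $|q_j|^{1/2}$ do \emph{not} converge uniformly near moving singularities, and deducing uniform convergence of distance functions (or even that $|q_\infty|$-lengths of the $\gamma_j$ tend to $0$) requires a comparison estimate of exactly the kind Minsky's lemma provides. Neither gap is fatal — both can be filled — but as written the proof does not close, and the fix amounts to reproducing a version of the bilipschitz comparison the paper simply cites.

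A minor further point: you invoke that the shortest essential closed flat geodesic is always simple; this is true but is itself a small lemma (a self-intersecting geodesic can be surgered into strictly shorter essential loops using the cone-angle $\geq\pi$ hypothesis), and it is needed because a Hausdorff limit of simple geodesics need not be simple.
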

\begin{proof}
By the collar lemma for hyperbolic surfaces, every
cusp of a hyperbolic surface has a standard embedded neighborhood.
Such a neighborhood is homeomorphic to a punctured disk.
Up to a geometric adjustment of the neighborhoods, 
the hyperbolic distance between any two such 
cusp neighborhoods is bounded from below by 
a universal positive constant.


For a complete finite volume
hyperbolic metric $h$ on $S$ 
let $K(h)\subset (S,h)$ be the complement of the standard neighborhoods 
of the cusps of $S$.
By Lemma 3.3 of \cite{M94}, for every $\chi>0$ there
is a number $L=L(\chi)>1$ such that for every 
$q\in \tilde{\cal Q}(S)$ with $Pq\in {\cal T}(S)_\chi$,
the singular euclidean metric defined by $q$ is
$L$-bilipschitz equivalent to the hyperbolic metric $Pq$ 
on the compact set $K=K(Pq)$. 
By the choice of the standard neighborhoods
of the cusps, 
every essential simple closed curve 
on $S$ either is contained in $K$, or it 
intersects the set $K$ in a union of arcs whose hyperbolic
length is bounded from below by some fixed number
$c\in (0,\chi)$ only depending on the topological type of $S$ and on $\chi$.
Then the $q$-length of any essential simple closed
curve on $S$ is not smaller than $c/L$.
This shows the lemma.
\end{proof}

\begin{remark}\label{notshort}
Lemma \ref{systole} does not state that an essential simple closed 
curve of short $q$-length has short hyperbolic length for the 
hyperbolic metric underlying the  
complex structure of $q$. In fact, this is not true in general \cite{R14}. 
\end{remark}

For $\chi>0$ define
\[\tilde{\cal Q}(S)_\chi=\{q\in \tilde{\cal Q}(S)\mid 
Pq\in {\cal T}(S)_{\chi}\}.\] The sets
$\tilde{\cal Q}(S)_\chi$ are invariant
under the action of ${\rm Mod}(S)$ on
$\tilde{\cal Q}(S)$. Their projections
\[{\cal Q}(S)_\chi=\tilde{\cal Q}(S)_\chi/
{\rm Mod}(S)\subset {\cal Q}(S)\] 
to ${\cal Q}(S)$ are compact and satisfy
${\cal Q}(S)_\chi\subset {\cal Q}(S)_\delta$ for
$\chi>\delta$ and $\cup_{\chi >0}{\cal Q}(S)_\chi=
{\cal Q}(S)$.

For a component 
$Q$ of a stratum 
of quadratic or abelian differentials and for 
$\chi >0, R>0$ let 
\[n({\cal Q},R)^{<\chi}\geq 0\]
be the number of periodic orbits for the 
Teichm\"uller flow $\Phi^t$ of length at most $R$ which 
are contained in ${\cal Q}-{\cal Q}(S)_\chi$.
Our goal is to show that 
for every $\chi >0$ and for sufficiently large $R$ depending on 
$\chi$, 
the number of such orbits is at least 
$de^{(h({\cal Q})-1)R}/R$ for a number $d=d({\cal Q},\chi)>0$.

The strategy is to use a train track $\tau$ in 
special form for ${\cal Q}$ as a combinatorial tool to lift periodic 
orbits on components of the principal boundary of ${\cal Q}$
into the thin part of ${\cal Q}$.   
We recall the important
properties of such a train track $\tau$.

\begin{enumerate}
\item $\tau$ contains two clean
vertex cycles $c_1,c_2$. 
Either the surface $S-c_1-c_2$ is connected, or 
it consists of a connected component $N$ 
which is different from a sphere with at most three holes
or a torus with at most one hole and one or
two additional components which are 
twice punctured disks. 
\item Let $S_0$ (or $S_1,S_2$) 
be the surface which is obtained from the 
component of $S-c_1-c_2$ (or of $S-c_1,S-c_2$) 
different from a sphere with at most three holes
by replacing the boundary 
circles by punctures.
Then the graph $\sigma_0$ on $S_0$ 
(or $\sigma_1,\sigma_2$ on $S_1,S_2$) obtained by
removing from $\tau$ all branches which are
incident on a switch
in $c_1\cup c_2$ (or incident on a switch in $c_1,c_2$) 
is a connected large train track, either on $S_i$ or on the
surface obtained from $S_i$ by removing one or two of 
the newborn punctures. 
\end{enumerate}

We showed in Section 4 that for $i=1,2$ 
there is a component 
${\cal Q}_i$ of a stratum of differentials for the surface $S_i$
(possibly with marked regular points or with some of the newborn 
punctures removed) such that
$\sigma_i\in {\cal L\cal T}(\tilde{\cal Q}_i)$
where $\tilde {\cal Q}_i$ is the preimage of ${\cal Q}_i$. 
The component ${\cal Q}_i$ is determined as follows.

{\sl Case 1:} 
$n\geq 2$ and the simple closed curve $c_i$ is 
separating.

Then $c_i$ bounds a twice punctured disk.
There is a simple closed curve $\alpha$ embedded in the train 
track $\sigma_i$ 
(however with cusps) which is
freely homotopic to $c_i$. 
Viewing $\sigma_i$ as a train track on the surface $S_i$, 
the curve $\alpha$ encloses
the added marked point on $S_i$ (which replaces the circle 
$c_i$). We require that a differential in ${\cal Q}_i$ has 
\begin{enumerate}
\item[$\bullet$] a simple pole at this marked point in the
case that $\alpha$ has a single cusp,
\item[$\bullet$] a regular point if 
$\alpha$ is a bigon or 
\item[$\bullet$] a zero if $\alpha$ has at least
three cusps  (in which
case we remove the marked point). 
\end{enumerate}

Let $h({\cal Q}_i)$ be the dimension of the complex algebraic orbifold
containing ${\cal Q}_i$ as a real hypersurface. We have
$h({\cal Q}_i)=h({\cal Q})-2$. A degeneration of 
differentials in ${\cal Q}$ to a differential in 
${\cal Q}_i$ corresponds to a shrinking half-pillowcase.

{\sl Case 2:} 
The simple closed curve $c_i$ is non-separating.

Then $S-c_i$ has 
two boundary components. Each of these
boundary components is contained in a complementary component of $\sigma_i$
(these components may coincide). If there are two distinct
such components then each of these components 
is an annulus. One boundary component
of such an annulus is the curve $c_i$,
and the second boundary component $\alpha$ is contained in $\sigma_i$.
As before, put a simple pole on the puncture of $S_i$ which replaces the curve $c_i$ if
$\alpha$ has a single cusp, view the puncture of $S_i$ as a regular marked point if 
$\alpha$ is a bigon, and remove the puncture and view is as 
a zero if $\alpha$ contains at least three
cusps. Once again, $h({\cal Q}_i)=h({\cal Q})-2$. A degeneration of differentials in 
${\cal Q}$ to a differential in ${\cal Q}_i$ corresponds to 
a shrinking cylinder.  
If both boundary components of $S-c_i$ are contained in the same
complementary component of $\sigma_i$ then we proceed in exactly the same way.

Our goal is to use the growth estimate from Section 5 for periodic 
orbits in ${\cal Q}_i$ which are defined by
pseudo-Anosov mapping classes preserving the combinatorics, 
with train track expansion
$\sigma_i$. Such mapping classes can be lifted to 
reducible mapping classes on $S$. We then 
concatenate suitably chosen such lifts 
to a pseudo-Anosov element
for $S$ with train track expansion $\tau$. We show that this can
be accomplished in such a way that the axis of this pseudo-Anosov
mapping class is contained in ${\cal T}(S)-{\cal T}(S)_\chi$
where $\chi>0$ is an arbitrarily chosen constant. 

Recall from Proposition \ref{countone} the counting constants
$d_i=d({\cal Q}_i)>0$
$(i=0,1,2)$. 
Choose elements $\phi_i\in {\rm Mod}(S_i)$
with the properties stated in Proposition \ref{countone}
for the numbers $d_i$. Each of these mapping classes 
has $\sigma_i$ as a train track expansion and 
fixes each of the complementary components of 
$\sigma_i$. These mapping classes 
can be lifted to the mapping
class group of the bordered surface 
$S-c_i$ (or of the bordered surface $S-(c_1\cup c_2)$ if $i=0$)
which consists of isotopy classes of diffeomorphisms 
 fixing the curve $c_i$ (or $c_1\cup c_2$) pointwise.
Such an extension in turn can be viewed as a
reducible element of 
${\rm Mod}(S)$. 

For a bordered surface $X$ with a distinguished boundary 
component $c$ and the surface $X^\prime$ 
obtained from $X$ by replacing $c$ by a 
puncture, there exists an exact sequence
\[1\to \mathbb{Z}\to {\rm Mod}(X)\to {\rm Mod}(X^\prime)\to 1\]
where the infinite cyclic group $\mathbb{Z}$ is the group of Dehn twists
about the boundary curve $c$. Thus 
a choice of an extension of $\phi_i$ as described in the
previous paragraph is by no means
unique. However, as $\phi_i$ is required to 
preserve the combinatorics, 
we can use the train track $\tau$ to construct 
from $\phi_i$ a unique such extension.

\begin{lemma}\label{dragging}
There is a natural choice of an
extension of $\phi_i$ to an element
of ${\rm Mod}(S)$, again denoted by $\phi_i$, which fulfills 
$\phi_i(\tau)\prec \tau$ and 
is such that $\phi_i(\tau)$ is obtained from 
$\tau$ by a splitting and shifting sequence of $\tau$ which 
does not involve a split at any branch in $\tau-\sigma_i$.
\end{lemma}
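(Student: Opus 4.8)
The plan is to take a splitting and shifting sequence realizing the action of $\phi_i$ on $\sigma_i$ and lift it, move by move, to a splitting and shifting sequence of $\tau$, then read off the extension from its endpoint. Since $\phi_i\in{\rm Mod}(S_i)$ has $\sigma_i$ as a train track expansion we have $\phi_i(\sigma_i)\prec\sigma_i$, so by \cite{PH92} there is a splitting and shifting sequence $\sigma_i=\sigma^{(0)},\dots,\sigma^{(N)}=\phi_i(\sigma_i)$ realizing the carrying relation, each step being a left or right split at a large branch or a shift at a mixed branch, where the left/right choice is the one keeping the attracting lamination $\mu_i$ of $\phi_i$ carried. Recall that $\sigma_i$ is obtained from $\tau$ exactly by deleting the branches forming the clean vertex cycles $c_j$, the (connecting) branches incident on their switches, and the branches in the three--holed sphere, resp.\ twice--punctured disk, components cut off by the $c_j$; call these finitely many branches \emph{peripheral}, so that $\tau-\sigma_i$ consists precisely of the peripheral branches.

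First I would show, by induction on $k$, that there is a train track $\tau^{(k)}$ on $S$ equal to $\sigma^{(k)}$ with a copy of the peripheral branches transported along the first $k$ moves, and that the $(k{+}1)$--st move of the $\sigma$--sequence lifts to a finite splitting and shifting sequence of $\tau^{(k)}$ performed only at sub--branches of the branch of $\sigma^{(k)}$ at which it occurs. The local mechanism is that, because each $c_j$ is a \emph{clean} vertex cycle, the peripheral branches meet $\sigma^{(k)}$ only along small half-branches; hence inside $\tau^{(k)}$ a large (resp.\ mixed) branch $e$ of $\sigma^{(k)}$ appears, after being subdivided by the feet of peripheral branches on it, as a concatenation of sub--branches retaining large (resp.\ mixed) ones in the pattern needed to perform the prescribed move, and carrying out that move at those sub--branches and nowhere else restricts over $S_i$ to the given move on $\sigma^{(k)}$ while merely dragging the peripheral branches along with their switches. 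Since $\phi_i$ preserves the combinatorics on $S_i$ it fixes the complementary component of $\sigma_i$ abutting each puncture replacing a $c_j$, which makes the transport of the peripheral branches canonical and ensures that the decorated endpoint records the way $\phi_i$ itself would attach $\phi_i(c_j)$ and its connecting branches. In particular every lifted move is a plain split or shift at a branch of $\sigma^{(k)}\subset\tau^{(k)}$, none at a branch of $\tau-\sigma_i$; so $\tau':=\tau^{(N)}$ has the same topological type as $\tau$ and, by \cite{PH92}, $\tau'\prec\tau$ through a splitting and shifting sequence avoiding $\tau-\sigma_i$.

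It remains to produce the extension. Over $S_i$ the constructed sequence restricts to $\sigma_i\to\phi_i(\sigma_i)$, so any homeomorphism of $S$ carrying $\tau$ onto $\tau'$ restricts on $S_i$ to a representative of $\phi_i$; and since the sequence does nothing at the branches of the vertex cycles $c_j$, it never wraps a connecting branch around a $c_j$, so such a homeomorphism can be chosen supported in a regular neighborhood of $\tau$ and its complementary disks and gluing across each $c_j$ with no Dehn twist. Its mapping class, again denoted $\phi_i\in{\rm Mod}(S)$, is then an extension of the given $\phi_i\in{\rm Mod}(S_i)$ with $\phi_i(\tau)=\tau'\prec\tau$; it is the unique such extension, because the ambiguity in extending an element of ${\rm Mod}(S_i)$ across a boundary curve $c_j$ is by powers of $T_{c_j}$, and $T_{c_j}$ wraps the connecting branches ending in $c_j$ and hence changes the isotopy class of $\tau$ (in which $c_j$ is embedded as a vertex cycle). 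This is the ``natural'' extension asserted by the lemma, and the required splitting and shifting sequence is the one constructed above. The hard part will be the branch--type bookkeeping: verifying that the peripheral branches really meet $\sigma^{(k)}$ only along small half-branches (a property of the special-form construction of $\tau$ together with cleanness of the $c_j$), that each lifted split is a genuine split of $\tau^{(k)}$ inducing the intended move on $S_i$, and that the left/right choices and the transport of the peripheral branches remain consistent across all $N$ steps, so that $\tau^{(N)}$ is $\phi_i(\tau)$ and not $T_{c_j}^m\phi_i(\tau)$ for some uncontrolled $m$; the rest is routine use of the train track machinery of \cite{PH92,H09}.
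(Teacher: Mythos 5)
Your proof follows essentially the same strategy as the paper: lift a splitting and shifting sequence from $\sigma_i$ to $\phi_i(\sigma_i)$ to a sequence on $\tau$ by dragging the peripheral branches along, using cleanness of $c_i$ and the fact that $\phi_i$ preserves the combinatorics to make the transport of $b_1,b_2$ (and hence the extension) canonical. One place where the paper is more precise and which your ``perform the move at those sub-branches'' gloss does not quite capture: when a foot of $b_1$ or $b_2$ lies in the interior of a large branch $e$ of $\sigma^{(k)}$, simply splitting at the large sub-branch of $e$ inside $\tau^{(k)}$ does \emph{not} restrict to the split of $\sigma^{(k)}$ at $e$; one must first perform shifts (and possibly intermediate splits) at the sub-branches of $e$ to move the peripheral foot past an endpoint of $e$, after which $e$ becomes a genuine large branch of the modified $\tau^{(k)}$ and the split can be taken there. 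This is exactly the bookkeeping issue you flag at the end, so the gap is acknowledged rather than overlooked, but it is the actual content of the paper's argument rather than a routine check.
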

\begin{proof} Since $\phi(\sigma_i)\prec \sigma_i$,
there exists a splitting
and shifting sequence connecting $\sigma_i$ to $\phi_i(\sigma_i)$. 
Our goal is to extend this sequence of a splitting and
shifting sequence of $\tau$.

By construction, there are at most two 
small branches $b_1,b_2$ of $\tau$ which connect
$\sigma_i$ to the primitive vertex cycle $c_i$ of $\tau$.
Let $e$ be a large branch of $\sigma_i$. 
If none of the two small branches $b_1,b_2$ of $\tau$ 
has an endpoint in $e$, then $e$ is a large branch of $\tau$ and
a split of $\sigma_i$ at $e$ can be viewed as a split of $\tau$ at $e$. 
The split track contains $c_i$ as a clean vertex cycle. 

If an endpoint of one of the branches $b_1,b_2$ is contained in $e$ 
then there is a modification
of $\tau$ by a sequence of shifts and splits at branches of $\tau$ contained in $e$ 
such that the modified
train track $\tau^\prime$ is large, it contains $e$ as a large branch and 
the images of $b_1,b_2$ in $\tau^\prime$ are small branches in $\tau^\prime$.
These splits and shifts move an endpoint of $b_i$ contained in $e$ across
an endpoint of $e$. In this way we obtain 
from a splitting and shifting sequence connecting 
$\sigma_i$ to $\phi_i(\sigma_i)$ a splitting and shifting sequence 
which connects $\tau$ to a train track $\hat \phi_i(\tau)$ containing $c_i$ as 
a primitive vertex cycle and such that the restriction of a carrying 
map $\hat \phi_i(\tau)\to \tau$ to $\hat \phi_i(\sigma_i)$ coincides with the 
carrying map $\phi_i(\sigma_i)\to \sigma_i$. 

Since the positions of the endpoints of the small branches $b_i$ are 
moved along sides of complementary components of $\sigma_i$, 
the train track $\hat \phi_i(\tau)$ may not be isomorphic to $\tau$.
However, since $\phi$ preserves the combinatorics by assumption, it
maps the complementary component $C$ of $\sigma_i$ which contains
the curve $c_i$ to the complementary component 
$\phi_i(C)$ of $\phi_i(\sigma_i)$ containing
$c_i$, and it maps a side of $C$ containing an endpoint of one of the
small branches $b_i$ to the side of $\phi_i(C)$ containing
the endpoint of the same branch (with the canonical identification).
Thus there is a unique way to slide the endpoints of $b_1,b_2$ along
these sides in such a way that the extension of $\phi_i(\sigma_i)$
constructed in this way is isomorphic to $\tau$ with an 
isomorphism (that is, a mapping class), 
which preserves $c_i$ and restricts to $\phi_i$ on $\sigma_i$.
This is what we wanted to show.
\end{proof}

In the sequel we will always identify the map $\phi_i$ with this
particular extension, that is, we view $\phi_i$ as a reducible
mapping class for $S$ with $\phi_i(\tau)\prec\tau$ and such that
there is a carrying map $\phi_i(\tau)\to \tau$ which is the 
identity on $\tau-\sigma_i$. The map $\phi_0$ is viewed as a reducible
element of ${\rm Mod}(S)$ fixing $c_1\cup c_2$ 
pointwise.

In the statement of the following lemma,
$\circ$ means
composition from the left, that is, $a\circ b$ represents the mapping
class obtained by applying $b$ first followed by an
application of $a$. This amounts to 
using the notational convention
that $(\phi\circ\psi)(\tau)$ is the train track obtained from
$\tau$ by first changing the marking with $\psi$ and
afterwards with $\phi$.

\begin{lemma}\label{pA2}
For every $k>0$ the mapping class 
\[\zeta(k)=(\phi_0^k\circ \phi_2\circ 
\phi_0^{k})\circ (\phi_0^k\circ\phi_1\circ\phi_0^{k})\]
is pseudo-Anosov.
\end{lemma}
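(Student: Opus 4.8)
The plan is to exhibit $\tau$ as a train track expansion of $\zeta(k)^2$ whose transition matrix is positive, and then to apply Lemma \ref{perron} together with the standard fact from the Nielsen--Thurston classification that a mapping class one of whose powers is pseudo-Anosov is itself pseudo-Anosov.

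First I would record the block structure of the matrices $A(\phi_i,\tau)$ for $i=0,1,2$. By Lemma \ref{dragging}, $\phi_i(\tau)\prec\tau$ with a carrying map $\phi_i(\tau)\to\tau$ which is the identity on $\tau-\sigma_i$; since $\phi_i$ is supported on the subsurface filled by $\sigma_i$, the image $\phi_i(\sigma_i)$ is carried by $\sigma_i$ and the carrying map restricts to a carrying map $\phi_i(\sigma_i)\to\sigma_i$. Hence, ordering the branches of $\tau$ so that the branches of $\sigma_i$ come first, $A(\phi_i,\tau)$ is block diagonal: it equals $A(\phi_i,\sigma_i)$ on the block of $\sigma_i$-branches and the identity on the complementary block. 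The matrices $A(\phi_i,\sigma_i)$ are positive by the choice of the $\phi_i$ from Proposition \ref{countone}, so $A(\phi_0,\tau)^k$ is positive on the block of $\sigma_0$-branches for every $k\ge 1$. I would then fix the partition of the branch set of $\tau$ into the set $D$ of branches of $\sigma_0$, the set $B_1$ of branches incident on a switch of $c_1$, and the set $B_2$ of branches incident on a switch of $c_2$. Since $\tau$ is in special form, $c_1$ and $c_2$ are disjoint clean vertex cycles and $\sigma_0$ is obtained from $\tau$ by deleting precisely $c_1\cup c_2$, their adjacent branches and the branches of the $3$-holed sphere pieces; in particular no branch is incident on a switch of both $c_1$ and $c_2$, so $D\sqcup B_1\sqcup B_2$ is a partition, the branch set of $\sigma_1$ is $D\sqcup B_2$, and the branch set of $\sigma_2$ is $D\sqcup B_1$.

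Because $\psi(\tau)\prec\tau$ implies $\phi_i(\psi(\tau))\prec\phi_i(\tau)\prec\tau$, iterating yields $\zeta(k)(\tau)\prec\tau$, and concatenation of carrying maps (as in the proof of Proposition \ref{countone}) gives
\[
A(\zeta(k),\tau)=A(\phi_0,\tau)^k\,A(\phi_2,\tau)\,A(\phi_0,\tau)^{2k}\,A(\phi_1,\tau)\,A(\phi_0,\tau)^k .
\]
A direct block computation with respect to the partition $(D,B_1,B_2)$, using the structure above, shows that all rows of $A(\zeta(k),\tau)$ indexed by $D$ and by $B_1$ are entirely positive, that the rows indexed by $B_2$ are positive on the columns $D\cup B_2$ and vanish on the columns $B_1$, and that every column has a positive entry in some row indexed by $D$. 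Consequently, for any branches $a,b$ of $\tau$,
\[
\bigl(A(\zeta(k),\tau)^2\bigr)_{ab}\ \ge\ \sum_{c\in D}A(\zeta(k),\tau)_{ac}\,A(\zeta(k),\tau)_{cb}\ >\ 0 ,
\]
so $A(\zeta(k)^2,\tau)=A(\zeta(k),\tau)^2$ is a positive matrix and $\zeta(k)^2(\tau)\prec\tau$. By Lemma \ref{perron} the mapping class $\zeta(k)^2$ is pseudo-Anosov, and therefore so is $\zeta(k)$.

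The step I expect to require the most care is the block computation: one needs the conclusion of Lemma \ref{dragging} in the precise form that $A(\phi_i,\tau)$ is block \emph{diagonal} rather than merely block triangular, and one needs that the decomposition $D\sqcup B_1\sqcup B_2$ really exhausts the branch set of $\tau$, i.e.\ that no branch meets both $c_1$ and $c_2$; such a branch would be fixed by all three of $\phi_0,\phi_1,\phi_2$ and would prevent $\zeta(k)$ from being pseudo-Anosov. Both points hold by the way $\tau$ was put in special form in Section \ref{sec:components}.
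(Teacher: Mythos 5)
Your proof is correct and follows essentially the same route as the paper's: both pass to $\zeta(k)^2$, exploit the block-diagonal structure of $A(\phi_i,\tau)$ with respect to the partition of branches into $\sigma_0$-branches, $(\tau-\sigma_1)$-branches, and $(\tau-\sigma_2)$-branches (supplied by Lemma \ref{dragging}), verify that $A(\zeta(k)^2,\tau)$ is a positive matrix by a block computation, and invoke Lemma \ref{perron}. The only discrepancy is cosmetic: you describe $B_i$ as ``branches incident on a switch of $c_i$,'' whereas the correct sets are all of $\tau-\sigma_1$ and $\tau-\sigma_2$ (these also contain branches lying entirely inside the $3$-holed sphere pieces attached to $c_i$, e.g.\ a once-punctured-monogon loop); with that reading your partition $D\sqcup B_1\sqcup B_2$ does exhaust the branches of $\tau$, which is what your argument actually uses and what the paper uses via the subspaces $\mathbb{R}^{\ell_0},\mathbb{R}^{\ell_1},\mathbb{R}^{\ell_2}$.
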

\begin{proof}
Note first that $\zeta(k)(\tau)\prec \tau$ for all $k$. 
Namely, by assumption on $\phi_i$ 
we have $\phi_i(\tau)\prec\tau$ (see the discussion in Lemma \ref{dragging}
and its proof) 
and hence by invariance of the carrying relation under the
action of the mapping class group and induction, we conclude that
\[\zeta(k)(\tau) =
(\phi_0^k\circ \phi_2\circ \phi_0^{2k}\circ\phi_1 \circ 
\phi_0^{k}(\tau))\prec 
\tau.\]

Let as before $m$ be the number of branches of $\tau$ and let
$\mathbb{R}^m$ be the real vector space with basis
the branches of $\tau$.
Write $\mathbb{R}^m=\mathbb{R}^{\ell_0}\oplus 
\mathbb{R}^{\ell_1}\oplus \mathbb{R}^{\ell_2}$
where $\mathbb{R}^{\ell_0}$ is the real vector space with basis the 
branches of $\tau$ contained 
in $\sigma_0$ and where for $i=1,2$ the vector space
$\mathbb{R}^{\ell_i}$ has a basis consisting of the branches of 
$\tau-\sigma_{i+1}$. Then $\mathbb{R}^{\ell_0}\oplus 
\mathbb{R}^{\ell_i}$ is the real vector space with basis
the branches of $\sigma_i$.

Since $\zeta(k)^2$ is pseudo-Anosov if and only if this
is true for $\zeta(k)$, by  
Lemma \ref{perron}, it suffices to show that the linear self map
\[A(\zeta(k)^2,\tau)=A(\phi_0^k,\tau)\cdots A(\phi_1,\tau)A(\phi_0^k,\tau)\]
of $\mathbb{R}^m$ 
is given with respect to the above basis by a positive matrix.
This is equivalent to stating that 
a carrying map 
$\zeta(k)^2\tau\to \tau$ maps every branch
of $\zeta(k)^2\tau$ \emph{onto} $\tau$.

For $i=1,2$ define
\[A_i=A(\phi_0^k\circ \phi_i\circ \phi_0^k,\tau).\]
The linear map $A_1$ preserves the decomposition
$\mathbb{R}^m=\mathbb{R}^{\ell_0+\ell_1}\oplus \mathbb{R}^{\ell_2}$ 
and therefore
it is given by a matrix in block form. 
The square matrix which corresponds to the restriction of 
$A_1$ to $\mathbb{R}^{\ell_0+\ell_1}$ is positive, and the
square matrix which corresponds to the action on 
$\mathbb{R}^{\ell_2}$ is the identity. 
The same holds true for the linear map $A_2$, with the roles of $\ell_1$ and 
$\ell_2$ exchanged. 

As a consequence, 
the image of a basis vector in $\mathbb{R}^{\ell_0+\ell_1}$ under the
matrix $A_2A_1$ is a positive vector in $\mathbb{R}^m$. 
Similarly, the image of a basis vector in $\mathbb{R}^{\ell_0+\ell_2}$ under
the matrix $A_1A_2$ is a positive vector in $\mathbb{R}^m$ and hence 
the matrix
$A_2A_1A_2A_1$ is indeed positive. This is what we wanted to show.
\end{proof}

For the proof of Theorem \ref{main}, we 
count periodic orbits 
in the $\chi$-thin part of a component ${\cal Q}$ of a 
stratum by counting orbits which 
are defined by pseudo-Anosov classes of the form 
described in Lemma \ref{pA2} where we let $\phi_1$ vary, and 
we fix $\phi_0,\phi_2$ and some large enough $k$.
The next lemma shows that indeed, such mapping classes give rise
to periodic orbits in the thin part of moduli space. 
Recall that the axis of a pseudo-Anosov mapping class
$\phi$ is the Teichm\"uller geodesic which is invariant under $\phi$.


\begin{lemma}\label{geometriccontrol}
For every $\chi >0$ there exists a number $k_0=k_0(\chi) >0$
with the following property. If $k\geq k_0$ then the axis of each of 
the mapping classes $\zeta(k)$ as in 
Lemma \ref{pA2} does not intersect ${\cal T}(S)_\chi$.
\end{lemma}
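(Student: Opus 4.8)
The plan is to show that, for large $k$, at least one of the two clean vertex cycles $c_1,c_2$ of $\tau$ is uniformly short in the flat metric at \emph{every} point of the axis of $\zeta(k)$, and then to invoke Lemma~\ref{systole}: by that lemma it suffices to produce, for a given $\chi>0$, a number $k_0$ such that for $k\ge k_0$ every area one flat surface $q$ on the axis of $\zeta(k)$ carries an essential simple closed curve of $q$-length less than $\delta(S,\chi)$. First I would pass to the bi-infinite splitting and shifting sequence $(\tau_i)_{i\in\mathbb Z}$ of the pseudo-Anosov map $\zeta(k)$ with $\tau_0=\tau$; it is $\zeta(k)$-periodic, all of its terms lie in ${\cal L\cal T}(\tilde{\cal Q})$ by Lemma~\ref{splitinv}, and the axis of $\zeta(k)$ passes through ${\cal Q}(\tau_i)$ for every $i$ (Lemmas~\ref{perron} and~\ref{attracting}, applied to $\zeta(k)^2$, whose carrying matrix on $\tau$ is positive by Lemma~\ref{pA2}). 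One period decomposes, by Lemma~\ref{dragging}, into five blocks realizing the splitting sequences of the five syllables $\phi_0^{k},\phi_1,\phi_0^{2k},\phi_2,\phi_0^{k}$ of $\zeta(k)$, and no modification inside a $\phi_0$-block affects a branch of $\tau-\sigma_0$, none inside the $\phi_1$-block affects a branch incident on $c_1$, and none inside the $\phi_2$-block affects a branch incident on $c_2$. Hence $c_1$ remains a clean vertex cycle of $\tau_i$ throughout every block except possibly the $\phi_2$-block, and $c_2$ throughout every block except possibly the $\phi_1$-block; in particular at each point of the axis at least one of $c_1,c_2$ is a clean vertex cycle of the associated $\tau_i$, and the corresponding flat surface lies in ${\cal Q}(\tau_i)$.

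Next I would make the shortness quantitative. If $c=e\cup s$ is a clean vertex cycle of a large train track $\eta$ (large branch $e$, small branch $s$) and $q\in{\cal Q}(\eta)$ has area one, then the flat geodesic in the free homotopy class of $c$ runs along $e$ and $s$, so its length is at most $C\bigl(a(e)+a(s)+b(e)+b(s)\bigr)$, where $a$ is the transverse measure on $\eta$ of the horizontal lamination of $q$, $b$ the tangential measure of the vertical lamination, and $C$ depends only on the topological type. Now $\phi_0$ is supported on $S_0$ and fixes $c_1\cup c_2$; in the linear coordinates of $\tau$ its action on the cone of transverse measures, and dually on the cone of tangential measures, expands the part supported on the branches of $\sigma_0$ with a definite exponential rate $\log\lambda_0>0$ (here $\lambda_0$ is the dilatation of $\phi_0|_{S_0}$) while keeping the part supported on the branches of $\tau-\sigma_0$ controlled along the $\phi_0$-blocks. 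After renormalizing the area to one this forces the weights $a(e_i),a(s_i),b(e_i),b(s_i)$ on the two branches $e_i,s_i\subset\tau-\sigma_0$ forming $c_i$ to be $O(\lambda_0^{-c\,k})$ uniformly on the middle $\phi_0^{2k}$-block and on the two $\phi_0^{k}$-blocks, for a constant $c=c(\tau,\phi_0)>0$, whence $\ell_q(c_i)=O(\lambda_0^{-c'k})$ there. In geometric language this is the statement that along these long blocks the axis of $\zeta(k)$ fellow travels the axis of the pseudo-Anosov $\phi_0|_{S_0}$ in the stratum obtained by pinching $c_1$ and $c_2$, so the complement of the $S_0$-part of the flat surface carries only area $O(\lambda_0^{-ck})$ and its core curves $c_1,c_2$ are correspondingly short.

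It remains to bridge the short $\phi_1$- and $\phi_2$-blocks and the ends of the long blocks. The $\phi_1$- and $\phi_2$-blocks have Teichm\"uller length bounded independently of $k$, and flat lengths of a fixed curve change by at most a bounded factor over bounded Teichm\"uller distance; since $\phi_1$ fixes $c_1$ (and $\phi_2$ fixes $c_2$), the curve $c_1$ enters the $\phi_1$-block already of length $O(\lambda_0^{-c'k})$ and stays $O(\lambda_0^{-c'k})$ throughout it, and symmetrically for $c_2$ in the $\phi_2$-block; near an end of a $\phi_0$-block the length of the relevant vertex cycle varies like a hyperbolic cosine about a minimum lying well inside the block, so it stays $O(\lambda_0^{-c''k})$ along the entire block. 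Choosing $k_0=k_0(\chi)$ with $\lambda_0^{-c''k_0}<\delta(S,\chi)$ then yields the lemma. The main obstacle is precisely this uniformity: one must bound the proportion of mass that a transverse, respectively tangential, measure from the relevant cone puts on the branches of $\tau-\sigma_0$ not merely at the symmetric centre of a long block but at every interior point, keeping track of the area renormalization and of the bounded contribution of the $\phi_1$- and $\phi_2$-syllables; an alternative is a compactness argument, letting $k\to\infty$ and deriving a contradiction from a putative sequence of points on the axes of the $\zeta(k)$ lying in ${\cal T}(S)_\chi$.
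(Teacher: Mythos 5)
Your proposal shares the paper's high-level strategy: invoke Lemma~\ref{systole} to reduce the lemma to producing a uniformly short curve in the flat metric along the whole axis, and exploit the syllable structure of $\zeta(k)=(\phi_0^k\phi_2\phi_0^{2k}\phi_1\phi_0^k)$ to argue that $c_1$ is short on one part of a fundamental domain and $c_2$ on the other. The decomposition into blocks and the observation that $\phi_0^k$ concentrates transverse measure on $\sigma_0$ (making the weights on $\tau-\sigma_0$ decay in $k$) are both present in the paper, which formalizes this through the cones $C_i(\tau,\epsilon)\subset{\cal V}_0(\tau)$ and a Brouwer fixed point argument showing that the horizontal measured lamination of a point on the axis in ${\cal Q}_0(\tau)$ lies in $C_0(\tau,\epsilon)$.

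Where you have a genuine gap is exactly where you flag ``the main obstacle.'' Two points are unresolved. First, your flat-length estimate feeds both the transverse measure $a$ of the horizontal lamination and the tangential measure $b$ of the vertical lamination through the branches of the vertex cycle, but the paper uses a sharper and different input on the horizontal side, namely Lemma~2.5 of \cite{H06}: $\iota(\mu,c_i)\leq\mu(\tau-\sigma_i)$, which together with $\mu\in C_0(\tau,\epsilon)$ gives $\iota(\mu,c_i)\leq\epsilon$ directly. Second, and more seriously, the vertical side cannot be handled by the claimed ``dual'' expansion of tangential measures under $\phi_0$: along the axis the Teichm\"uller flow \emph{expands} the vertical measure, so the mass deposited on $c_i$ does not simply shrink, and renormalization has to be tracked. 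The paper resolves this by passing to the conjugate word $\hat\zeta(k)=(\phi_0^k\phi_1\phi_0^k)(\phi_0^k\phi_2\phi_0^k)$ and using the normalization $\iota(\chi\hat\mu,\chi^{-1}\hat\nu)=1$ at the cross-section ${\cal Q}_0(\tau)$; positivity of the transverse measure (weight $\geq a$ on each branch of $\phi_0^k(\sigma_2)$) then forces the tangential measure to satisfy $\chi^{-1}\hat\nu^*(b)\leq 1/a$, hence $\iota(\chi^{-1}\hat\nu,c_1)\leq 2/a$, and $\chi(k)\to\infty$ finishes the estimate. This intersection-pairing argument, which your sketch does not supply, is the mechanism that achieves uniformity across the $\phi_0^{2k}$-block without needing a compactness argument (which would be delicate here anyway since the axes themselves vary with $k$ and leave every compact set).
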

\begin{proof} By Lemma \ref{systole}, it suffices to show the following.
For every $\epsilon >0$, there exists a number $k=k(\epsilon)>0$ such that
if $k>k(\epsilon)$ and if
$q\in \tilde {\cal Q}$ is contained in the cotangent line of an axis of 
$\zeta(k)$, then there exists an essential simple closed curve $c$ on $S$ of 
$q$-length at most $\epsilon$.

Using the notations from Lemma \ref{dragging}, 
the carrying map $\phi_i(\tau)\to \tau$ can be chosen in such
a way that it maps each branch of $\phi_i(\sigma_i)$ 
onto $\sigma_i$, and it induces the identity on 
$\tau-\sigma_i$ $(i=0,1,2)$.

For a transverse measure $\mu\in {\cal V}(\tau)$ and a subset
$B$ of the branches of $\tau$ denote by $\mu(B)$ the total mass
deposited by $\mu$ on the branches in $B$. Clearly $\mu(B)\geq 0$ for all 
$\mu,B$. 

By Proposition \ref{countone} and the construction,  
for $i=1,2$ 
there is a number $a_i>0$ with the following 
property. Let $\mu_i$ be a measured geodesic
lamination on $S$ which is carried by 
$\phi_i(\sigma_i)$
and which defines the transverse measure
$\mu_i\in {\cal V}(\phi_i(\sigma_i))\subset 
{\cal V}(\sigma_i)\subset {\cal V}(\tau)$;
then $\mu_i(b_1)/\mu_i(b_2)\leq a_i$ for 
any two branches $b_1,b_2$ of $\sigma_i$.

This implies 
the existence of a number $a>0$ with the following property.
Let $\mu$ be \emph{any} measured geodesic lamination
which is carried by $\phi_i(\tau)$. Assume that the
transverse measure on $\phi_i(\tau)$ defined by $\mu$ is
not supported in $\phi_i(\tau)-\phi_i(\sigma_i)$. 
Let $\hat \mu$ be the transverse
measure on $\tau$ induced
from $\mu$ by a carrying map 
$\phi_i(\tau)\to \tau$; then  
\begin{equation}\label{branchweight}
\hat \mu(b_i)\geq 2a \hat \mu(\sigma_i )\end{equation}
for every branch $b_i$ of 
$\sigma_i$ $(i=1,2)$.

Let ${\cal V}_0(\tau)\subset {\cal V}(\tau)$ be
the set of all transverse measures on $\tau$ of total mass one.
Note that ${\cal V}_0(\tau)$ is naturally homeomorphic to the
projectivization of ${\cal V}(\tau)$.
For $\epsilon \in (0,\frac{1}{2})$ and $i=0,1,2$ let
$C_i(\tau,\epsilon)$
be the closed subset of  ${\cal V}_0(\tau)$
containing all transverse measures $\nu$
with the following properties.
\begin{enumerate}
\item
$\nu(\tau-\sigma_i)\leq  \epsilon$.
\item For any branch $b_i$ of 
$\sigma_i<\tau$ we
have $\nu(b_i)\geq a$.
\end{enumerate}
Note that the set $C_i(\epsilon)$ is not empty by
the above choice of the number $a$.
Also, by the second property above, we have
$\nu(\sigma_0)\geq 2a$ for every
$\nu\in C_i(\tau,\epsilon)$ and $i=1,2$ (recall that 
$\sigma_0$ contains at least two branches).

As in the proof of Lemma \ref{pA2}, let 
\[\mathbb{R}^m=\mathbb{R}^{\ell_0}\oplus \mathbb{R}^{\ell_1}\oplus
\mathbb{R}^{\ell_2}\]
be the vector space with basis the branches of $\tau$. The subspace
$\mathbb{R}^{\ell_0}$  is spanned by the branches of 
$\tau$ contained in $\sigma_0$. 
A point $\nu\in C_i(\tau,\epsilon)$ $(i=0,1,2)$ 
can be viewed as a non-negative 
vector $v(\nu)\in \mathbb{R}^m$  
with the property that the coefficients of 
the basis elements in 
$\mathbb{R}^{\ell_0}\oplus\mathbb{R}^{\ell_i}$ 
are bounded from below by $a>0$ 
and that the sum of the coordinates equals one. 

Denote as before by $A(\phi_i,\tau)$ the 
linear map which describes the action of
$\phi_i$ on ${\cal V}(\tau)$, represented by a matrix with respect
to the basis of $\mathbb{R}^m$ given by the branches of $\tau$.
There is an induced action on ${\cal V}_0(\tau)$ by rescaling
of the total mass; we denote this action by
$\hat A(\phi_i,\tau)$. 
We claim that there is a 
constant $u>0$ only depending on $\phi_0$ such that
for every $\epsilon >0$, for every $k\geq -u\log\epsilon$
and for $i=1,2$ we have
\[\hat A(\phi_0,\tau)^k(C_{i}(\tau,\epsilon))
  \subset C_0(\tau,\epsilon).\]


To show the claim note that
the linear map $A(\phi_0,\tau)$ preserves the decomposition 
$\mathbb{R}^m=\mathbb{R}^{\ell_0}\oplus \mathbb{R}^{\ell_1}\oplus \mathbb{R}^{\ell_2}$
and hence its matrix 
is in block form. The square matrix 
$A_0$ which defines the action on 
$\mathbb{R}^{\ell_0}$ is positive and integral,
and the square matrix defining the 
action on $\mathbb{R}^{\ell_1}\oplus \mathbb{R}^{\ell_2}$ is the identity.
Since $\ell_0\geq 2$, the entries of the matrix
$A_0^k$ are bounded from below by $2^{k-1}$ and hence 
\[A(\phi_0,\tau)^k(\nu)(\sigma_0)\geq 2^{k-1}\nu(\sigma_0)\text{ 
for all }\nu\in {\cal V}(\tau).\]
Now if $\nu\in C_i(\tau,\epsilon)$ then we have 
$\nu(\sigma_0)\geq 2a$ and therefore  
\[A(\phi_0,\tau)^k(\nu)(\sigma_0)\geq a2^{k},\]
moreover $A(\phi_0,\tau)^k(\nu)(\tau-\sigma_0)\leq 1-2a$ for all $k\geq 1$.
Thus if 
\[k\geq \hat k(\epsilon)= (\log(2(1-a))-\log(a\epsilon))/\log(2)\geq 2\]
then it holds $A(\phi_0,\tau)^k(\nu)(\sigma_0)\geq
A(\phi_0,\tau)^k(\nu)(\tau-\sigma_0)/\epsilon$.
Together with the estimate (\ref{branchweight}), this shows 
\[A(\phi_0,\tau)^k(\nu)/(A(\phi_0,\tau)^k(\nu)(\tau))\in C_0(\tau,\epsilon)\] 
and consequently 
$\hat A(\phi_0,\tau)^k(C_i(\tau,\epsilon))\subset C_0(\tau,\epsilon)$
for all $k\geq \hat k(\epsilon)$.

By definition of the maps $\phi_i$, we also have 
$\hat A(\phi_i,\tau)(C_0(\tau,\epsilon))\subset C_i(\tau,\epsilon)$ for 
$i=1,2$. 
Together we deduce the existence of 
a number $k(\epsilon)\sim -\log \epsilon>0$ such that
for $k> k(\epsilon)$, 
the set $C_0(\tau,\epsilon)$ is invariant under
the map which assigns to a measured 
geodesic lamination $0\not=\mu\in {\cal V}(\tau)$
the normalized
image of $\zeta(k)\mu\in {\cal V}(\zeta(k)\tau)$
under a carrying map ${\cal V}(\zeta(k)\tau)\to
{\cal V}(\tau)$. 

Let ${\cal Q}_0(\tau)$ be the set of all differentials
$q\in {\cal Q}(\tau)$ with the property that the total mass
deposited on $\tau$ by the horizontal measured geodesic lamination of
$q$ equals one.
Then 
if $k>k(\epsilon)$ and if $q\in {\cal Q}_0(\tau)$
is contained in the $\zeta(k)$-invariant flow line
of the Teichm\"uller flow, with horizontal 
measured geodesic lamination $\mu$, 
then
\[\mu\in C_0(\tau,\epsilon).\] Namely, the open cone
$C_0(\tau,\epsilon)\subset {\cal V}_0(\tau)$
is non-empty and invariant under the action of 
the map $\zeta(k)$, viewed as a map 
${\cal V}_0(\tau)\to {\cal V}_0(\tau)$.
Since the closure of $C_0(\tau,\epsilon)$ is homeomorphic to
closed a ball and we have $\zeta(k)C_0(\tau,\epsilon)\subset
C_0(\tau,\epsilon)$, the map $\zeta(k)$ has a fixed point in
$C_0(\tau,\epsilon)$ by the Brouwer fixed point theorem.
As $\zeta(k)$ is 
pseudo-Anosov, the projectivization of this fixed point is 
the attracting fixed point for the action of 
$\zeta(k)$ on ${\cal P\cal M\cal L}$, that is, it equals the 
horizontal projective measured lamination of $q$.

The $q$-length of the simple closed curve $c_i$
is contained in the interval
\[ [\frac{1}{\sqrt{2}}(\iota(\mu,c_i)+\iota(\nu,c_i)),
\iota(\mu,c_i)+\iota(\nu,c_i)]\]
where $\mu,\nu$ is the horizontal and the
vertical measured geodesic lamination of $q$,
respectively \cite{R14}. The intersection 
numbers $\iota(\mu,c_i),\iota(\nu,c_i)$ 
can be estimated as follows. 

By Lemma 2.5 of \cite{H06}, we have 
\[\iota(\mu,c_i)\leq \mu(\tau-\sigma_i)\]
and hence as $\mu\in C_0(\tau,\epsilon)$,
it holds
\begin{equation}\label{intersection}
  \iota(\mu,c_i)\leq \epsilon.\end{equation}

The support of the vertical (or horizontal) 
measured geodesic lamination
of a quadratic differential $q$ is invariant under the 
action of the Teichm\"uller flow $\Phi^t$, 
and its transverse measure scales with the scaling constant
$e^{t/2}$ (or with the scaling constant $e^{-t/2}$- note that this 
means that the length of the horizontal measured geodesic 
lamination is decreasing along a flow line of the Teichm\"uller flow). 
Recall that $\phi_0^k\phi_1\phi_0^k(c_1)=c_1$ for all $k$.
The estimate (\ref{intersection}) thus implies that
$\iota(e^{-t/2}\mu,c_1)<\epsilon$ for all $t\geq 0$.
  In other words, for $t\geq 0$, 
  the intersection between
  the horizontal measured geodesic lamination of
  $\Phi^tq$ and $c_1$ does not exceed $\epsilon$.

Let $\kappa >0$ be such that 
$\Phi^{2\kappa} q\in {\cal Q}_0(\phi_0^k\phi_1\phi_0^k(\tau))$, that is,
the total weight deposited by $e^\kappa\mu$ on $\phi_0^k\phi_1 \phi_0^k(\tau)$ 
equals one.
 Our goal is to show that the $\Phi^tq$-length of $c_1$
is smaller than $2\epsilon$ for $0\leq t\leq \kappa$. To this end
it suffices to show that
$\iota(c_1,e^{\kappa}\nu)<\epsilon$. 
To facilitate the notations, we show this by
replacing $\zeta(k)=(\phi_0^k\circ\phi_2\circ\phi_0^k)\circ
(\phi_0^k\circ \phi_1\circ \phi_0^k)$ by its conjugate
\[\hat\zeta(k)=(\phi_0^k\circ \phi_1\circ \phi_0^k)
 \circ (\phi_0^k \circ \phi_2\circ \phi_0^k).\]
 This conjugate admits $\tau$ as train track expansion. 
 Let $\hat \nu$ be the vertical measured geodesic
lamination of the differential $\hat q\in{\cal Q}_0(\tau)$
on the cotangent line of $\hat\zeta(k)$. We claim that  
$\iota(c_1,\hat\nu)<\epsilon$ which is equivalent to stating 
that $\iota(c_1,e^\kappa \nu)<\epsilon$.

To see that this is indeed the case recall that
by construction, the
carrying map $\phi_2\circ\phi_0^k(\tau)
\prec\phi_0^k(\tau)$
maps every branch of $\phi_2(\phi_0^k(\sigma_2))$ \emph{onto}
$\phi_0^k(\sigma_2)$. More precisely, the following holds true.
Let $\hat \mu\in {\cal V}_0(\tau)$ be the horizontal
measured geodesic lamination of $\hat q$ and 
let $\chi=\chi(k)<1$ be such that the total
weight of the measured geodesic lamination
$\chi\hat \mu$ on $\phi_0^{k}(\tau)$ 
equals one. Then the $\chi\hat \mu$-weight of 
every branch in
$\phi_0^k(\sigma_2)< \phi_0^k(\tau)$ is bounded from 
\emph{below} by the number $a$ introduced in the beginning of this proof.

The intersection number 
$1=\iota(\chi\hat \mu,\chi^{-1}\hat \nu)=
\iota(\hat \mu,\hat \nu)$ can be
calculated as 
\[\iota(\chi\hat \mu,\chi^{-1}\hat \nu)=
\sum_b \chi\hat \mu(b)\chi^{-1}\hat\nu^*(b)\]
where the sum is over all branches $b$ 
of $\phi_0^{k}(\tau)=\eta$ and 
$\chi\hat \mu(b)$ and 
$\chi^{-1}\hat \nu^*(b)$ are the weights
of $b$ for the transverse or tangential measure
determined by $\chi\hat \mu,\chi^{-1}\hat \nu$ \cite{PH92}.
As this intersection number equals one, we have
\[\chi^{-1}\hat \nu^*(b)\leq 1/a\] for every 
branch $b$ of the
subtrack $\phi_0^{k}(\sigma_2)$. As $c_1\subset
\phi_0^k(\tau)$ is an embedded subtrack consisting of
precisely two branches, we obtain
\[\iota(\chi^{-1}\hat \nu,c_1)\leq 2/a.\] 

It follows from the above discussion that $\chi(k)\to\infty$ 
as $k\to \infty$. 
Thus for sufficiently large $k$, say for $k\geq k(\epsilon)$, 
we indeed have
$\iota(c_1,\hat \nu)<\epsilon$.

We showed so far the following. Let $k\geq k(\epsilon)$, let
$\zeta(k)$ be as in the statement of the lemma and let
$q\in {\cal Q}_0(\tau)$
be contained in the unit contangent line of
an axis of $\zeta(k)$.
Let $\kappa_1>0$ be such that 
$\Phi^{\kappa_1} q\in {\cal Q}_0(\phi_0^k\circ \phi_1\circ\phi_0^k(\tau));$ 
 then for every $t\in [0,\kappa_1]$, the
$\Phi^tq$-length of the curve $c_1$ is at most $2\epsilon$.
The same argument also shows that if $\kappa_2>0$ is such that 
$\Phi^{\kappa_2}q\in {\cal Q}_0(\zeta(k)(\tau))$ then
the $\Phi^tq$-length of $\phi_0^k\phi_1\phi_0^k(c_2)$ is at most
$2\epsilon$ for $\kappa_1\leq t\leq \kappa_2$. 
But $\{\Phi^tq\mid 0\leq t\leq \kappa_2\}$
is a fundamental domain for the action of
$\zeta(k)$ on the contangent line of its axis and hence
the proof of the lemma is completed.
\end{proof}

We are now ready to complete the proof of Theorem \ref{main}.

\begin{theorem}\label{existence}
Let $S$ be a closed surface 
of genus $g\geq 0$ with $n\geq 0$ punctures and 
$3g-3+n\geq 5$. Then for every component ${\cal Q}$ 
of a stratum in ${\cal Q}(S)$ or ${\cal H}(S)$ and 
for every $\epsilon >0$, there exists a number
$d=d({\cal Q},\epsilon)>0$ such that
\[n({\cal Q},R)^{<\epsilon}\geq e^{(h({\cal Q})-1)R}/dR\]
for all sufficiently large $R>0$.
\end{theorem}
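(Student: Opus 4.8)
The plan is to prove Theorem~\ref{existence} by producing, for a fixed $\epsilon>0$, an exponentially large supply of periodic orbits of $\Phi^t$ contained in ${\cal Q}-{\cal Q}(S)_\epsilon$ whose lengths are controlled, using the concatenated pseudo-Anosov classes $\zeta(k)$ of Lemma~\ref{pA2}. First I would fix a train track $\tau$ in special form for ${\cal Q}$, which exists by Lemma~\ref{spheretorus}, Lemma~\ref{hyperelliptic} and Proposition~\ref{primitive}, together with its clean vertex cycles $c_1,c_2$ and the induced large train tracks $\sigma_0,\sigma_1,\sigma_2$; these determine components ${\cal Q}_0,{\cal Q}_1,{\cal Q}_2$ of strata in the principal boundary of ${\cal Q}$ with $h({\cal Q}_i)=h({\cal Q})-2$. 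Using Proposition~\ref{countone} for $\sigma_0$ and $\sigma_2$ I would fix once and for all pseudo-Anosov building blocks $\phi_0\in{\rm Mod}(S_0)$ and $\phi_2\in{\rm Mod}(S_2)$, lifted to reducible classes on $S$ by Lemma~\ref{dragging}, and then invoke Lemma~\ref{geometriccontrol} to choose $k=k_0(\epsilon)$ large enough that the axis of every mapping class $\zeta(k)=(\phi_0^k\circ\phi_2\circ\phi_0^k)\circ(\phi_0^k\circ\phi_1\circ\phi_0^k)$ of the form in Lemma~\ref{pA2} is disjoint from ${\cal T}(S)_\epsilon$; by Lemma~\ref{systole} and the projection $P$, each such axis descends to a periodic orbit of $\Phi^t$ contained in ${\cal Q}-{\cal Q}(S)_\epsilon$.

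Next I would let $\phi_1$ range over the set ${\cal P\cal A}(\sigma_1,d_1)$ furnished by Proposition~\ref{countone} for the component ${\cal Q}_1$, so that the number of its elements of translation length at most $T$ is at least $e^{h({\cal Q}_1)T}/(d_1h({\cal Q}_1)T)$ for $T$ large. For each such $\phi_1$, Lemma~\ref{dragging} gives a reducible lift with $\phi_1(\tau)\prec\tau$, Lemma~\ref{pA2} shows that $\zeta(k)$ is pseudo-Anosov with $\tau$ a train track expansion and preserving the combinatorics, and Lemma~\ref{geometriccontrol} shows its axis avoids ${\cal T}(S)_\epsilon$. A length comparison, argued exactly as in the proof of Proposition~\ref{countone} — composition with a fixed mapping class changes Teichm\"uller translation length by a uniformly bounded amount, the entry ratios of $A(\phi_1,\sigma_1)$ are bounded by $d_1$, and by Lemma~\ref{geometriccontrol} and Rafi's estimate \cite{R14} the Teichm\"uller time $\zeta(k)$ spends on its ``$\phi_1$--excursion'' near the $c_1$--thin part agrees with the translation length of $\phi_1$ up to bounded error — gives that the translation length of $\zeta(k)$ is at most that of $\phi_1$ plus a constant $C=C(\epsilon,\phi_0,\phi_2,k)$. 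I would then verify that the assignment of $\phi_1$ to the free homotopy class of the resulting periodic orbit in ${\cal Q}$ has multiplicity bounded by a constant $m_0=m_0({\cal Q},\epsilon)$: the train track $\tau$, the curves $c_1,c_2$, and hence the conjugacy class of $\phi_1$ in ${\rm Mod}(S_1)$, can be reconstructed from $\zeta(k)$ — up to the bounded ambiguity contributed by the fixed data $\phi_0,\phi_2,k$ — from its canonical train track expansion and the positions of $c_1,c_2$ as primitive vertex cycles.

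These ingredients combine to give, for $R$ large,
\[ n({\cal Q},R)^{<\epsilon}\ \ge\ \frac{1}{m_0}\,n\bigl({\cal P\cal A}(\sigma_1,d_1),\,R-C\bigr)\ \ge\ \frac{e^{h({\cal Q}_1)(R-C)}}{m_0\,d_1\,h({\cal Q}_1)(R-C)}, \]
which by itself produces only the exponent $h({\cal Q}_1)=h({\cal Q})-2$. Supplying the missing exponential factor $e^{R}$ is, I expect, the main obstacle; it must originate from the short curve $c_1$ itself, which in the principal--boundary description carries an additional degree of freedom — the geometry of the collapsing cylinder — contributing entropy $1$. Concretely, I would attach to each $\phi_1$ not one but an exponentially large (in the translation length of $\phi_1$) family of admissible concatenations, differing only in the behavior of the associated axis near $c_1$: for instance by composing the lift of $\phi_1$ with suitable products of the Dehn twist about $c_1$ and splitting moves of $\tau$ concentrated near $c_1$, and re-running the carrying and thinness arguments of Lemmas~\ref{pA2} and~\ref{geometriccontrol} with $\tau$ replaced by the corresponding split, one should obtain pseudo-Anosov classes with axes still in the $\epsilon$--thin part whose orbit classes have pairwise bounded multiplicity. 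Summing over this extra family upgrades the exponent from $h({\cal Q})-2$ to $h({\cal Q})-1$ and yields $n({\cal Q},R)^{<\epsilon}\ge e^{(h({\cal Q})-1)R}/(dR)$ for a suitable $d=d({\cal Q},\epsilon)>0$; the remaining steps are assembled directly from the lemmas already established.
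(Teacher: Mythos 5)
Your proposal tracks the paper's own proof very closely: fix a train track $\tau$ in special form (Lemma~\ref{spheretorus}, Lemma~\ref{hyperelliptic}, Proposition~\ref{primitive}), fix $\phi_0,\phi_2$ and the twisting exponent $k=k_0(\epsilon)$ from Lemma~\ref{geometriccontrol} so that every $\zeta(k)$ has axis avoiding ${\cal T}(S)_\epsilon$, let $\phi_1$ range over the classes furnished by Proposition~\ref{countone} for $\sigma_1$ to obtain the exponent $h({\cal Q})-2$, and then supply the missing factor $e^R$ by inserting powers $D^p$ of the Dehn twist about $c_1$ chosen in the direction for which $D\tau\prec\tau$. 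This is exactly the mechanism the paper uses, so your blind reconstruction is essentially correct and takes the same route.

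Two small remarks on where you make things harder than they need to be. First, for the translation-length comparison the paper does not go through Rafi's thick--thin estimate but bounds the Perron--Frobenius eigenvalue of $A(\zeta(k),\tau)$ by the operator norm $\Vert A(\sigma_1,\phi_1)\Vert\,\Vert A(\sigma_2,\phi_2)\Vert\,\Vert A(\tau,\phi_0)\Vert^{4k}$, which gives the upper bound $T(\phi_1)+T(\phi_2)+\chi(k)$ directly; this is cleaner because only an upper bound on the length of the resulting orbit is needed, not a two-sided comparison with the $\phi_1$--excursion. Second, for the twist step you need neither ``splitting moves of $\tau$ concentrated near $c_1$'' nor a re-run of the carrying argument: since $c_1$ is a clean primitive vertex cycle and $D$ is chosen with $D\tau\prec\tau$, one has $D^p\tau\prec\tau$ for all $p\geq 0$ automatically, so Lemmas~\ref{pA2} and~\ref{geometriccontrol} apply verbatim to $\phi_0^k\circ\phi_2\circ\phi_0^{2k}\circ D^p\circ\phi_1\circ\phi_0^k$ with $0\leq p\leq u\,e^{T(\phi_1)}$; summing the $\sim e^{T(\phi_1)}$ choices of $p$ against the $\sim e^{(h({\cal Q})-2)T}/T$ choices of $\phi_1$ raises the exponent to $h({\cal Q})-1$.
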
 
\begin{proof} For the proof of the theorem, we consider 
periodic orbits which are defined by pseudo Anosov mapping classes
$\zeta(k)=(\phi_0^k\circ \phi_2\circ \phi_0^k)(\phi_0^k\circ \phi_1
\circ \phi_0^k)$
as constructed in Lemma \ref{pA2} 
where we fix
$\phi_0$ and $\phi_2$ and vary $\phi_1$.

Let $T(\phi_i)>0$ be the translation length of the 
pseudo-Anosov mapping class $\phi_i$ acting on the surface $S-c_i$
$(i=1,2)$. 
We claim that there
is a constant $\chi >0$ only depending on 
$k$ (and the choice of $\phi_0$) such that
the translation length of $\zeta(k)$ is 
contained in the interval $(0,T(\phi_1)+T(\phi_2)+\chi]$. 

Namely, the translation length
of $\zeta(k)$ is the logarithm of the Perron
Frobenius eigenvalue of the linear map $A(\tau,\zeta(k))$ 
determined by $\zeta(k)$. This Perron Frobenius
eigenvalue does not exceed the operator norm 
$\Vert A(\zeta(k),\tau)\Vert$ of 
the linear map $A(\zeta(k),\tau)$ with respect to the
norm $\Vert \nu\Vert =\sum_b \vert\nu(b)\vert$ on
$\mathbb{R}^m$. We have 
\[\Vert A(\tau,\zeta(k))\Vert \leq
\Vert A(\sigma_1,\phi_1)\Vert 
\Vert A(\sigma_2,\phi_2)\Vert \Vert B\Vert^{4k} \]
where $\Vert A(\sigma_i,\phi_i)\Vert$ is the operator norm of the
linear map $A(\sigma_i,\phi_i)$ 
defining the map $\phi_i$ (which coincides with
the operator norm of $A(\tau,\phi_i)$)
and where
$\Vert B\Vert $ is the operator norm of the
linear map $A(\tau,\phi_0)$.

On the other hand, by the choice of $\phi_i$, 
there is a constant $\kappa>0$ such that
\[\vert T(\phi_i) -\log \Vert A(\sigma_i,\phi_i)\Vert \vert \leq \kappa.\]
The claim follows. 

By Proposition \ref{countone}, 
there exist numbers $R_0>0,d>0$ such that 
for $T\geq R_0$, the number of mapping classes 
$\phi_i\in {\rm Mod}(S_1)$ of  
translation length at most $R\geq R_0$
which can be used in the construction of the mapping classes
$\zeta(k)$ is bounded from below by
$e^{h({\cal Q})-2}/dR$. 
Then the number of elements in ${\rm Mod}(S)$ of the form 
\[\zeta(k)=
\phi_0^k\circ \phi_2\circ \phi_0^{2k}\circ \phi_1\circ \phi_0^k\]
whose translation length is at most $R\geq R_0+\chi$ 
is at least $a e^{(h({\cal Q})-2)R}/R$ where $a>0$ is computed from 
$\chi$ and from the constant $d$. In particular,  
the asymptotic growth rate of such periodic orbits 
is at least $h({\cal Q})-2$.

Let $D$ be the
Dehn twist about the curve $c_1$ in the direction
determined by $\tau$. By this we mean that
we require $D\tau\prec\tau$ and hence 
$D^k\tau\prec\tau$ for all $k\geq 0$.
Given a nonnegative integer $p\leq u e^{T(\phi_1)}$ for a suitable choice of 
$u>0$ depending on $\epsilon$, 
it follows from 
the above discussion that the mapping class
$\phi_0^k\circ \phi_2\circ\phi_0^{2k}\circ D^p\circ\phi_1\circ\phi_0^k$ 
also has the required properties.
This implies that simultaneous 
twisting about $c_1$ adds one to the exponent in the 
counting of the orbits constructed above and completes
the proof of the theorem.
\end{proof}

\begin{remark} 
 The \emph{stable length} of a pseudo-Anosov
element $g\in {\rm Mod}(S)$ on the \emph{curve
graph} ${\cal C}(S)$ of $S$ is defined to be
\[sl(g)=\lim_{k\to \infty}\frac{1}{k}d(g^kc,c).\]
This does not depend on the choice of $c\in {\cal C}(S)$.

Bowditch \cite{Bw08} showed that there is an integer
$\ell>0$ only depending on the topological type of $S$ such that
the stable length on the curve graph of every pseudo-Anosov element $\phi$
is rational with denominator $\ell$. 
The stable length of each of the (infinitely many)
pseudo-Anosov elements $\zeta(k)$ constructed
in the proof of Theorem \ref{existence} is at most 2. We expect that this
is approximately sharp, which means that the asymptotic  
growth rate of all pseudo-Anosov mapping classes
of stable length at most
$2$ is not bigger than $h({\cal Q}(S))-1$.
\end{remark}

A similar argument also yields Theorem 2 from the introduction.

\begin{theorem}\label{escape}
For every component ${\cal Q}$ of a stratum as
in Theorem \ref{existence} 
there is a Teichm\"uller geodesic
with uniquely ergodic vertical measured geodesic
lamination whose projection to moduli space
escapes with linear speed to infinity.
\end{theorem}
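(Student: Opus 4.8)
The plan is to imitate the construction used for Theorem~\ref{existence}, but instead of concatenating the building blocks $\phi_0^k\circ\phi_1\circ\phi_0^k$ and $\phi_0^k\circ\phi_2\circ\phi_0^k$ into a \emph{periodic} orbit, I would run the concatenation infinitely to produce a single geodesic \emph{ray} which stays in the thin part for a definite proportion of each of infinitely many disjoint time intervals of growing length. Fix a train track $\tau$ in special form for ${\cal Q}$ with its clean vertex cycles $c_1,c_2$, fix the reducible mapping classes $\phi_0,\phi_1,\phi_2$ (extended to $S$ as in Lemma~\ref{dragging}), and fix $k\geq k_0(\chi)$ as in Lemma~\ref{geometriccontrol}. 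First I would form an infinite splitting and shifting sequence of $\tau$ obtained by concatenating, for $j=1,2,3,\dots$, the finite splitting sequences realizing $\phi_0^{kn_j}\circ\phi_1\circ\phi_0^{kn_j}$ followed by $\phi_0^{kn_j}\circ\phi_2\circ\phi_0^{kn_j}$, where $n_j\to\infty$ is a sequence of positive integers to be chosen. This infinite sequence determines, via Lemma~\ref{polyhedron}, a nested intersection of cones $\bigcap_j{\cal V}(\eta_j)$ in ${\cal ML}$; the key point is that this intersection is a single projective point, i.e.\ the resulting vertical lamination $\mu$ is uniquely ergodic. For this I would invoke the standard criterion (as in \cite{H09} and the references there): if the carrying maps $\eta_{j+1}\to\eta_j$ are eventually onto on every branch infinitely often — which holds here because each $\phi_0^{kn_j}$ has positive matrix $A_0^{kn_j}$ on the $\sigma_0$-block, and the $\phi_i$-steps are onto on the $\sigma_i$-blocks, exactly as in the proof of Lemma~\ref{pA2} — then the diameters of the projectivized cones shrink to $0$, and $\mu$ is minimal, filling, and uniquely ergodic.

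Next I would take any quadratic differential $q$ with vertical lamination $\mu$ and any transverse horizontal data carried by the dual, so that $q\in{\cal Q}(\tau)$, and let $\gamma:[0,\infty)\to{\cal M}(S)$ be the Teichm\"uller ray determined by $q$ (oriented so that the Teichm\"uller flow contracts the vertical lamination). I would then copy the estimate in the proof of Lemma~\ref{geometriccontrol} essentially verbatim: on the time interval during which the horizontal lamination is renormalized from $\eta_{j}$-weight one to $\phi_0^{kn_j}\circ\phi_1\circ\phi_0^{kn_j}(\eta_j)$-weight one, the estimates~(\ref{intersection}) and the tangential-measure bound $\iota(c_1,\cdot)\leq 2/a$ together with Rafi's formula \cite{R14} show that the $q$-length of the corresponding copy of $c_1$ is at most $2\epsilon_j$, where $\epsilon_j\to 0$ can be forced by taking $n_j$ large enough (since $k(\epsilon)\sim-\log\epsilon$). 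By Lemma~\ref{systole} this means ${\rm syst}(\gamma(t))\leq\delta(S,\epsilon_j)$ on a subinterval $I_j\subset[0,\infty)$, and the endpoints of the $I_j$ tend to infinity. The remaining work is quantitative: I need the \emph{lengths} $|I_j|$ to grow at least linearly in their left endpoints $t_j$, so that $\limsup_{t\to\infty}\frac1t\log{\rm syst}(\gamma(t))<0$. Since the time to renormalize $\eta_j$ across a block $\phi_0^{kn_j}\circ\phi_1\circ\phi_0^{kn_j}$ is comparable to $\log$ of the Perron–Frobenius-type expansion factor of that block, which is of order $kn_j$ times a fixed constant, while the preceding total time $t_j$ is of order $\sum_{i<j}kn_i$, a choice such as $n_j=j$ (or, more safely, $n_{j+1}\geq 2(n_1+\dots+n_j)$) makes $|I_j|$ a fixed fraction of $t_j+|I_j|$; combined with $\delta(S,\epsilon_j)\to 0$ this yields the linear escape rate for the systole along the subsequence of times filling $\bigcup I_j$, and a routine interpolation (the systole changes at most at a bounded exponential rate along a Teichm\"uller geodesic) upgrades this to the $\limsup$ statement over all $t$.

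The main obstacle I anticipate is not the unique ergodicity (that is a black-box application of the contraction criterion for nested train-track cones) but the bookkeeping that simultaneously (i) keeps the shrinking intervals $I_j$ long compared to the preceding elapsed time and (ii) keeps $\epsilon_j\to 0$; these pull in opposite directions because making $c_1$ shorter requires larger $n_j$, hence a longer block, hence a larger $t_{j+1}$ relative to which the \emph{next} interval must again be long. I would resolve this by a diagonal choice of $n_j$ growing fast enough that both the ratio $|I_j|/t_j$ stays bounded below \emph{and} $\epsilon_j\to 0$; since $|I_j|$ itself grows linearly in $n_j$ while $\epsilon_j$ only needs $n_j\to\infty$, there is plenty of room. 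A secondary technical point is that the geodesic ray must be defined by a differential whose vertical lamination is exactly $\mu$ and whose horizontal lamination is genuinely carried by the relevant dual bigon tracks at every stage (so that each renormalization step actually occurs); this is guaranteed by choosing the horizontal data inside $\bigcap_j{\cal V}^*(\eta_j)$, which is non-empty by the analogue of Lemma~\ref{perron}/the Brouwer argument applied to the dual tracks, exactly as in the proof of Lemma~\ref{geometriccontrol}.
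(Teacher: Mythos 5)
Your proposal is correct and takes essentially the same route as the paper's proof. Both arguments take the building blocks $\phi_0^{k}\circ\phi_i\circ\phi_0^{k}$ from the proof of Theorem~\ref{existence}, concatenate them into an infinite splitting sequence with exponents growing geometrically (the paper writes $\psi_i=\phi_1\circ\phi_0^{k_i}\circ\phi_2\circ\phi_0^{k_i}$ with $k_i\geq 2\sum_{j\leq i-1}k_j$, which is your $n_{j+1}\geq 2(n_1+\cdots+n_j)$ up to renaming), extract unique ergodicity from the nested-cone contraction, and then use the intersection bounds from Lemma~\ref{geometriccontrol} together with Rafi's formula to control the $q$-length of the relevant clean vertex cycle at each time. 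The one ingredient the paper makes explicit that you leave implicit under ``the standard criterion'' is that the normalized carrying maps for a full period $\psi_{i+1}\circ\psi_i$ contract the projectivized cone ${\cal V}_0(\tau)$ by a factor bounded away from $1$ \emph{uniformly in $i$}, which is what turns the nesting into a single ray even though the $k_i$ (and hence the matrices) change from step to step. Your ``tension'' between the interval lengths $|I_j|$ and the decay of $\epsilon_j$ is not really a conflicting constraint, since the same geometric growth of the exponents that keeps the blocks long also forces $\epsilon_j\sim e^{-kn_j}$ to decay exponentially in the total elapsed time $t_j\lesssim n_j$, exactly as in the paper.
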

\begin{proof}
We argue as in the proof of Theorem \ref{existence}.
Namely, choose simple closed curves $c_1,c_2$ as
in the proof of Theorem \ref{existence} and fix
pseudo-Anosov elements $\phi_i$ of $S_i$ with the properties
stated in the proof.

Let $\tau$ be a large train track as in the proof of Theorem
\ref{existence}, with subtracks $\sigma_i$. 
Let $A_i=A(\phi_i,\sigma_i)$ and denote by
$\Vert A_i\Vert$ the operator norm of $A$.
Choose a sequence of numbers 
$(k_i)$ such that for
each $i$, \[k_i\geq 2 \sum_{j\leq i-1}k_j.\]
Let $\psi_i=\phi_1\circ \phi_0^{k_i}\circ \phi_2\circ \phi_0^{k_i}$.
Then for each $i$ we have $\psi_i\tau\prec \tau$. 
Write $\zeta_k=\psi_k\circ\dots\circ\psi_1$. We claim that
$\cap_k\zeta_k{\cal V}(\tau)$ consists of a single
ray. 

To see that this is the case, note 
from the proof of Theorem \ref{existence} 
that for each
$i$ a carrying map $(\psi_{i+1}\circ\psi_i)\tau\to \tau$
maps every branch of $(\psi_{i+1}\circ\psi_i)\tau$
\emph{onto} $\tau$ and its normalization 
contracts distances in the cone ${\cal V}_0(\tau)$
with a factor which
is independent of $i$. This implies immediately that
$\cap_k\zeta_k{\cal V}(\tau)$ consists of a single ray.
In particular, a point on this ray is a uniquely ergodic measured geodesic
lamination which fills up $S$.

To show linear escape in moduli space, let $\lambda\in 
{\cal V}_0(\tau)$ be the normalized measured geodesic lamination
contained in this ray and let $\nu$ be a measured geodesic lamination
which fills and hits $\tau$ efficiently. Then the pair
$(\lambda,\nu)$ determines a quadratic differential $q$. 
Let $\ell>0$ and
let $a>0$ be such that the
measure $e^{a/2}\lambda$ on $\phi_0^{k_\ell}\psi_{\ell -1}(\tau)$ 
is normalized. Then the arguments in the
proof of Theorem \ref{existence} show that
the intersection of the curve 
$\phi^{k_\ell}\psi^{\ell-1}c_1$ with the lamination
$e^{a/2}\lambda$ 
is at most $ce^{-a/2}$, and similarly for the intersection with 
$e^{-a/2}\nu$. This yields the theorem.
\end{proof}

\begin{remark}
By the main result of \cite{CE07}, there is a number $\epsilon >0$
so that if a Teichm\"uller geodesic in moduli space escapes
into the cusp with a speed of at most $\epsilon \log t$, then
the vertical measured geodesic lamination of a differential on the
geodesic is uniquely ergodic. The above example implies that
one can construct differentials with uniquely ergodic
vertical measured laminations and arbitrarily prescribed 
excursions into the cusp.
\end{remark}

\bigskip

\noindent
MATHEMATISCHES INSTITUT DER UNIVERSIT\"AT BONN\\
Endenicher Allee 60,\\ 
D-53115 BONN, GERMANY

\bigskip\noindent
e-mail: ursula@math.uni-bonn.de
\end{document}